\documentclass[a4paper,11pt]{article}
\usepackage[latin1]{inputenc}
\usepackage[english]{babel}
\usepackage{amsmath}
\usepackage{amsfonts}
\usepackage{amssymb}
\usepackage{epsfig}
\usepackage{amsopn}
\usepackage{amsthm}
\usepackage{color}
\usepackage{graphicx}
\usepackage{subfigure}
\usepackage{enumerate}
\usepackage{subcaption}
\newtheorem{theorem}{Theorem}[section]
\newtheorem{corollary}[theorem]{Corollary}
\newtheorem{lemma}[theorem]{Lemma}
\newtheorem{proposition}[theorem]{Proposition}
\newtheorem{definition}[theorem]{Definition}

\newtheorem*{theorem*}{Theorem}
\newtheorem*{lemma*}{Lemma}
\newtheorem*{remark*}{Remark}
\newtheorem*{definition*}{Definition}
\newtheorem*{proposition*}{Proposition}
\newtheorem*{corollary*}{Corollary}
\numberwithin{equation}{section}
\newcommand{\real}{\mathbb{R}}

\def\qed{\,\unskip\kern 6pt \penalty 500
\raise -2pt\hbox{\vrule \vbox to8pt{\hrule width 6pt
\vfill\hrule}\vrule}\par}
\definecolor{darkblue}{rgb}{0.05, .05, .65}
\definecolor{darkgreen}{rgb}{0.1, .65, .1}
\definecolor{darkred}{rgb}{0.8,0,0}
\newcommand{\beqn}{\begin{equation}}
\newcommand{\eeqn}{\end{equation}}
\newcommand{\bear}{\begin{eqnarray}}
\newcommand{\eear}{\end{eqnarray}}
\newcommand{\bean}{\begin{eqnarray*}}
\newcommand{\eean}{\end{eqnarray*}}
\begin{document}

\title{\huge \bf Large time behavior and transition from vanishing to spreading regimes for the generalized Burgers-Fisher-KPP equation}

\author{
\Large Razvan Gabriel Iagar\,\footnote{Departamento de Matem\'{a}tica
Aplicada, Ciencia e Ingenieria de los Materiales y Tecnologia
Electr\'onica, Universidad Rey Juan Carlos, M\'{o}stoles,
28933, Madrid, Spain, \textit{e-mail:} razvan.iagar@urjc.es},\\
[4pt] \Large Ariel S\'{a}nchez,\footnote{Departamento de Matem\'{a}tica
Aplicada, Ciencia e Ingenieria de los Materiales y Tecnologia
Electr\'onica, Universidad Rey Juan Carlos, M\'{o}stoles,
28933, Madrid, Spain, \textit{e-mail:} ariel.sanchez@urjc.es}\\
[4pt] }
\date{}
\maketitle

\begin{abstract}
The large time behavior of solutions to the following generalized Burgers-Fisher-KPP equation
$$
\partial_tu=u_{xx}+k(u^n)_x+u^p-u^q, \quad (x,t)\in\real\times(0,\infty),
$$
with $n\geq2$, $p>q\geq1$ and $k\in\real$, is considered in this work. Denoting by $H(x,t)$, respectively $\widetilde{H}(x,t)$ the solutions having as initial condition the Heaviside, respectively the ``anti-Heaviside" functions
$$
H_0(x)=\begin{cases}
  0, & \mbox{if } x<0 \\
  1, & \mbox{if } x\geq0.
\end{cases}, \quad \widetilde{H}_0(x)=1-H_0(x),
$$
critical velocities $\overline{c}$, respectively $\widetilde{c}=kn+2\sqrt{p-q}$, are identified such that $H(x,t)$, respectively $\widetilde{H}(x,t)$ approach the unique traveling wave solution of the equation with these critical velocities as $t\to\infty$. The critical velocity $\overline{c}$ is \emph{anomalous}, that is, it cannot be made explicit by an algebraic expression. Assuming for simplicity $k>0$, a remarkable fact is that, while $\widetilde{H}(x,t)\to0$ as $t\to\infty$ uniformly on compact subsets of $\real$, the Heaviside solution $H$ might tend either to zero or to one as $t\to\infty$, depending on the sign of the critical velocity $\overline{c}$. This sign vary with respect to the exponents $n$, $p$, $q$ and the coefficient $k$ and, in fact, we prove that given $p$, $q$, $n$, there exists a critical coefficient $k^*(n,p,q)$ such that $\overline{c}>0$ if $k>k^*(n,p,q)$ and $\overline{c}<0$ if $k<k^*(n,p,q)$. The convergence to either zero or one reflects the sharp influence of the convection term, since in the absence of it (that is, $k=0$), $H(x,t)$ would always tend to zero as $t\to\infty$. The results include more general initial conditions than the Heaviside-type functions, and sharp estimates of the threshold coefficient $k^*(n,p,q)$ are also given.
\end{abstract}

\

\noindent {\bf Mathematics Subject Classification 2020:} 35A18, 35B33, 35B40, 35C07, 35K58, 35Q51.

\smallskip

\noindent {\bf Keywords and phrases:} generalized Burgers-Fisher-KPP equation, Heaviside function, large time behavior, convergence to traveling waves, critical speed.

\section{Introduction}

The Fisher-KPP equation, written in a more general form as
\begin{equation}\label{FKPP}
\partial_tu=u_{xx}+u^p-u^q,
\end{equation}
has been introduced in the papers by Fisher \cite{Fi37} (for $p=1$ and $q=2$) and Kolmogorov, Petrovsky and Piscounoff \cite{KPP37} (for a more general reaction $f(u)$ having as model $u-u^q$, $q>1$) following from modeling in population dynamics and became in recent times one of the most established equations in applied mathematics. The competition between the diffusion, reaction and absorption terms is well understood nowadays whenever $p<q$, that is, in the classical range of exponents investigated originally by the authors of \cite{KPP37}. Many extensions to quasilinear diffusion, or to more general reaction-absorption terms written in the form of a general function $f(u)$ with suitable properties have been considered, including also extensions to the $N$-dimensional case. Thus, a wide amount of literature establishing qualitative properties and the large time behavior of the solutions to equations that are a variation of \eqref{FKPP} is nowadays available, see for example \cite{AW75, Biro02, B83, DGQ20, DQZ20, G20, GK, Sat76, SGM94}.

One of the main features of Eq. \eqref{FKPP} is the convergence of general solutions of it towards a special class of solutions known as \emph{traveling waves}. These solutions have the particular form
\begin{equation}\label{TWS}
u(x,t)=f(x+ct), \quad c\in\real, \quad (x,t)\in\real\times(0,\infty),
\end{equation}
where $c$ is the velocity and $f$ the profile of the wave. A celebrated result (see \cite{Fi37, KPP37}) is the fact that there exists a minimal velocity $c^*=2$ such that traveling wave solutions in the form \eqref{TWS} to Eq. \eqref{FKPP} with $0\leq u\leq1$ exist for $|c|\geq c^*$ and do not exist for $|c|<c^*$. A further important property of the traveling wave solutions is that they are asymptotic limits as $t\to\infty$ of general solutions stemming from initial conditions $u_0$ satisfying $0\leq u_0(x)\leq1$ for any $x\in\real$ and $u_0(x)\to0$ as $x\to-\infty$, $u_0(x)\to1$ as $x\to\infty$ or viceversa. To quote but a few papers where such large time behavior is established for Eq. \eqref{FKPP} or more general versions of it, we refer the reader to \cite{KPP37, Biro02, U78, DGQ20, G20} and references therein. In particular, denoting by
\begin{equation}\label{Heaviside}
H_0(x)=\begin{cases}
       0, & \mbox{if } x<0 \\
       1, & \mbox{if } x\geq0,
     \end{cases}
\end{equation}
the Heaviside function and considering the solution $H(x,t)$ to the Cauchy problem associated to Eq. \eqref{FKPP} with $H_0(x)$ as initial condition, it has been proved in \cite{KPP37} that it spreads (that is, tends to the constant solution equal to 1) with exactly the minimal velocity $c^*=2$ and approaches as $t\to\infty$ the (unique, modulo translation) traveling wave with velocity $c^*=2$.

The effect of a convection term inserted into a Fisher-KPP equation has been studied in more recent works. The existence of a minimal velocity for the traveling wave solutions is established in \cite{GK05, MM02, MM03, MaOu21, PPS25}, and very recent papers provide a rather complete study of the large time behavior of more general solutions towards traveling wave solutions, see \cite{Xu24, AHR25} and references therein. In particular, \cite{Xu24} adapts the technique in \cite{Biro02} to include a convection term and prove convergence of a rather general class of solutions to the Burgers-Fisher-KPP equation with degenerate diffusion towards the traveling wave solution with minimal velocity. 

In this paper, we consider the generalized Burgers-Fisher-KPP equation
\begin{equation}\label{eq1}
u_t=u_{xx}+k(u^n)_x+u^p-u^q, \quad (x,t)\in\real\times(0,\infty),
\end{equation}
in the range of exponents and coefficient
\begin{equation}\label{range.exp}
n\geq2, \quad p>q\geq1, \quad k>0.
\end{equation}
In fact, our results cover Eq. \eqref{eq1} with any $k\in\real$. Indeed, considering negative convection coefficients $k<0$ reduces to Eq. \eqref{eq1} after performing the simple transformation $x=-y\in\real$ and the reader can easily translate our results through this transformation. Thus, we will assume for simplicity that $k>0$ in the proofs.

Actually, Eq. \eqref{eq1} is a model that can be straightforwardly generalized, since our study can be readily extended to the more general equation
\begin{equation}\label{eq1.gen}
\partial_tv=Av_{xx}+B(v^n)_x+Cv^p-Dv^q,
\end{equation}
posed for $(x,t)\in\real\times(0,\infty)$, with arbitrary positive coefficients $A$, $C$, $D>0$ and arbitrary $B\in\real$. Indeed, the following rescaling
$$
v(x,t)=\lambda u(\mu x,\nu t), \quad \lambda=\left(\frac{D}{C}\right)^{1/(p-q)}, \quad \nu=C^{(1-q)/(p-q)}D^{(p-1)/(p-q)}, \quad \mu=\sqrt{\frac{\nu}{A}}
$$
maps Eq. \eqref{eq1.gen} into Eq. \eqref{eq1}, taking
$$
k=\frac{B\lambda^{n-1}\mu}{\nu}=\frac{B}{\sqrt{A}}C^{(q+1-2n)/2(p-q)}D^{-(p+1-2n)/2(p-q)}.
$$

We observe that Eq. \eqref{eq1} features the opposite order between the exponents of the reaction and absorption terms with respect to the
classical Fisher-KPP equation. The range $p>q$ can be physically interpreted as a process in which the growth mechanism dominates the absorption at high densities. This can be for example the case in combustion processes where the heat production dominates the dissipative effects or in chemical reactions in which the production rate accelerates with the increase in concentration (a phenomenon occurring in chain reactions). A more general and detailed motivation for general equations including Eq. \eqref{eq1} and their traveling waves can be found in the monograph \cite{VVV}.

The non-convective counterpart of Eq. \eqref{eq1} with $k=0$ arises (together with close relatives) in models mixing growth and diffusion, see for example \cite{Ba94, Ma10}, and traveling wave solutions have been analyzed in \cite{SHB05, HBS14}. Moreover, in papers such as \cite{HBS14, IS25}, it is shown that either the traveling wave solutions (in the range $p>1>q$) or a stationary solution (in the range $p>q\geq1$) play the role of threshold solutions separating between the vanishing regime (satisfied by general solutions lying below them) and the blow-up regime (satisfied by general solutions lying above them). A number of recent papers \cite{LH24, PS25, Zhang21, Zhang22} (see also the references therein) study the existence of isolated periodic traveling waves, having as common point a technique based on an abelian integral related to an alternative formulation of Eq. \eqref{eq1} as a dynamical system seen as a small perturbation of a Hamiltonian system.

As we shall analyze in this paper, one remarkable effect of the presence of a convection term is that at the same time some solutions whose initial conditions satisfy $u_0(x)\to0$ as $x\to-\infty$ and $u_0(x)\to1$ as $x\to\infty$ might converge to the constant solution $u\equiv1$ locally uniformly (a phenomenon usually referred in literature as \emph{spreading}) and other solutions stemming from initial conditions with similar properties might converge to the constant solution $u\equiv0$ (a phenomenon known as \emph{vanishing}). Thus, the convection creates a situation in which none of the two possible outcomes is universally dominant. Instead, Eq. \eqref{eq1} exhibits a selection mechanism: depending on specific features of the initial condition, the solution may converge to one of the two equilibria.

In their previous paper \cite{IS26}, the authors classified the traveling wave solutions in the form \eqref{TWS} to Eq. \eqref{eq1} in the range of exponents \eqref{range.exp}. This classification is performed on an autonomous dynamical system, and represents the starting point of the present work. We thus apply in this paper the properties of the traveling waves deduced in \cite{IS26} in order to establish the behavior as $t\to\infty$ of suitable classes of general solutions, including the Heaviside and ``anti-Heaviside" solutions, that are maximal in the sense of the velocity of spreading/vanishing. We show that the Heaviside solution might either vanish or spread according to the sign of a \emph{critical velocity} that is \emph{anomalous} (in the sense that its value is not obtained as an algebraic expression of the parameters of the problem, but it is deduced from a dynamical system, according to the terminology in \cite{VazSmooth}). Moreover, the transition from a vanishing to a spreading regime for this solution, given the exponents $n$, $p$, $q$, only depends on the coefficient $k$ of the convective term in Eq. \eqref{eq1}, a fact that is a very unexpected effect of the convection.

Before introducing the main results, we recall in the next section some preliminary facts proved in \cite{IS26} about the traveling wave solutions to Eq. \eqref{eq1}.

\section{Preliminary facts}

We recall in this section some specific results related to the classification of traveling waves to Eq. \eqref{eq1}, following \cite{IS26}. In order to study them, an alternative formulation of Eq. \eqref{eq1} in the form of an autonomous dynamical system has been employed. Inserting the ansatz \eqref{TWS} into Eq. \eqref{eq1}, we deduce that the profiles $f$ of traveling waves solve the differential equation
\begin{equation}\label{TWODE}
f''(\xi)=cf'(\xi)-knf^{n-1}(\xi)f'(\xi)-f^p(\xi)+f^q(\xi).
\end{equation}
By setting
\begin{equation}\label{PSchange}
X(\xi)=f(\xi)>0, \quad Y(\xi)=f'(\xi)\in\real, \quad \xi\in\real,
\end{equation}
Eq. \eqref{TWODE} is transformed into the following autonomous first order system
\begin{equation}\label{PSsyst}
\left\{\begin{array}{ll}X'=Y, \\Y'=cY-knX^{n-1}Y-X^p+X^q.\end{array}\right.
\end{equation}
The system \eqref{PSsyst} has two critical points $P_1=(0,0)$ and $P_2=(1,0)$. We recall the local behavior of the system \eqref{PSsyst} in the following statements.
\begin{proposition}[Propositions 2.1 in \cite{IS26}]\label{prop.P1}
Let $n$, $p$, $q$, $k$ as in \eqref{range.exp} such that $q>1$ and $c\neq0$. Then the critical point $P_1$ is non-hyperbolic and it has a unique trajectory going out of it into the cone $X>0, Y>0$ of the phase plane associated to the system \eqref{PSsyst} and a unique trajectory connecting to it from the region $X>0, Y<0$ of the phase plane. The local behavior of the profiles corresponding to these trajectory is given by:

(a) If $c>0$, then the unique trajectory contained in the unstable manifold of $P_1$ contains a one-parameter family of profiles such that
\begin{equation*}
\lim\limits_{\xi\to-\infty}e^{-c\xi}f(\xi)=L\in(0,\infty),
\end{equation*}
while the unique trajectory contained in the center manifold corresponds to profiles such that
\begin{equation*}
\lim\limits_{\xi\to\infty}\xi^{1/(q-1)}f(\xi)=\left(\frac{c}{q-1}\right)^{1/(q-1)}.
\end{equation*}

(b) If $c<0$, then the unique trajectory contained in the stable manifold of $P_1$ contains a one-parameter family of profiles satisfying
\begin{equation*}
\lim\limits_{\xi\to\infty}e^{-c\xi}f(\xi)=L\in(0,\infty),
\end{equation*}
while the unique trajectory contained in the center manifold corresponds to profiles satisfying
\begin{equation*}
\lim\limits_{\xi\to-\infty}|\xi|^{1/(q-1)}f(\xi)=\left(\frac{|c|}{q-1}\right)^{1/(q-1)}.
\end{equation*}
\end{proposition}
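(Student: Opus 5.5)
The plan is a local analysis of the system \eqref{PSsyst} near $P_1=(0,0)$. The linearization there has matrix
$$
\begin{pmatrix} 0 & 1\\ 0 & c\end{pmatrix}.
$$
Because $q>1$, the terms $-knX^{n-1}Y$, $-X^p$ and $X^q$ are all of higher order. The eigenvalues are therefore $\lambda_1=0$, with eigenvector $(1,0)$, and $\lambda_2=c\neq0$, with eigenvector $(1,c)$. So $P_1$ is non-hyperbolic, with a one-dimensional center manifold tangent to the $X$-axis and a one-dimensional hyperbolic manifold tangent to $(1,c)$. That hyperbolic manifold is unstable if $c>0$ and stable if $c<0$. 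By the standard invariant manifold theorem it is unique and analytic-type smooth. Exactly one branch of it enters the half-plane $X>0$: when $c>0$ it goes into $Y>0$, and when $c<0$ it comes from $X>0$, $Y<0$.

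The next step is the asymptotics along the hyperbolic manifold. There $Y\sim cX$, so $f'\sim cf$, which gives $f(\xi)\sim Le^{c\xi}$ as $\xi\to-\infty$ (for $c>0$) or as $\xi\to+\infty$ (for $c<0$). Two points need care.

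\begin{itemize}
\item The correct normalization is $e^{-c\xi}f\to L$. To get it, I write $(\ln f)'=Y/X=c+O(X^{\min(n-1,q-1)})$ and integrate. The error is integrable because $X$ decays exponentially along the trajectory.
\item The arbitrary $L\in(0,\infty)$ comes from the translation invariance $\xi\mapsto\xi+\xi_0$ of the autonomous system. This is why a single trajectory carries a one-parameter family of profiles.
\end{itemize}

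For the center manifold, I look for it as a graph $Y=h(X)=aX^q+o(X^q)$. Substituting into the invariance equation $h'(X)h(X)=ch(X)-knX^{n-1}h(X)-X^p+X^q$ and matching the lowest order gives $ca+1=0$, hence
$$
h(X)=-\frac{X^q}{c}+o(X^q).
$$
The reduced flow on the center manifold is therefore $X'=-X^q/c\,(1+o(1))$. Two things follow.

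\begin{itemize}
\item The side $X>0$ of the center manifold has $Y<0$ whenever $c>0$. For $c>0$ the center flow on $X>0$ goes toward the origin, so the hyperbolic direction is unstable and the center branch is the unique trajectory entering $P_1$ from $X>0$, $Y<0$. For $c<0$, $Y>0$ on the center branch and the flow leaves the origin, so the center branch is the unique outgoing trajectory with $X>0$, $Y>0$.
\item Integrating $X^{-q}X'\sim-1/c$ gives $X^{1-q}\sim\frac{(q-1)}{c}\xi$. This yields $\xi^{1/(q-1)}f(\xi)\to(c/(q-1))^{1/(q-1)}$ as $\xi\to\infty$ when $c>0$, and the symmetric statement with $|\xi|$ and $|c|$ as $\xi\to-\infty$ when $c<0$.
\end{itemize}

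The main obstacle is uniqueness of the trajectory along the center direction, since center manifolds are in general not unique. For $c>0$, take the region where the center dynamics is attracting: the trajectories converging to $P_1$ are made up of the stable-type center-manifold behavior plus the unstable direction, and the unstable direction cannot be used to approach $P_1$ in forward time. For this I would use a Shoshitaishvili / topological equivalence reduction: the system is locally conjugate to $X'=-X^q/c$, $Z'=cZ$. In this normal form exactly one orbit with $X>0$ tends to the origin as $\xi\to+\infty$, namely $Z=0$. For $c<0$ I would reverse time and apply the same argument. Because the center flow is monotone on each side, any center manifold gives the same orbit on that side, which settles uniqueness.
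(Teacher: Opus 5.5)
Your proposal is correct, and it is the standard local analysis at $P_1$: eigenvalues $0$ and $c$, the strong (un)stable manifold along $(1,c)$, the center manifold $Y=-X^q/c+o(X^q)$ with reduced flow $X'\sim -X^q/c$, and then integration of $(\ln f)'=Y/X$ and of $(X^{1-q})'$. The paper does not reprove this result but quotes it from \cite{IS26}, so your route is the natural one behind it, with two small points to tighten: for $q<2$ the field (extended to $X<0$, e.g.\ via $|X|^q$) is only $C^1$ at the origin, so drop ``analytic-type smooth'' and either cite the reduction/conjugacy step in a version valid for $C^1$ fields or replace it by a direct planar argument; and uniqueness of the center branch in $X>0$ comes from the opposite stability of the center and hyperbolic directions there (a saddle sector, which is exactly what your normal form $X'=-X^q/c$, $Z'=cZ$ shows), not from monotonicity of the center flow alone, since the flow is also monotone in node sectors where center manifolds are not unique.
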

If $c=0$, the local behavior is rather different and it depends on some particular relations between $n$, $p$ and $q$. Since the case $c=0$ will be very important as borderline case, we give below the local behavior of the profiles connecting to $P_1$ with $c=0$.
\begin{proposition}[Proposition 2.2 in \cite{IS26}]\label{prop.P1c0}
Let $n$, $p$, $q$, $k$ as in \eqref{range.exp}. When $c=0$, both the stable and unstable trajectories of $P_1$ are tangent to the $X$-axis in the phase plane. The traveling wave profiles corresponding to these trajectories have the following local behavior:
\begin{enumerate}
  \item If $n>(q+1)/2$, then the unstable trajectory, respectively the stable trajectory, correspond to
\begin{equation*}
\begin{split}
&\lim\limits_{\xi\to-\infty}|\xi|^{2/(q-1)}f(\xi)=\left[\frac{q-1}{2}\sqrt{\frac{2}{q+1}}\right]^{-2/(q-1)}, \quad {\rm respectively}, \\
&\lim\limits_{\xi\to\infty}\xi^{2/(q-1)}f(\xi)=\left[\frac{q-1}{2}\sqrt{\frac{2}{q+1}}\right]^{-2/(q-1)}.
\end{split}
\end{equation*}
  \item If $n=(q+1)/2$, let us introduce
\begin{equation*}
v_1:=\frac{\sqrt{k^2n^2+4n}-kn}{2n}, \quad v_2:=\frac{-\sqrt{k^2n^2+4n}-kn}{2n}.
\end{equation*}
Then the unstable trajectory, respectively the stable trajectory, correspond to
\begin{equation*}
\begin{split}
&\lim\limits_{\xi\to-\infty}|\xi|^{1/(n-1)}f(\xi)=[v_1(n-1)]^{-1/(n-1)}, \quad {\rm respectively},\\
&\lim\limits_{\xi\to\infty}\xi^{1/(n-1)}f(\xi)=[-v_2(n-1)]^{-1/(n-1)}.
\end{split}
\end{equation*}
  \item If $2\leq n<(q+1)/2$, then the unstable trajectory, respectively the stable trajectory, correspond to
\begin{equation*}
\begin{split}
&\lim\limits_{\xi\to-\infty}|\xi|^{1/(q-n)}f(\xi)=\left[\frac{q-n}{kn}\right]^{-1/(q-n)}, \quad {\rm respectively},\\
&\lim\limits_{\xi\to\infty}\xi^{1/(n-1)}f(\xi)=\left[k(n-1)\right]^{-1/(n-1)}.
\end{split}
\end{equation*}
\end{enumerate}
\end{proposition}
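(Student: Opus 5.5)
The case $c=0$ reduces to a local analysis of the degenerate critical point $P_1=(0,0)$ of the system \eqref{PSsyst},
$$
X'=Y,\qquad Y'=-knX^{n-1}Y-X^p+X^q .
$$
The linearization at $P_1$ is nilpotent, since $q>1$ means the term $X^q$ is at least quadratic. I would therefore use a quasi-homogeneous (weighted) blow-up adapted to the competing terms $Y'$, $X^q$ and $X^{n-1}Y$.

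A first integral-type computation suggests the scaling. Neglecting the convection term, the energy $Y^2/2\approx X^{q+1}/(q+1)$ gives $Y\sim \pm\sqrt{2/(q+1)}\,X^{(q+1)/2}$. This is tangent to the $X$-axis, which already explains the first claim. The convection term $knX^{n-1}Y$ is of order $X^{n-1+(q+1)/2}$, while $Y'\sim YY_X/Y\cdot\ldots$, more precisely $YdY/dX\sim X^q$. So the comparison is between $X^{n-1}Y$ and $X^q$: the convection is negligible exactly when $n-1+(q+1)/2>q$, i.e. $n>(q+1)/2$. This gives the three cases.

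Concretely, I would write the trajectories as graphs $Y=Y(X)$ near $X=0$, where they satisfy
$$
Y\frac{dY}{dX}=-knX^{n-1}Y+X^q-X^p .
$$
I would then seek $Y\sim vX^{\alpha}$.

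\textbf{Case $n>(q+1)/2$.} Take $\alpha=(q+1)/2$ and $v^2=2/(q+1)$, with $v>0$ for the unstable branch and $v<0$ for the stable branch. Integrating $f'=\pm\sqrt{2/(q+1)}f^{(q+1)/2}$ gives the stated $|\xi|^{-2/(q-1)}$ decay with the stated constant.

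\textbf{Case $n=(q+1)/2$.} Here all three terms balance with $\alpha=n$, since $2n-1=q$. This gives $nv^2=-knv+1$, whose roots are $v_{1,2}$. Integrating $f'=vf^n$ yields the $|\xi|^{-1/(n-1)}$ rates, using $v_1$ for $\xi\to-\infty$ and $v_2<0$ for $\xi\to\infty$.

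\textbf{Case $n<(q+1)/2$.} The two branches balance differently:
\begin{itemize}
\item Unstable branch ($Y>0$): the balance $knX^{n-1}Y\approx X^q$ gives $Y\approx X^{q-n+1}/(kn)$, hence $f'=f^{q-n+1}/(kn)$ and $|\xi|^{-1/(q-n)}$ behavior.
\item Stable branch ($Y<0$): the balance $YdY/dX\approx -knX^{n-1}Y$ gives $Y\approx -kX^n$, hence $f'=-kf^n$ and $\xi^{-1/(n-1)}$ behavior.
\end{itemize}
In each case the constants follow by direct integration of the separable ODE.

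\textbf{Rigor.} To make this rigorous, I would introduce the blow-up variable $V=Y/X^{\alpha}$ with the appropriate $\alpha$ and a new independent variable $d\eta=X^{\alpha-1}d\xi$ (a time reparametrization). This turns the degenerate point into critical points on the line $X=0$ of a regular system in $(X,V)$. Its hyperbolic points, or saddle-nodes handled by center manifold theory, have exactly one orbit entering $X>0$ for each sign of $V$. That gives the uniqueness of the two trajectories.

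\textbf{Main obstacle.} I expect the main difficulty to be the case $n<(q+1)/2$, where the two branches need different weights. The stable branch $Y\sim -kX^n$ corresponds to a center-manifold direction in the blown-up system, so I would first use the weight $\alpha=n$. In that chart the point $V=-k$ is hyperbolic, while $V=0$ is a saddle-node whose center manifold carries the unstable branch; I would analyze that center manifold to get the slope $1/(kn)$ at order $X^{q-2n+1}$.
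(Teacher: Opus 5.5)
Your proposal is correct and follows essentially the route the paper attributes to \cite{IS26}. For $n>(q+1)/2$, your weighted blow-up $V=YX^{-(q+1)/2}$, $d\eta=X^{(q-1)/2}\,d\xi$ is precisely the change leading to \eqref{PSsyst2}, where $U=X^{p-q}$ is used in place of $X$. In the other two regimes your weight choices also reproduce all the stated rates and constants: saddles at $v_{1,2}$ when $n=(q+1)/2$, and, in the weight-$n$ chart when $n<(q+1)/2$, the saddle at $V=-k$ together with the saddle-node at $V=0$.

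Two routine points still need to be written out:
\begin{enumerate}
\item As the paper remarks after \eqref{PSsyst2}, exponents such as $p-q$, $n-(q+1)/2$ or $q+1-2n$ may lie in $(0,1)$. You should therefore replace $X$ by a suitable power $X^{\gamma}$ to obtain a $C^1$ field before invoking hyperbolicity or center-manifold theory.
\item You need to exclude orbits of $P_1$ with $|V|\to\infty$. This can be done with a second chart, or with a priori bounds such as $Y^2\leq 2X^{q+1}/(q+1)$ on the unstable branch.
\end{enumerate}
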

Finally, in the limiting case $q=1$, and for any $c\in\real$, we introduce
$$
\lambda_1:=\frac{c+\sqrt{c^2+4}}{2}>0, \quad \lambda_2:=\frac{c-\sqrt{c^2+4}}{2}<0.
$$
Then we have
\begin{proposition}[Proposition 2.3 in \cite{IS26}]\label{prop.P1q1}
Let $n$, $p$, $q$, $k$ as in \eqref{range.exp}. For $q=1$, the critical point $P_1$ is a saddle point. The unique trajectory contained in its unstable manifold, respectively its stable manifold, corresponds to profiles such that:
\begin{equation}\label{beh.P1q1}
\lim\limits_{\xi\to-\infty}e^{-\lambda_1\xi}f(\xi)\in(0,\infty), \quad {\rm respectively} \quad
\lim\limits_{\xi\to\infty}e^{-\lambda_2\xi}f(\xi)\in(0,\infty).
\end{equation}
\end{proposition}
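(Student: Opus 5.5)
Since $q=1$, the nonlinear terms can be separated from the linear part at the origin. The system \eqref{PSsyst} becomes
$$
X'=Y, \qquad Y'=cY+X-knX^{n-1}Y-X^p .
$$
Because $n\geq2$ and $p>1$, the terms $knX^{n-1}Y$ and $X^p$ are of order at least two near $P_1=(0,0)$. The linearization at $P_1$ is therefore given by the matrix
$$
\begin{pmatrix}0&1\\ 1&c\end{pmatrix}.
$$
Its characteristic polynomial is $\lambda^2-c\lambda-1=0$, whose roots are exactly $\lambda_1$ and $\lambda_2$. Their product equals $-1$, so they are real with opposite signs for every $c\in\real$, and $P_1$ is a hyperbolic saddle. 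The eigenvector associated with $\lambda_i$ is $(1,\lambda_i)$. When $n=2$ the term $knXY$ is quadratic, so the right-hand side is $C^1$ near the origin in every case; when $p$ is not an integer, $X^p$ is still $C^1$ there because $p>1$. Hence the stable manifold theorem and the Hartman--Grobman theorem apply.

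The stable manifold theorem then gives one-dimensional unstable and stable manifolds, tangent at $P_1$ to the lines $Y=\lambda_1X$ and $Y=\lambda_2X$ respectively. Each manifold has two branches. The unstable branch entering $X>0$ has $Y>0$, since $\lambda_1>0$. The stable branch lying in $X>0$ has $Y<0$, since $\lambda_2<0$. The other two branches lie in $X<0$ and are discarded because profiles must satisfy $f=X>0$. This gives the uniqueness of the relevant trajectory in each manifold. Different profiles on the same trajectory differ only by a translation in $\xi$.

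For the precise asymptotics, take the unstable branch first. Along it $Y/X\to\lambda_1$ as $\xi\to-\infty$, so $(\ln f)'=Y/X=\lambda_1+o(1)$. This alone only yields $\ln f(\xi)\sim\lambda_1\xi$. To get a finite positive limit of $e^{-\lambda_1\xi}f(\xi)$ one needs a rate, which is the main technical point. Since the vector field is $C^1$ and in fact Hölder $C^{1,\alpha}$ near the origin, the unstable manifold is a $C^1$ graph $Y=\lambda_1X+h(X)$ with $h(X)=O(X^{1+\alpha})$. In practice $\alpha=\min\{1,p-1\}$, and for $n=2$ the cross term gives an $O(X^2)$ correction. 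Then
$$
(\ln f)'=\lambda_1+O(f^{\alpha}).
$$
Moreover $f$ decays exponentially, since $f\le Ce^{(\lambda_1-\epsilon)\xi}$ for $\xi$ near $-\infty$, so the error term $O(f^\alpha)$ is integrable on $(-\infty,\xi_0)$. Integrating gives
$$
\ln f(\xi)-\lambda_1\xi\to \text{const}\in\real \quad \text{as } \xi\to-\infty,
$$
which is the first limit in \eqref{beh.P1q1}. The stable branch is treated identically as $\xi\to+\infty$, with $\lambda_2$ in place of $\lambda_1$.

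An alternative route is to invoke the standard result that solutions on the stable or unstable manifold of a hyperbolic saddle with a $C^{1,\alpha}$ vector field satisfy $X(\xi)=Le^{\lambda_i\xi}(1+o(1))$ with $L>0$, for example via Hartman's $C^1$ linearization in the plane. Either way, the main obstacle is justifying this integrability of $(\ln f)'-\lambda_i$ rather than the saddle structure itself, which is immediate.
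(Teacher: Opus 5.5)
Your argument is correct and is essentially the standard linearization argument behind this result, which the paper quotes from its companion work without proof: the Jacobian at $P_1$ has eigenvalues $\lambda_1>0>\lambda_2$, and the stable manifold theorem leaves exactly one branch of each manifold in $X>0$. Integrating $f'/f=Y/X$ then gives the limits, and you rightly note that tangency alone only gives $\ln f\sim\lambda_i\xi$, so you supply the integrability of $Y/X-\lambda_i$.

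One harmless slip concerns your Hölder exponent. For $2<n<3$ the Jacobian term $kn(n-1)X^{n-2}Y$ is only $(n-2)$-H\"older near the origin, so $\alpha$ should be $\min\{1,p-1,n-2\}$ there; any $\alpha>0$ still suffices. You can also bypass manifold regularity altogether. The ratio $Z=Y/X$ solves $Z'=-(Z-\lambda_1)(Z-\lambda_2)-knX^{n-1}Z-X^{p-1}$. Variation of constants on each branch, using the monotonicity of $X$ near the relevant end, then gives directly $Y/X-\lambda_i=O(X^{\min\{n,p\}-1})$.
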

The uniqueness of the trajectory of the system \eqref{PSsyst} stemming from $P_1$ on its unstable or center manifold is a very important feature of Eq. \eqref{TWODE}. We denote in the sequel by $l_c$ this trajectory and by $f_c$ the corresponding profile of a traveling wave obtained by undoing the transformation \eqref{PSchange} for a given $c\in\real$.

The critical point $P_2$ changes its behavior according to the values of $c$. More precisely,
\begin{proposition}[Proposition 2.4 in \cite{IS26}]\label{prop.P2}
Let $n$, $p$, $q$, $k$ as in \eqref{range.exp}. The critical point $P_2$ is
\begin{itemize}
  \item An unstable node if $c\geq kn+2\sqrt{p-q}$.
  \item An unstable focus if $c\in(kn,kn+2\sqrt{p-q})$.
  \item A stable focus if $c\in(kn-2\sqrt{p-q},kn)$.
  \item A stable node if $c\leq kn-2\sqrt{p-q}$.
\end{itemize}
Moreover, if $n\neq p+q+1$, a Hopf bifurcation occurs at $c=kn$, generating at least one limit cycle in the system \eqref{PSsyst} either for $c<kn$ when $n<p+q+1$, or for $c>kn$ when $n>p+q+1$.
\end{proposition}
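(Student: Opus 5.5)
Linearizing the system \eqref{PSsyst} at $P_2=(1,0)$ settles the classification completely. Put $X=1+u$, $Y=v$ and expand to first order. Since $X^p-X^q=(p-q)u+O(u^2)$ and $knX^{n-1}Y=knv+O(|u||v|)$, the Jacobian matrix at $P_2$ is
$$
J=\begin{pmatrix} 0 & 1\\ -(p-q) & c-kn\end{pmatrix}.
$$
Its trace is $\operatorname{tr}J=c-kn$ and its determinant is $\det J=p-q>0$. Because the determinant is positive, $P_2$ is never a saddle, and its type is read off from the characteristic equation
$$
\lambda^2-(c-kn)\lambda+(p-q)=0.
$$

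The discriminant of this equation is $(c-kn)^2-4(p-q)$. It is nonnegative exactly when $|c-kn|\geq2\sqrt{p-q}$, and then both eigenvalues are real with the sign of $c-kn$. This gives an unstable node for $c\geq kn+2\sqrt{p-q}$ and a stable node for $c\leq kn-2\sqrt{p-q}$; the equality cases are degenerate nodes with a double real eigenvalue. For $0<|c-kn|<2\sqrt{p-q}$ the eigenvalues are complex with real part $(c-kn)/2$, which gives an unstable focus on $(kn,kn+2\sqrt{p-q})$ and a stable focus on $(kn-2\sqrt{p-q},kn)$. All of these cases are hyperbolic, so the Hartman--Grobman theorem transfers the classification to the nonlinear system; for the foci one can also use the standard fact that a hyperbolic focus remains a focus under $C^1$ perturbations.

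At $c=kn$ the eigenvalues are $\pm i\sqrt{p-q}$. Their real part has derivative $d\,\mathrm{Re}\,\lambda/dc=1/2\neq0$, so the transversality condition of the Hopf theorem holds. The step I expect to be the main obstacle is computing the first Lyapunov coefficient $\ell_1$ and showing that its sign is that of $n-(p+q+1)$, or its opposite depending on orientation conventions.

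For that computation, I would set $\omega=\sqrt{p-q}$ and Taylor-expand the nonlinearity to third order. With $u=X-1$, the second equation becomes
$$
Y'=(c-kn)Y-kn\big[(n-1)u+\tfrac{(n-1)(n-2)}{2}u^2\big]Y-\big[(p-q)u+\tfrac{p(p-1)-q(q-1)}{2}u^2+\tfrac{p(p-1)(p-2)-q(q-1)(q-2)}{6}u^3\big]+\dots
$$
Next I would rescale to normal form coordinates $x=u$, $y=-v/\omega$ and apply the Guckenheimer--Holmes formula for $\ell_1$, which combines third derivatives of $g$ with products of second derivatives of $f$ and $g$ divided by $\omega$. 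The quadratic terms involve $kn(n-1)uv$ and $\frac{(p-q)(p+q-1)}{2}u^2$. After simplification, $\ell_1$ should be proportional to $k\,n(n-1)$ times a factor proportional to $(n-(p+q+1))$, so it is nonzero exactly when $n\neq p+q+1$.

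A nonzero $\ell_1$ means the Hopf bifurcation is nondegenerate, so a unique small limit cycle is born near $P_2$. It appears on the side of $c=kn$ where $P_2$ has the stability opposite to that of the cycle. Combining this with the sign of $\ell_1$, the cycle appears for $c<kn$ when $n<p+q+1$ (supercritical relative to decreasing $c$) and for $c>kn$ when $n>p+q+1$. I would check the signs carefully against the case $k\to0$ as a sanity test.
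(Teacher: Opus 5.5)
The paper gives no proof of this proposition; it is quoted from \cite{IS26}. Your route is the standard one: linearize at $P_2$, then apply the Hopf theorem using the first Lyapunov coefficient. Everything you assert is correct. However, the one step you leave as a prediction, with its sign explicitly undecided, is exactly the content of the ``moreover'' part. As written, the assignment of sides ($c<kn$ versus $c>kn$) is asserted rather than proved.

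\textbf{Closing the sign computation.} Put $\omega=\sqrt{p-q}$, $a_2=\frac{(p-q)(p+q-1)}{2}$, $b_1=kn(n-1)$ and $b_2=\frac{kn(n-1)(n-2)}{2}$. At $c=kn$, in your coordinates $x=u$, $y=-v/\omega$, the system becomes $x'=-\omega y$, $y'=\omega x+g(x,y)$ with $f\equiv0$ and
$$
g(x,y)=\frac{a_2}{\omega}x^2-b_1xy-b_2x^2y+\cdots,
$$
where the omitted terms (a pure $x^3$ term and quartic terms) do not enter the formula. Since $g_{yy}=g_{yyy}=0$, the Guckenheimer--Holmes expression reduces to
$$
\ell_1=\frac{1}{16}\,g_{xxy}-\frac{1}{16\omega}\,g_{xy}\,g_{xx}=\frac18\Big(\frac{a_2b_1}{\omega^2}-b_2\Big)=\frac{kn(n-1)}{16}\,(p+q+1-n).
$$
To avoid worrying about orientation conventions, you can check this directly. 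The function $V=\frac12(\omega^2u^2+v^2)+\frac{a_2}{3}u^3-\frac{b_1}{3\omega^2}v^3$ removes the cubic part of $V'$. The quartic remainder, averaged over the linear orbit, is a positive multiple of $\frac{a_2b_1}{\omega^2}-b_2$. Hence, for $k>0$ and $n\geq2$, $P_2$ is a weak unstable focus at $c=kn$ if and only if $n<p+q+1$.

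\textbf{Reading off the side.} Since $\operatorname{Re}\lambda=(c-kn)/2$, the normal form is $r'=r\big(\frac{c-kn}{2}+\ell_1r^2+\cdots\big)$. Its small cycle $r^2\approx-(c-kn)/(2\ell_1)$ exists only when $(c-kn)\ell_1<0$. This gives $c<kn$ for $n<p+q+1$ and $c>kn$ for $n>p+q+1$, as stated.

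\textbf{Two corrections in the Hopf part.}
\begin{itemize}
\item In the case $n<p+q+1$ one has $\ell_1>0$, so the cycle is unstable and surrounds the stable focus; the bifurcation is \emph{subcritical}. Your label ``supercritical relative to decreasing $c$'' is wrong, because criticality is fixed by the sign of $\ell_1$, not by the direction in which $c$ moves. Your identification of the side is nevertheless correct.
\item The proposed $k\to0$ sanity check carries no information. Indeed $\ell_1=O(k)$, and for $k=0$, $c=0$ the system is Hamiltonian, so $P_2$ is a center.
\end{itemize}

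\textbf{A precision issue in the linear classification.} Hartman--Grobman gives only topological equivalence, under which nodes and foci are indistinguishable. The node/focus distinction is what the paper uses later: tangency to $e_\pm(c)$ in Lemma \ref{lem.sub}, and oscillation around $X=1$ in Theorem \ref{th.antiHeav}. For that you need the $C^2$ refinement: for a $C^2$ planar field, a hyperbolic node stays a node, including the improper node at $|c-kn|=2\sqrt{p-q}$. This applies here because the vector field of \eqref{PSsyst} is analytic near $X=1$. Your remark that a hyperbolic focus persists under $C^1$ perturbations is correct and covers the focus cases.
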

A pair of numbers related to the critical point $P_2$ that are very useful throughout the paper are the eigenvalues and the corresponding eigenvectors of its linearized matrix, namely
\begin{equation}\label{eigen.P2}
\begin{split}
&\lambda_+(c):=\frac{c-kn+\sqrt{(c-kn)^2-4(p-q)}}{2}, \quad e_+(c)=(1,\lambda_+(c)) \\ 
&\lambda_-(c):=\frac{c-kn-\sqrt{(c-kn)^2-4(p-q)}}{2}, \quad e_{-}(c)=(1,\lambda_{-}(c)).
\end{split}
\end{equation} 
The analysis performed in \cite{IS26} shows that, in striking contrast to the standard Fisher-KPP equation, Eq. \eqref{eq1} admits traveling waves with any velocity $c\in\real$. This fact makes it much more involved to determine a specific velocity of spreading or vanishing for a specific initial condition (or for a class of them), since apparently there is no limit value $c^*$ as for Eq. \eqref{FKPP}. A complete classification of the traveling waves with respect to the value of $c\in(-\infty,\infty)$ according to their behavior at both ends has been performed in \cite[Theorems 1.1 and 1.2]{IS26}. In the present paper, we will only employ some specific trajectories of the system \eqref{PSsyst} connecting the critical points $P_1$ and $P_2$, thus we refrain from recalling the full and rather lengthy classification. We are now in a position to state our main results.

\section{Main results}

Before stating our main results, let us introduce some notation that will be employed throughout the paper. One of the main goals of this paper is to describe the large time behavior of the solutions obtained from the Heaviside and the anti-Heaviside functions as initial conditions. Let thus $H(x,t)$ and $\widetilde{H}(x,t)$ be the solutions to Eq. \eqref{eq1} such that
\begin{equation*}
H(x,0)=H_0(x), \quad \widetilde{H}(x,0)=1-H_0(x),
\end{equation*}
where $H_0$ is the usual Heaviside function defined in \eqref{Heaviside}. Let us next recall from Proposition \ref{prop.P2} that the critical point $P_2$ in the system \eqref{PSsyst} becomes a stable node for $-\infty<c\leq kn-2\sqrt{p-q}$.
\begin{definition}\label{def.con}
We say that the trajectory $l_c$ \emph{connects directly} to $P_2$ if it ends in the stable node $P_2$ without crossing the $X$-axis; that is,
$$
X(\xi)>0, \quad Y(\xi)>0, \quad \xi\in(-\infty,\infty)
$$
on this trajectory.
\end{definition}
It has been proved in \cite[Proposition 6.1]{IS26} that the trajectory $l_c$ connects directly $P_1$ and $P_2$ at least for any $c\leq-2\sqrt{p-q}$. Let us thus define the \emph{critical velocity}
\begin{equation}\label{crit.vel}
\overline{c}:=\sup\{c\in\real: l_c \ {\rm connects \ directly} \ P_1 \ {\rm and} \ P_2\}.
\end{equation}
As we shall see in Lemma \ref{lem.crit}, $\overline{c}$ is the biggest velocity for which there is a trajectory connecting directly $P_1$ to $P_2$ in the sense of Definition \ref{def.con} and thus, the biggest velocity for which there is a traveling wave solution in the form \eqref{TWS} with $0\leq u(x,t)\leq1$, $(x,t)\in\real\times(0,\infty)$. In particular, the trajectory $l_{\overline{c}}$ will enter $P_2$ tangent to the eigenvector $e_{-}(\overline{c})$ defined in \eqref{eigen.P2}. Moreover, $\overline{c}$ can take any of the two signs (positive or negative). Let us also remark at this point that, when $\overline{c}>0$, there are at the same time traveling wave solutions $u$ in the form \eqref{TWS} with $0\leq u\leq1$ such that $u(x,t)\to1$ locally uniformly as $t\to\infty$ (with velocity $c\in(0,\overline{c})$) and such that $u(x,t)\to0$ locally uniformly as $t\to\infty$ (with velocity $c<0$). When $\overline{c}<0$, all the traveling wave solutions $u$ in the form \eqref{TWS} such that $0\leq u\leq1$ converge to zero as $t\to\infty$.

In this notation, we can state a rather striking convergence result for the Heaviside solution.
\begin{theorem}\label{th.Heav}
Let $k$, $n$, $p$, $q$ be as in \eqref{range.exp}. Then the Heaviside solution $H(x,t)$ evolves with the velocity $\overline{c}$. More precisely, we have (with locally uniform convergence)
\begin{equation}\label{convH1}
\lim\limits_{t\to\infty}H(x-ct,t)=\begin{cases}
                                    1, & \mbox{if } c<\overline{c} \\
                                    0, & \mbox{if } c>\overline{c},
                                  \end{cases}
\end{equation}
and the following convergence in form to the traveling wave with profile $f_{\overline{c}}$
\begin{equation}\label{convH2}
\lim\limits_{t\to\infty}\sup_{x\in\real}|H(x,t)-f_{\overline{c}}(x+\overline{c}t)|=0.
\end{equation}
\end{theorem}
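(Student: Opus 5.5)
Throughout, set $f:=f_{\overline{c}}$, the profile of the traveling wave associated to $l_{\overline{c}}$. By Lemma \ref{lem.crit}, $f$ is increasing, $0<f<1$, $f(-\infty)=0$, $f(\infty)=1$, and $l_{\overline{c}}$ enters $P_2$ tangent to $e_-(\overline{c})$.

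\textbf{Step 1: sandwich between translates of traveling waves.} The plan is to trap $H$ between translates of monotone waves and then pass to a Lyapunov/compactness argument in the moving frame $\xi=x+\overline{c}t$, in the spirit of the approach of \cite{Biro02, Xu24}.

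\emph{Step 1a (monotonicity).} First, $0\leq H\leq 1$ by the comparison principle. Since $H_0$ is nondecreasing and the equation is translation invariant, comparing $H(\cdot+h,t)$ with $H(\cdot,t)$ for $h>0$ shows that $H(\cdot,t)$ is nondecreasing in $x$ for every $t>0$.

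\emph{Step 1b (sub-solutions on the left).} For $c<\overline{c}$, possibly sharpening to $c$ slightly above the infimum of the connecting set near $\overline{c}$, I use that $l_c$ connects directly; this uses that the set of directly connecting velocities is an interval $(-\infty,\overline{c}]$, which I expect Lemma \ref{lem.crit} provides. A translate $f_c(x+ct-x_0)$ is then a traveling wave with values in $(0,1)$. Its left tail is exponential or algebraic (Propositions \ref{prop.P1}--\ref{prop.P1q1}), whereas $H(\cdot,1)$ has Gaussian-type left tail. So I must compare on a half-line instead, or first modify $f_c$ into a compactly supported-from-the-left subsolution. Concretely, take $\max\{f_c(\cdot)-\varepsilon e^{-\delta t},0\}$ type constructions. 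This gives $\liminf H(x-ct,t)\ge$ something close to $1$ for $c<\overline{c}$.

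\emph{Step 1c (super-solutions on the right).} For $c>\overline{c}$, I construct a supersolution from the trajectory $l_c$, which no longer connects directly. It crosses $Y=0$ at some $X^*<1$ or overshoots. A truncated profile, equal to $1$ beyond its first maximum, or the piece of the unstable trajectory of $P_2$ lying above, serves as a generalized supersolution above $H_0$ after translation. This yields the $0$ limit in \eqref{convH1}.

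\textbf{Step 2: convergence in form \eqref{convH2}.} Work with $w(\xi,t)=H(\xi-\overline{c}t,t)$, which solves
$$
w_t=w_{\xi\xi}+(kn w^{n-1}-\overline{c})w_\xi+w^p-w^q.
$$
Step 1 shows that $w$ stays, for large $t$, between $f(\xi-\xi_1)$ and $f(\xi-\xi_2)$ modulo errors decaying in time. This uses the exact wave $f$ itself as sub- and supersolution, combined with the exponential approach of the linearized dynamics near the stable node $P_2$, since $f'(\xi)>0$ and $w^p-w^q$ has derivative $p-q>0$ at $1$.

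From here there are two routes. The first is the Fife--McLeod squeezing technique: sub/super solutions $f(\xi\pm\xi_0\pm\sigma(1-e^{-\beta t}))\pm q_0e^{-\beta t}$. However, bistability fails at $0$ because the reaction is degenerate there when $q>1$. The alternative route is the zero-number (Sturm intersection) argument. The $\omega$-limit set of $w$ in $C_{loc}$ consists of entire solutions trapped between two translates of $f$. Since the number of intersections of $w(\cdot,t)$ with each $f(\cdot-a)$ is nonincreasing, any limit is itself a traveling wave with speed $\overline{c}$, hence a translate $f(\cdot-a)$. Monotonicity in the shift then gives uniqueness of $a$. Uniformity on $\real$ follows from the tail control given by the sandwich.

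\textbf{Main obstacle.} I expect the main obstacle to be Step 1 near the left end, where $H\to0$ and the reaction $-u^q$ (for $q>1$) is degenerate. The tails of $f_c$ for $c$ close to $\overline{c}$ are not comparable to the Gaussian tail of $H$, so the sub-solution construction must be localized. The anomalous nature of $\overline{c}$ means I also need continuity of $c\mapsto l_c$ in the phase plane (from \cite{IS26}) to get the sharp threshold. I would first try comparing on half-lines $\{\xi>\xi_0\}$ using the strict monotonicity of $H$ in $x$ at the boundary.
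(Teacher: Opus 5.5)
\textbf{Step 1b.} Your Step 1 has the same skeleton as the paper's: traveling-wave sub/supersolutions with speeds $c\neq\overline{c}$, then an auxiliary speed between $c$ and $\overline{c}$ to push the profile to its limit. However, the subsolution you propose in Step 1b does not work. For $u=f_c(x+ct)-\varepsilon e^{-\delta t}$, the profile equation \eqref{TWODE} gives
\[
u_t-u_{xx}-k(u^n)_x-(u^p-u^q)=\delta\varepsilon e^{-\delta t}+kn\left(f_c^{n-1}-u^{n-1}\right)f_c'+\left(f_c^p-f_c^q\right)-\left(u^p-u^q\right).
\]
Where $f_c$ is close to $1$, the right-hand side is strictly positive, because $\frac{d}{ds}(s^p-s^q)\big|_{s=1}=p-q>0$. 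The state $u\equiv1$ is unstable, so lowering a profile near $1$ produces a supersolution defect, not a subsolution. This instability at $1$, not the degeneracy at $0$, is also what rules out a Fife--McLeod ansatz. Comparing on moving half-lines is circular, since it needs a lower bound for $H$ along $x=\xi_0-ct$, which is what you are trying to prove.

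The missing idea is the phase-plane construction of Lemma \ref{lem.sub}. For $c<\overline{c}$ one has $c\in\mathcal{A}$. The trajectory entering $P_2$ along $e_{-}(c)$ lies above $l_c$ and, followed backwards, must leave the strip $0<X<1$ through the $Y$-axis at some $(0,Y_0)$ with $Y_0>0$. Its profile vanishes with positive slope, so its extension by zero is a weak subsolution with values in $[0,1)$ that is identically $0$ on a left half-line. It can be placed below $H_0$ directly, with no tail comparison.

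\textbf{Step 1c.} The trajectory $l_c$ cannot reach $Y=0$ at some $X^*<1$, since there $Y'=X^q-X^p>0$. The truncation must be $\min\{f_c,1\}$, switching to $1$ at the first point where $f_c=1$ (Lemma \ref{lem.super}). Switching to $1$ after the maximum creates a downward jump.

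\textbf{Step 2.} This is the more serious gap. Everything there rests on $w(\xi,t)=H(\xi-\overline{c}t,t)$ being eventually trapped between $f(\xi-\xi_1)$ and $f(\xi-\xi_2)$, and this does not follow from Step 1. Step 1 only locates the front up to $o(t)$, because it uses speeds different from $\overline{c}$. At $c=\overline{c}$ the trajectory tangent to $e_{-}(\overline{c})$ is $l_{\overline{c}}$ itself, so the construction of Lemma \ref{lem.sub} yields no subsolution vanishing on a left half-line. Your appeal to ``exponential approach near the stable node $P_2$'' confuses stability in the profile ODE with stability for the PDE, where $u\equiv1$ is linearly unstable.

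Without the sandwich, nothing excludes a sublinear drift of the front relative to $x=-\overline{c}t$; the logarithmic delay of pulled KPP fronts shows this can happen. In that case the $\omega$-limit in the $\overline{c}$-frame is trivial, and the zero-number argument gives nothing.

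The paper avoids any a priori trapping. Following \cite{KPP37}, it recentres along a level set, $H_{t_0}(x,t)=H(x+\varphi(t_0),t+t_0)$. It then uses the monotonicity of Lemma \ref{lem.monot} and the fact that the difference of two such translates solves a linear parabolic equation with bounded coefficients. This gives a Cauchy property in $t_0$ and hence a limit $\overline{H}$. Only afterwards is $\overline{H}$ identified as the traveling wave, with its speed fixed through \eqref{convH1}. To keep your route you would need a genuine trapping argument for the critical front, which is exactly the hard part and is absent.
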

Let us first observe that $\overline{c}\in[-2\sqrt{p-q},kn-2\sqrt{p-q}]$. Indeed, the lower bound is ensured by \cite[Proposition 6.1]{IS26}, while the upper bound follows from the fact that, for $c>kn-2\sqrt{p-q}$, the critical point $P_2$ becomes a stable focus and thus a direct connection to it (in the sense of Definition \ref{def.con}) is no longer possible. With respect to the solution $H$, it can either vanish or spread, according to Theorem \ref{th.Heav} and the sign of the critical velocity $\overline{c}$. This fact is a striking effect of the convection term, since it is obvious that, in the absence of the convective term (that is, for $k=0$), we have $\overline{c}=-2\sqrt{p-q}<0$ and $H(x,t)\to0$ locally uniformly as $t\to\infty$. The outcome of Theorem \ref{th.Heav} can be interpreted as a balance between the competing terms in Eq. \eqref{eq1}. Indeed, while for $\overline{c}<0$ we recognize the same qualitative outcome as in the non-convective case $k=0$, when $\overline{c}>0$ the convection term and the reaction one reinforce each other, and the mass is transported sufficiently fast to determine stabilization to the positive state $u\equiv1$.

Let us mention here that we have kept as initial condition the Heaviside function $H_0(x)$ in Theorem \ref{th.Heav} only for the simplicity of the statement. Actually, a rather general class of initial conditions $u_0$ produces solutions to the Cauchy problem associated to Eq. \eqref{eq1} for which Theorem \ref{th.Heav} remains true. Such a generalization is given in Theorem \ref{th.Heav.gen}, stated and proved at the end of Section \ref{sec.Heav}, together with a discussion of other possible cases.

Before continuing the discussion related to the critical velocity $\overline{c}$, let us discuss first the large time behavior of the ``anti-Heaviside" solution $\widetilde{H}$, whose initial condition is $1-H_0(x)$. In order to ease the notation, we denote by $\widetilde{c}:=kn+2\sqrt{p-q}$ the exact velocity for which the critical point $P_2$ changes from an unstable node to an unstable focus. The solution $\widetilde{H}$ vanishes as $t\to\infty$ with the velocity of propagation $\widetilde{c}$, as stated below.
\begin{theorem}\label{th.antiHeav}
Let $k$, $n$, $p$, $q$ be as in \eqref{range.exp}. Then the solution $\widetilde{H}(x,t)$ evolves with the velocity $\widetilde{c}=kn+2\sqrt{p-q}$. More precisely, we have (with locally uniform convergence)
\begin{equation}\label{convantiH1}
\lim\limits_{t\to\infty}\widetilde{H}(x-ct,t)=\begin{cases}
                                    0, & \mbox{if } c<\widetilde{c} \\
                                    1, & \mbox{if } c>\widetilde{c},
                                  \end{cases}
\end{equation}
and the following convergence in form to the traveling wave with profile $f_{\widetilde{c}}$
\begin{equation}\label{convantiH2}
\lim\limits_{t\to\infty}\sup_{x\in\real}|\widetilde{H}(x,t)-f_{\widetilde{c}}(x+\widetilde{c}t)|=0.
\end{equation}
\end{theorem}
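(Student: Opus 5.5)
Throughout, $w(\xi,t)=\widetilde{H}(\xi-\widetilde{c}t,t)$ denotes $\widetilde H$ in the frame moving with speed $\widetilde{c}$; it solves
$$
w_t=w_{\xi\xi}-\widetilde{c}\,w_\xi+k(w^n)_\xi+w^p-w^q .
$$
The profile $f_{\widetilde c}$ is a stationary solution of this equation. For $c=\widetilde{c}$ the point $P_2$ is a degenerate unstable node, and $l_{\widetilde c}$ leaves it along $e_\pm(\widetilde c)$ and enters $P_1$ through the region $Y<0$. Indeed, the trajectory ending at $P_1$ from $X>0,Y<0$ is unique by Proposition \ref{prop.P1} (or Proposition \ref{prop.P1q1} if $q=1$), and following it backwards it must come out of $P_2$ without crossing the $X$-axis. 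I would establish this first, using the classification in \cite{IS26}. It yields a decreasing profile $f_{\widetilde c}$ with $f_{\widetilde c}(-\infty)=1$ and $f_{\widetilde c}(+\infty)=0$, which is what the anti-Heaviside datum should select.

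Second step: the comparison principle and the monotonicity of $\widetilde H$. By the comparison principle, $0\le\widetilde H\le1$. Since $\widetilde H_0$ is nonincreasing and translation commutes with the equation, $\widetilde H(\cdot,t)$ is nonincreasing in $x$ for every $t>0$. The speed statement \eqref{convantiH1} then follows from two-sided barriers.

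For $c>\widetilde c$ (convergence to $1$) I would use the stable node/focus structure at $P_2$. The linearization of the equation around $u=1$ is $v_t=v_{xx}+knv_x+(p-q)v$, whose leftward spreading speed of instability is exactly $kn+2\sqrt{p-q}$. I would therefore build subsolutions close to $1$ from compactly supported pieces of $1-\varepsilon\phi$, with $\phi$ a truncated oscillating eigenfunction of that linearization; these exist because for $c>\widetilde c$ there is no real decay rate. This step is the mirror of the KPP argument with the roles of $0$ and $1$ exchanged.

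For $c<\widetilde c$ (convergence to $0$) I would use the shifted travelling waves $f_{c'}(x+c't)$ with $c'\in(c,\widetilde c]$, or directly $f_{\widetilde c}(x+\widetilde c t+\xi_0)$, as upper/lower barriers. Since $f_{\widetilde c}$ connects $1$ to $0$ monotonically, choosing $\xi_0$ so that $f_{\widetilde c}(\cdot+\xi_0)\ge\widetilde H_0$ fails only near the jump; handling this requires a suitable time shift.

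Third step: convergence in form, \eqref{convantiH2}. Here I would follow the Uchiyama/Bramson-type argument as adapted in \cite{Xu24}, working in the moving frame. The plan has three parts.
\begin{enumerate}
\item Prove by intersection-number (zero-number/lap number) arguments that for every $\xi_0$ the number of crossings between $w(\cdot,t)$ and $f_{\widetilde c}(\cdot+\xi_0)$ is nonincreasing in time. Since $\widetilde H_0$ is a step function, this number is at most one eventually.
\item Deduce that the level set $\xi(t)=\inf\{\xi:w(\xi,t)\le 1/2\}$ is asymptotically monotone and bounded, using the barriers of the second step.
\item Conclude, via compactness and parabolic estimates, that every $\omega$-limit point is a monotone stationary solution in the moving frame connecting $1$ and $0$, hence a translate of $f_{\widetilde c}$ by the uniqueness of $l_{\widetilde c}$. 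The translate is then pinned by the monotonicity of $\xi(t)$.
\end{enumerate}
The uniformity in $x$ follows because the tails are controlled by monotonicity and by the limits $0$ and $1$.

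Main obstacle: pinning the shift, i.e.\ showing $\xi(t)$ converges rather than drifting logarithmically. In classical KPP, Bramson's log correction appears precisely at the critical (pulled) speed, where the node is degenerate. Here, however, the wave invades the \emph{unstable} state $1$ from the stable state $0$ side, and the relevant decay of $f_{\widetilde c}$ near $1$ is of the form $(A+B\xi)e^{\lambda\xi}$. I would first try to exclude drift by constructing explicit super- and subsolutions of the form $f_{\widetilde c}(\xi\pm\delta(t))\pm\eta(t)$, with $\delta$ bounded and $\eta\to0$, using the linearization at $1$. If a log shift were genuinely present, \eqref{convantiH2} would need reinterpreting, so checking this carefully is the crux.
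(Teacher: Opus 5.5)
Your first step and the monotonicity of $\widetilde H$ are fine. The second step, however, is set up backwards in both directions, and this is a genuine gap.

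\textbf{The direction $c>\widetilde c$.} Here $P_2$ is an \emph{unstable node}. The exponents $\mu$ of $e^{\mu(x+ct)}$ for $v_t=v_{xx}+knv_x+(p-q)v$ solve $\mu^2-(c-kn)\mu+(p-q)=0$, and they are real in this range. Oscillating eigenfunctions exist only for $|c-kn|<2\sqrt{p-q}$, hence only for $c<\widetilde c$. Moreover, a truncated bump $\varepsilon\phi$ built from them is a subsolution for $v=1-u$, so $1-\varepsilon\phi$ is a \emph{super}solution of \eqref{eq1}. In fact no subsolution of the kind you describe can force $\widetilde H\to1$:
\begin{itemize}
\item it must lie below $\widetilde H_0$, which vanishes on $[0,\infty)$, so it cannot equal $1$ off a compact set;
\item if its supremum is $<1$, it is driven to $0$, because $u^p-u^q<0$ on $(0,1)$.
\end{itemize}
What is needed is a subsolution tending to $1$ at $-\infty$ and vanishing on a right half-line. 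The paper takes a trajectory leaving the unstable node $P_2$ inside the negatively invariant triangle of \cite[Proposition 3.2]{IS26}. This trajectory crosses the negative $Y$-axis, and the paper truncates its profile by $0$. Alternatively, your linearization does work here, in the form $1-\widetilde H\le\min\{1,e^{\sqrt{p-q}(x+\widetilde ct)}\}$. This is a supersolution of the $v$-equation because the exponential is increasing in $x$, $(1-v)^{n-1}\le1$, and $(1-v)^q-(1-v)^p\le(p-q)v$ on $[0,1]$.

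\textbf{The direction $c<\widetilde c$.} The waves $f_{c'}$ with $c'\in(kn,\widetilde c)$ oscillate around $1$, since $P_2$ is a focus there. The wave $f_{\widetilde c}$ satisfies $f_{\widetilde c}<1$ everywhere, so it fails to dominate $\widetilde H_0=1$ on the whole half-line $(-\infty,0)$, not just near the jump. No time shift repairs this (see below). The missing idea is the paper's truncation. For $c\in(kn,\widetilde c)$, take the trajectory entering $P_1$ from $Y<0$; it comes from the focus $P_2$ by \cite[Lemma 5.3 and Proposition 5.4]{IS26}. Then replace its profile by $1$ to the left of its last crossing of the level $1$. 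Your oscillating bumps belong in this direction, as subsolutions for $v$, not in the other one.

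\textbf{The third step.} Your worry in the last paragraph is justified. It makes item 2 of your third step (boundedness of $\xi(t)$) false, and it also makes \eqref{convantiH2} false when read with a fixed translate. Indeed, $v=1-\widetilde H$ solves $v_t=v_{xx}+kn(1-v)^{n-1}v_x+(1-v)^q-(1-v)^p$ and is nondecreasing in $x$ by Lemma \ref{lem.monot}. By the two inequalities above, $v$ is a subsolution of the linear problem with the same datum $H_0$. Hence
\[
v(x,t)\le e^{(p-q)t}\,\Phi\Big(\frac{x+knt}{\sqrt{2t}}\Big),
\]
where $\Phi$ is the standard Gaussian distribution function. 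At $x=x_0-\widetilde ct$ the right-hand side is $O(t^{-1/2})e^{\sqrt{p-q}\,x_0}$. Therefore $\widetilde H(x_0-\widetilde ct,t)\to1$ for every fixed $x_0$, whereas $f_{\widetilde c}(x_0)<1$. Three consequences follow:
\begin{itemize}
\item the front lags behind $-\widetilde ct$ by at least $\frac{1}{2\sqrt{p-q}}\log t-C$, a Bramson-type delay at a linearly selected speed;
\item $\sup_x|\widetilde H(x,t)-f_{\widetilde c}(x+\widetilde ct+x_1)|\not\to0$ for every $x_1$;
\item no translate of $f_{\widetilde c}(x+\widetilde ct)$ can lie above $\widetilde H$ at any time, since comparison would then keep it above for all later times.
\end{itemize}

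What can be proved is convergence in form: $\sup_x|\widetilde H(x,t)-f_{\widetilde c}(x+\varphi(t))|\to0$ with $\varphi(t)/t\to\widetilde c$. This is exactly what the paper's KPP-type argument delivers: normalized time translates $\widetilde H(x+\varphi(t_0),t+t_0)$, the strong maximum principle for their differences, and identification of the limit as a traveling wave. The identity $c(t)=\overline c\,t$ in Step 4 of the proof of Theorem \ref{th.Heav} concerns the limit object, not the normalizing shift $\varphi$. Your zero-number/$\omega$-limit route is a legitimate alternative to that argument and yields the same conclusion, but neither route can pin the shift. You should drop the boundedness claim in item 2 and state the result modulo $\varphi(t)$.
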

The outcome of Theorem \ref{th.antiHeav} can be explained at an intuitive level by the fact that in this case the convection teams up with the other terms of the equation in order to push the front in the backward direction (that is, towards $-\infty$) with the minimal velocity allowing for the existence of a monotone traveling wave connecting the level one at $-\infty$ to the level zero at $+\infty$; this velocity is exactly $\widetilde{c}$, since for $c<\widetilde{c}$ the critical point $P_2$ becomes an unstable focus and thus any traveling wave with velocity $c<\widetilde{c}$ will present infinitely many (damped) oscillations around the level one. We also observe that, in fact, Theorem \ref{th.antiHeav} is equivalent to Theorem \ref{th.Heav} for convection coefficients $k<0$, since, as explained already in the Introduction, considering negative convection coefficients $k<0$ reduces to Eq. \eqref{eq1} after performing the simple transformation $x=-y\in\real$.

Let us now come back to the critical velocity $\overline{c}$ defined in \eqref{crit.vel}. As explained above, the outcome of Theorem \ref{th.Heav} is due to the convection term $k(u^n)_x$, which competes against the natural movement of the waves generated by the non-convective equation. The following result makes this effect even more striking, showing that a \emph{continuous transition from vanishing to spreading} is achieved through the coefficient $k$ of the convective term, even for fixed exponents $n$, $p$, $q$.
\begin{theorem}\label{th.transition}
Let $n$, $p$, $q$ as in \eqref{range.exp}. Then, there is $k^*(n,p,q)>0$ depending only on $n$, $p$ and $q$ and satisfying
\begin{equation}\label{crit.coef}
\frac{2\sqrt{p-q}}{n}\leq k^*(n,p,q)\leq\max\left\{1,\frac{p}{n}\right\},
\end{equation}
such that $\overline{c}<0$ if $-\infty<k<k^*(n,p,q)$, $\overline{c}=0$ if $k=k^*(n,p,q)$ and $\overline{c}>0$ if $k>k^*(n,p,q)$. If, furthermore,
\begin{equation}\label{cond.conv}
n\leq\frac{q+1}{2},
\end{equation}
then the equality in the lower bound of \eqref{crit.coef} is achieved; that is, $\overline{c}>0$ for any $k$ such that $kn-2\sqrt{p-q}>0$.
\end{theorem}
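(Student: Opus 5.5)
The plan is to study the map $k\mapsto\overline{c}(k)$ with $n,p,q$ fixed, and show three things: it is strictly increasing, it is continuous, and it has the right sign at the two ends of the interval \eqref{crit.coef}. Then $k^*$ is the unique zero of $\overline{c}(k)$.

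\textbf{Monotonicity.} I would compare trajectories in the phase plane of \eqref{PSsyst}. For fixed $c$, write the trajectories as graphs $Y=Y(X)$ in $X\in(0,1)$, $Y>0$. They solve
$$
\frac{dY}{dX}=c-knX^{n-1}+\frac{X^q-X^p}{Y}.
$$
The right-hand side is strictly decreasing in $k$ and strictly increasing in $c$. Near $P_1$, Propositions \ref{prop.P1}, \ref{prop.P1c0} and \ref{prop.P1q1} show that the local behaviour of $l_c$ depends, to leading order, only on $c$ (or on $q$, $n$ when $c=0$). Hence a standard comparison argument should give the following: if $k_1<k_2$ and $l_c$ connects directly to $P_2$ for $k_1$, then $l_c$ for $k_2$ lies below it in $(0,1)$. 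A trajectory lying below one that reaches $(1,0)$ without crossing $Y=0$ must also reach $P_2$ without crossing, unless it first hits $Y=0$ with $X<1$. That is impossible there, since $dY/dX\to+\infty$ as $Y\to0^+$ with $X^q>X^p$. So direct connection is preserved as $k$ increases, giving $\overline{c}(k_2)\geq\overline{c}(k_1)$.

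The same comparison, with $c$ in place of $k$, shows that the set in \eqref{crit.vel} is an interval $(-\infty,\overline{c}]$. Strict monotonicity should follow because the direct connection at $\overline{c}$ enters $P_2$ along $e_-$, which is a non-generic, codimension-type condition. A small increase of $k$ pushes the trajectory strictly below, so it enters along the slow direction with room to spare. This room should allow $c$ to be increased slightly, giving $\overline{c}(k_2)>\overline{c}(k_1)$. Continuity of $\overline{c}$ in $k$ should follow from continuous dependence of $l_c$ on $(c,k)$, using the uniqueness of $l_c$, together with openness of the ``crosses $Y=0$ before $P_2$'' condition.

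\textbf{Lower bound.} By the remark after Theorem \ref{th.Heav}, $\overline{c}\leq kn-2\sqrt{p-q}$. Hence $\overline{c}<0$ whenever $kn<2\sqrt{p-q}$, which gives $k^*\geq2\sqrt{p-q}/n$. The case $k\leq0$ follows from monotonicity and the known value $\overline{c}=-2\sqrt{p-q}$ at $k=0$.

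\textbf{Upper bound.} I need to show $\overline{c}>0$ for $k>\max\{1,p/n\}$. It suffices to show that $l_0$ connects directly when $k\geq\max\{1,p/n\}$, and then use strict monotonicity. For this I would exhibit a barrier curve $Y=\varphi(X)$ on $(0,1)$, with $\varphi(0)=\varphi(1)=0$ and $\varphi>0$ in between, along which the vector field with $c=0$ points downward. A natural choice is $\varphi(X)=(X^q-X^p)/(knX^{n-1})$, or a simpler one such as $\varphi=X^{q}(1-X)/k$. The condition $\varphi\,(knX^{n-1}-\varphi')\geq X^q-X^p$ should reduce to elementary inequalities using $kn\geq p$ and $k\geq1$. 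The trajectory $l_0$ starts below $\varphi$, by the asymptotics of Proposition \ref{prop.P1c0}, which must be checked in each of the three cases. It is then trapped in $\{0<Y<\varphi(X)\}$ and reaches $P_2$ directly.

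\textbf{Case \eqref{cond.conv}.} Here I would aim to show that $l_c$ connects directly for every $c<kn-2\sqrt{p-q}$ with $c\geq0$, so that $\overline{c}=kn-2\sqrt{p-q}$. When $n\leq(q+1)/2$, the convective term $knX^{n-1}$ dominates $X^{q-1}$-type terms near $0$. Using the center-manifold asymptotics of Propositions \ref{prop.P1} and \ref{prop.P1c0}, this should allow a barrier of the form $Y=\mu(1-X)X^{q-n+1}$, or $Y=\lambda_-(c)(X-1)$ patched with the behaviour at $0$.

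I expect the main obstacle to be constructing explicit barriers valid uniformly on all of $(0,1)$, both for the bound $\max\{1,p/n\}$ and for the equality case, together with the strictness and continuity of monotonicity at the critical configuration.
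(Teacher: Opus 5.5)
There are genuine gaps in both barrier steps. The lower bound is handled exactly as in the paper.

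\textbf{The case \eqref{cond.conv}.} Your stated target there is false, so no barrier can deliver it. You aim to show that $l_c$ connects directly for every $0\le c<kn-2\sqrt{p-q}$, so that $\overline{c}=kn-2\sqrt{p-q}$.

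To see that this fails, take $c>0$. Along $l_c$, while $0<X<1$ and $Y>0$, you have $dY/dX\ge c-knX^{n-1}$. Hence $Y\ge cX-kX^n$ and $Y(1)\ge c-k$. For $c>k$ the trajectory therefore passes above $P_2$, and $c\in\mathcal{C}$. Thus $\overline{c}\le k$ always. This is strictly below $kn-2\sqrt{p-q}$ as soon as $k(n-1)>2\sqrt{p-q}$; for example $n=2$, $q=3$, $p=4$, $k>2$, which satisfies \eqref{cond.conv}.

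The theorem only needs information at $c=0$: that $l_0$ connects directly for every $k\ge 2\sqrt{p-q}/n$. Your strict monotonicity in $k$ then upgrades this to $\overline{c}>0$ for $k>2\sqrt{p-q}/n$. The paper obtains the $c=0$ statement with the barrier $Y=A(X^{q+1-n}-X^{p+1-n})$ and $k=(A^2(p-q)+1)/(An)$. The flux across this curve equals $A^2X^q(1-X^{p-q})G(X)$, with $G(X)=p-q+(q+1-n)X^{q+1-2n}-(p+1-n)X^{p+1-2n}$. One has $G\ge 0$ on $[0,1]$ precisely because $q+1-2n\ge 0$. Minimizing $k$ over $A>0$ gives $2\sqrt{p-q}/n$.

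The factor $1-X^{p-q}$ matters. With your $\mu X^{q+1-n}(1-X)$, the conditions at $X=1$ and $X=0$ read $\mu kn-\mu^2\ge p-q$ and $\mu kn\ge 1$. These are incompatible for $kn$ near $2\sqrt{p-q}$ when $p-q<1/2$ and $q+1>2n$.

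\textbf{The upper bound.} Your invariance inequality has the sign of $\varphi'$ reversed. For $c=0$, the region under $Y=\varphi(X)$ is positively invariant when $\varphi(\varphi'+knX^{n-1})\ge X^q-X^p$. With the correct sign, both of your candidates fail:
\begin{itemize}
\item On the nullcline $\varphi=(X^q-X^p)/(knX^{n-1})$ the condition reduces to $\varphi\varphi'\ge 0$. This is false where $\varphi$ decreases near $X=1$: trajectories cross this nullcline horizontally, hence exit upward on its decreasing part. Also $\varphi(0)\neq 0$ when $n\ge q+1$.
\item For $\varphi=X^q(1-X)/k$, the left side is $O(X^{2q-1}+X^{q+n-1})=o(X^q)$ near $0$ if $q>1$. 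For $q=1$ the condition forces $k<1$.
\end{itemize}

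The missing idea is to start from the exact trajectory $Y=X-X^n$ of the case $p=n$, $q=1$, $k=1$. For $p=n$ the flux across it is $(k-1)nX^{n-1}(X-X^n)+X-X^q>0$ when $k>1$. Moreover $\lambda_-(0)<-(n-1)$, which forces $l_0$ to enter $P_2$ along $e_+(0)$. Other values of $p$ are reached by comparison rather than by new barriers. The paper integrates backwards from $P_2$ the trajectory tangent to $e_-(0)$ and compares it with the corresponding trajectory in an already settled case with $p=n$. It uses $X^q-X^p\le X-X^n$ when $p\le n$, and $knX^{n-1}\ge k'pX^{p-1}$ with $k'=kn/p>1$ (convection exponent $p$) when $p>n$.

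\textbf{Monotonicity in $k$.} Your monotonicity, strictness and continuity framework is worthwhile, since the paper leaves monotonicity in $k$ implicit. Two adjustments are needed. First, obtain $l_c^{k_2}$ below $l_c^{k_1}$ from Lemma \ref{lem.flow} with $\Gamma=l_c^{k_1}$, since the $k_2$-flow crosses it downward. Leading-order asymptotics at $P_1$ are mostly identical for $k_1$ and $k_2$ and cannot decide the order. Second, strictness rests on $\lambda_-(c)$ being strictly decreasing in $k$. A trajectory lying below one tangent to $e_-(c;k_1)$ cannot be tangent to the steeper $e_-(c;k_2)$, so it enters along $e_+$, and then $c\in\mathcal{A}$.
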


We plot in Figure \ref{fig:main} the outcome of a numerical experiment showing the variation with respect to time of the Heaviside solution, for all the three relative positions of $k$ with respect to the critical coefficient $k^*(n,p,q)$. The graph in the dashed line corresponds to the explicit solution available for the case $p=n$ and $q=1$ (see Section \ref{subsec.exp}), in order to compare the evolution of the true solution with its asymptotic profile. We mention that the numerical experiment has been performed on a compactified real line by employing the transformation $z=\frac{2}{\pi}\arctan\,x$.

\begin{figure}[ht!]
  \begin{center}
  \subfigure[Case $k<k^*(n,p,q)$: $k=0.5$]{\includegraphics[width=7cm,height=5cm]{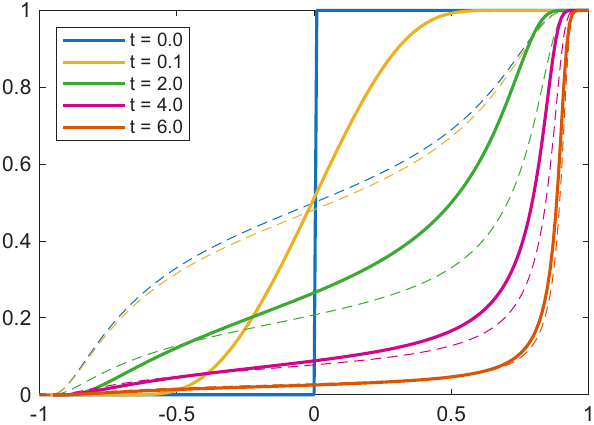}}
  \subfigure[Case $k>k^*(n,p,q)$: $k=2$]{\includegraphics[width=7cm,height=5cm]{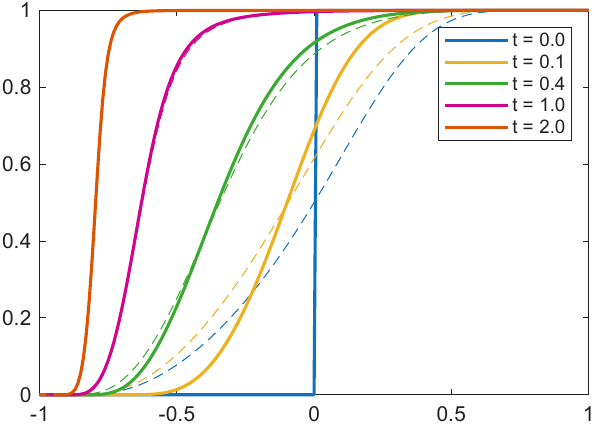}}
  \subfigure[Case $k=k^*(n,p,q)$: $k=1$]{\includegraphics[width=9cm,height=5cm]{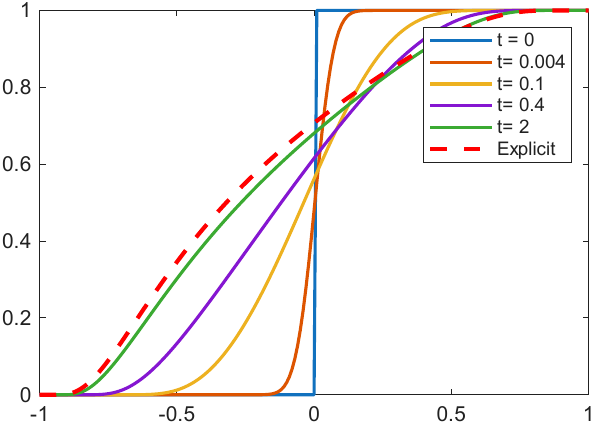}}
  \end{center}
  \caption{Behavior of the solution with initial condition $H_0(x)$ for all the relative positions of $k$ with respect to $k^*(n,p,q)$: experiment for $n=3$, $p=3$, $q=1$, with $k^*(n,p,q)=1$}\label{fig:main}
\end{figure}

It thus follows that both the coefficient $k$ and the exponent $n$ modulate the strength of the convection. Indeed, the natural movement determined by the reaction, absorption and diffusion terms is to converge to zero, while the convection term produces an evolution of the wavefronts in the opposite direction (towards the spreading regime), creating thus a competition with the effect generated by the other terms. It is on the one hand obvious that, as larger the coefficient $k$, as stronger the effect of the convection term becomes. Note that we have also considered coefficients $k<0$ in the statement of Theorem \ref{th.transition}, since the result is the same: when $k<0$ the convection changes direction and pushes even more the solutions towards the vanishing regime.

On the other hand, since $(u^n)_x=nu^{n-1}u_x$, if the convection exponent $n$ is sufficiently small (in particular, satisfying \eqref{cond.conv}), then the factor $u^{n-1}$ remains sufficiently large even for small values of $u$, enhancing the effect of the convection and thus favoring the faster transport of mass. This intuitively explains the fact that the convection tops the above mentioned competition and spreads the waves with a positive velocity $\overline{c}$. In the opposite range $n>(q+1)/2$, the critical coefficient $k^*(n,p,q)$ becomes a threshold between the vanishing and spreading regimes and a number of estimates (some of them being sharp) of $k^*(n,p,q)$ are given in Section \ref{sec.trans} the paper when $n$, $p$, $q$ are related in suitable ways. Let us mention here that, in the case $p=n$, $q=1$ and any $k>0$ such that $k^2>1/(n-1)$, we are able to obtain the explicit value of the critical velocity as $\overline{c}=(k^2-1)/k$, whose sign changes at $k^*(n,n,1)=1$, see the beginning of Section \ref{subsec.exp}.

We end this discussion by observing that the threshold convection coefficient $k^*(n,p,q)$ derived in Theorem \ref{th.transition} plays a role that is analogous to the transition parameter separating pulled and pushed propagation regimes in Burgers-Fisher-KPP type equations. In both settings, the coefficient in front of the convective term governs a qualitative change in the selection mechanism of the asymptotic front speed. As commented before, in our generalized Burgers-Fisher-KPP equation $k^*(n,p,q)$ marks the threshold between negative and positive critical velocities, that is, between vanishing and spreading regimes, reflecting whether convection is strong enough to overcome the diffusive-reactive dynamics. This is conceptually parallel to the pulled to pushed transition described in works such as \cite{AHR25, AH26, EvS00, GGHR12} (see also references therein), where the front speed changes from being linearly determined (pulled) to nonlinearly selected (pushed) once the convective coefficient exceeds a critical threshold. In both frameworks, the transition corresponds to a change in the phase space structure when selecting the admissible heteroclinic connections, and thus the relevant traveling waves.

\section{Construction of sub- and supersolutions}

In this preparatory section, we construct suitable sub- and supersolutions to Eq. \eqref{eq1} by truncating suitable traveling waves with the constant solutions zero or one. These sub- and supersolutions are the main tool in the proofs of the main results and will be constructed employing the system \eqref{PSsyst}. Note that, due to Duhamel's formula, solutions to Eq. \eqref{eq1} are classical, but for sub- and supersolutions we employ the weak formulation, since we want to allow them to have a possible jump of the first derivative.
\begin{definition}\label{def.subsuper}
We say that a function $u\in L^{\infty}(\real)\cap C(\real)$ such that $u_x\in L^1_{\rm loc}(\real\times(0,\infty))$ is a subsolution to Eq. \eqref{eq1} if the following inequality
\begin{equation*}
\int_{\real}(u(t)\varphi(t)-u(0)\varphi(0))\,dx\leq\int_0^t\int_{\real}[u\varphi_t-u_x\varphi_x+ku^n\varphi_x+(u^p-u^q)\varphi]\,dx\,ds
\end{equation*}
holds true for any function $\varphi\in C_0^{\infty}(\real\times[0,\infty))$ such that $\varphi\geq0$. We say that $u$ is a supersolution if the inequality sign is reversed.
\end{definition}
Employing the weak formulation allows us to consider sub- and supersolutions that are only continuous at some point, but not differentiable, as the ones constructed in Lemmas \ref{lem.sub} and \ref{lem.super} given in the forthcoming lines.

Consider first $c\in(-\infty,kn-2\sqrt{p-q})$, a range in which the critical point $P_2$ is a stable node. In this range, it is obvious that $0>\lambda_+(c)>\lambda_-(c)$, where $\lambda_+(c)$ and $\lambda_-(c)$ are the eigenvalues of the linearization of the system in a neighborhood of $P_2$, defined in \eqref{eigen.P2}. By standard theory of dynamical systems (see \cite[Theorem 4.3, Chapter 2]{Z4}), all but one of the trajectories connecting to $P_2$ from the positive cone $(0,\infty)^2$ reach this critical point tangent to the eigenvector $e_{+}(c)=(1,\lambda_+(c))$, and a single trajectory reaches it tangent to $e_{-}(c)=(1,\lambda_-(c))$. We recall here that, for any $c\in\real$, $l_c$ denotes the unique trajectory of the system \eqref{PSsyst} with velocity parameter $c$ going out of the critical point $P_1$.
\begin{lemma}\label{lem.sub}
Let $n$, $p$, $q$, $k$ be as in \eqref{range.exp} and $c\in(-\infty,kn-2\sqrt{p-q})$ be such that the trajectory $l_c$ of the system \eqref{PSsyst} connects directly $P_1$ to $P_2$ in the sense of Definition \ref{def.con} and tangent to the eigenvector $e_{+}(c)$. Then, for any $R>0$, there exists a subsolution to Eq. \eqref{eq1} in the form of a traveling wave with velocity $c$,
\begin{equation*}
\underline{U}_{c,R}(x,t)=\underline{f}_{c,R}(x+ct), \quad (x,t)\in\real\times(0,\infty)
\end{equation*}
such that $0\leq\underline{U}_{c,R}<1$ and $\underline{U}(x,0)=0$ for $x\in(-\infty,R]$.
\end{lemma}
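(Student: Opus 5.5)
The plan is to take a trajectory of \eqref{PSsyst} (same $c$) that runs just above $l_c$, leaves the $Y$-axis at a point $(0,Y_0)$ with $Y_0>0$, and enters $P_2$ monotonically from the left. The corresponding profile $g$ vanishes at a finite point and increases to $1$. Truncating it with the constant $0$, i.e. $\underline{f}_{c,R}=\max\{0,g\}$, gives the subsolution; the kink points in the favorable direction.

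\textbf{Step 1: forward behavior of a nearby trajectory.} Fix $X_1\in(0,1)$ and let $(X_1,Y_1)$ be the point of $l_c$ with abscissa $X_1$; then $Y_1>0$ because $l_c$ connects directly. Take $Q=(X_1,Y_1+\varepsilon)$ with $\varepsilon>0$ small. Since $P_2$ is a stable node for $c<kn-2\sqrt{p-q}$, its basin of attraction is open, and by continuous dependence the forward orbit of $Q$ converges to $P_2$.

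I must also check that this orbit stays in $\{0<X<1,\,Y>0\}$ until it reaches $P_2$. Near $P_2$, generic orbits are tangent to $e_+(c)=(1,\lambda_+(c))$ with $\lambda_+(c)<0$, and the only other direction is $e_-(c)$, which is steeper. The orbit of $Q$ lies above $l_c$ and, for $\varepsilon$ small, below the unique orbit tangent to $e_-(c)$ on the side $X<1$, $Y>0$. It is therefore trapped in the curvilinear sector between them. This sector lies in $\{X<1,Y>0\}$, so the orbit approaches $P_2$ tangent to $e_+(c)$ from $X<1$, $Y>0$. Away from $P_2$, the same conclusion follows from continuous dependence on compact time intervals, together with the fact that $l_c$ itself stays in $Y>0$. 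This localization near $P_2$ is the step I expect to require the most care. The fact that $l_c$ enters tangent to $e_+(c)$, not $e_-(c)$, is exactly what makes a neighborhood of it on both sides behave in the same way.

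\textbf{Step 2: backward behavior.} In backward time, $X$ decreases strictly while $Y>0$, and the orbit cannot cross $l_c$, by uniqueness. It cannot converge to $P_1$ either, since by Propositions \ref{prop.P1}, \ref{prop.P1c0} and \ref{prop.P1q1} the unique orbit leaving $P_1$ into $Y>0$ is $l_c$. As a function of $X$, the orbit satisfies
$$
\frac{dY}{dX}=c-knX^{n-1}+\frac{X^q-X^p}{Y},
$$
whose right-hand side is bounded on $[0,X_1]$ as long as $Y$ stays above $l_c$, hence bounded away from $0$ for $X$ away from $0$. It is also bounded for large $Y$. Thus $Y$ cannot blow up, and the orbit reaches $X=0$ at some $Y_0\in(0,\infty)$. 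Undoing \eqref{PSchange}, we obtain a profile $g$ with $g(\xi_0)=0$, $g'(\xi_0)=Y_0>0$, $g'>0$ on $(\xi_0,\infty)$ and $g(\xi)\uparrow1$ as $\xi\to\infty$, so $0<g<1$ there. By translation invariance we choose $\xi_0=R$.

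\textbf{Step 3: the subsolution.} Define $\underline{f}_{c,R}(\xi)=0$ for $\xi\le R$ and $\underline{f}_{c,R}(\xi)=g(\xi)$ for $\xi>R$, and set $\underline{U}_{c,R}(x,t)=\underline{f}_{c,R}(x+ct)$. Both pieces are classical solutions of \eqref{eq1} in their regions, and $\underline{U}_{c,R}$ is continuous and Lipschitz, so it belongs to the class of Definition \ref{def.subsuper}. In the weak formulation, integrating by parts on each side of the moving interface $x+ct=R$ leaves only the boundary term coming from the jump of $u_x$. The convective flux $ku^n$ and $u$ itself are continuous across the interface, so they contribute no jump. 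The remaining term equals $-\int_0^t[u_x]\varphi\,ds$ with $[u_x]=g'(R^+)-0=Y_0>0$. It therefore has the sign required for the subsolution inequality when $\varphi\ge0$; equivalently, the maximum of the two solutions $0$ and $g$ is a subsolution.

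Finally, $0\le\underline{U}_{c,R}<1$, and $\underline{U}_{c,R}(x,0)=0$ for $x\le R$, as required.
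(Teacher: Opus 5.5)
Your proof is correct, but it follows a different orbit than the paper.

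\textbf{The paper's route.} The paper works with the strong stable orbit $L$ of $P_2$, the unique one tangent to $e_{-}(c)$. Its forward behaviour is automatic, and the slope ordering $\lambda_{-}(c)<\lambda_{+}(c)<0$ alone places it above $l_c$ near $P_2$. The paper then uses essentially your Step 2, phrased as a contradiction with Poincar\'e--Bendixson: the bound $dY/dX\geq c-knX^{n-1}$ excludes vertical asymptotes, $X$ is monotone, and the orbit leaving $P_1$ is unique. This shows that $L$ itself reaches the $Y$-axis at some $Y_0>0$, and the paper truncates that profile.

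\textbf{Your route.} You perturb $l_c$ and obtain directly a profile entering $P_2$ tangent to $e_{+}(c)$. This is the type of subsolution the paper only obtains afterwards, in Corollary \ref{cor.sub}.

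The price is your Step 1. You need openness of the basin of $P_2$ together with the local sector structure at the node, to keep the perturbed orbit in $\{0<X<1,\,Y>0\}$ and off the line $X=1$. This works. One way is to take a small forward-invariant neighbourhood of $P_2$ in which the sector between $l_c$ and $L$ lies in the open upper-left quadrant. The perturbed orbit enters this neighbourhood near the entry point of $l_c$, on the side of $L$. You correctly flag this as the delicate step, and you correctly use there the hypothesis that $l_c$ is tangent to $e_{+}(c)$.

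In Step 2, the right-hand side is not bounded near $X=0$. What you actually need is the one-sided bound above, plus the fact that $Y>Y_c(X)\geq m_\delta>0$ on $X\geq\delta$. That rules out $X$ decreasing to a positive limit as $\xi\to-\infty$. You have these ingredients, so this is a presentation issue, not a gap.

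\textbf{What each route buys.} The paper's route gives the global fact that $L$ crosses the $Y$-axis. This immediately yields the whole family of subsolutions from the region bounded by $l_c$, $L$ and $\{X=0\}$. It is also reused later, in the comparison argument in the proof of Theorem \ref{th.transition}, which needs the $e_{-}(0)$-trajectory to cross the $Y$-axis. Your route avoids any global information on $L$, but only produces orbits close to $l_c$.
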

\begin{proof}
Let $c$ be as in the statement and let $L$ be the unique trajectory of the system \eqref{PSsyst} connecting to $P_2$ tangent to the eigenvector $e_{-}(c)$. Let $(X_1(\xi),Y_1(\xi))$ be the parametrization of $L$ in the system \eqref{PSsyst}. It follows that
$$
\lim\limits_{\xi\to\infty}X_1(\xi)=1, \quad \lim\limits_{\xi\to\infty}Y_1(\xi)=0.
$$
We prove that there is $\xi_0\in\real$ such that $X_1(\xi_0)=0$, $Y_1(\xi_0)>0$, that is, the trajectory $L$ crosses the $Y$-axis. From the first equation of the system \eqref{PSsyst}, we deduce that $X'(\xi)>0$ whenever $(X(\xi),Y(\xi))\in(0,\infty)^2$. Assume for contradiction that the trajectory $L$ does not intersect the $Y$-axis. Consider next the region limited by the vertical lines $X=0$, $X=1$ and the curve $l_c$, that is,
$$
\mathcal{S}:=\{(X,Y)\in\real^2: 0<X<1, Y>Y_c(X)\},
$$
where $Y_c(X)$ is the parametrization of the curve $l_c$ obtained by inverting the increasing function $\xi\mapsto X(\xi)$. Since the trajectory $l_c$ connecting directly $P_1$ to $P_2$ is a separatrix of the system \eqref{PSsyst} and $\lambda_-(c)<\lambda_+(c)<0$, it follows that there is $\xi_0\in\real$ such that $(X_1(\xi),Y_1(\xi))\in\mathcal{S}$ for any $\xi>\xi_0$. The monotonicity of $X_1$ and the assumption that the trajectory $L$ does not cross the $Y$-axis imply that the trajectory $L$ remains forever in the region $\mathcal{S}$. Thus, the inverse function theorem allows to parametrize the curve $L$ as the graph of a function $Y_1=Y_1(X_1)$, whose derivative is
\begin{equation}\label{interm1}
\begin{split}
Y_1'(X_1)&=\frac{cY_1(X_1)-knX_1^{n-1}Y_1(X_1)-X_1^p+X_1^q}{Y_1(X_1)}=c-knX_1^{n-1}+\frac{X_1^q-X_1^p}{Y_1(X_1)}\\
&\geq c-knX_1^{n-1},
\end{split}
\end{equation}
the bound from below in \eqref{interm1} following from the fact that we are under the assumption that $X_1\in[0,1]$ along the trajectory $L$. It follows easily from \eqref{interm1} that there is no $\widetilde{X}\in[0,1]$ such that the trajectory $L$ has a vertical asymptote in the sense that
$$
\lim\limits_{X_1\to\widetilde{X}^+}Y_1(X_1)=+\infty.
$$
Indeed, if this was the case, then \eqref{interm1} gives that $\lim\limits_{X_1\to\widetilde{X}^+}Y_1'(X_1)=c-kn\widetilde{X}^{n-1}$, contradicting the vertical asymptote at $\widetilde{X}$. The previous argument ensures that the whole trajectory $L$ remains in a compact set, while the bound from below of the derivative $Y_1'(X_1)$ in \eqref{interm1} prevents the trajectory $L$ for oscillating infinitely many times inside the compact set. The monotonicity of the coordinate $X$, the uniqueness of the unstable trajectory from $P_1$, the lack of other critical points in the strip $0\leq X<1$ and an application of the Poincar\'e-Bendixon's Theorem (see \cite[Section 3.7]{Pe}) lead to the non-existence of an $\alpha$-limit for the trajectory $L$ and thus a contradiction, which stems from our initial assumption that $L$ does not cross the $Y$-axis. It follows that there is $\xi_0\in\real$ such that $X_1(\xi_0)=0$ and $Y_1(\xi_0)>0$. By undoing the change of variable \eqref{PSchange}, we infer that there is a traveling wave profile $\widetilde{f}_c$ such that
$$
\widetilde{f}_c(\xi_0)=0, \quad \widetilde{f}'_{c}(\xi_0)>0, \quad \widetilde{f}_c(\xi)\in(0,1) \ {\rm for} \ \xi\in(\xi_0,\infty).
$$
We then extend $\widetilde{f}_c$ by zero for $\xi\in(-\infty,\xi_0)$ and translate in space to define
$$
\underline{U}_{c,R}(x,t)=\widetilde{f}_{c}(x+\xi_0-R+ct),
$$
which satisfies the claimed property.
\end{proof}
The following consequence is important in some of the forthcoming proofs.
\begin{corollary}\label{cor.sub}
There exist subsolutions as in Lemma \ref{lem.sub} corresponding to profiles connecting to $P_2$ but tangent to the eigenvector $e_{+}(c)$.
\end{corollary}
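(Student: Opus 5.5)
The plan is to run the argument of Lemma \ref{lem.sub} again, with the single trajectory $L$ tangent to $e_-(c)$ replaced by one of the many trajectories that enter the stable node $P_2$ tangent to $e_+(c)$. Fix $c<kn-2\sqrt{p-q}$ such that $l_c$ connects directly $P_1$ to $P_2$ and is tangent to $e_+(c)$. By \cite[Theorem 4.3, Chapter 2]{Z4}, all trajectories that enter $P_2$ except one do so tangent to $e_+(c)$. In a small neighborhood of $P_2$ they form a one-parameter family, and they lie on both sides of $l_c$. I choose one such trajectory $L'$, with parametrization $(X_2(\xi),Y_2(\xi))$, which lies strictly above $l_c$ near $P_2$, i.e. in the region $\mathcal{S}$ of Lemma \ref{lem.sub}. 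Such a choice is possible because $l_c$ itself enters tangent to $e_+(c)$, so the family of $e_+$-tangent trajectories contains curves on both sides of $l_c$ arbitrarily close to it. One could also take $L'$ between $l_c$ and the special $e_-$-tangent trajectory $L$.

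Next I show that $L'$, followed backward in $\xi$, crosses the $Y$-axis at a point with $Y>0$. Trajectories cannot cross, so $L'$ stays in $\mathcal{S}$ for as long as $0<X_2<1$. In $\mathcal{S}$ we have $Y>0$, hence $X_2'>0$, so going backward $X_2$ decreases. The estimate \eqref{interm1} holds verbatim for $L'$: $Y_2'(X_2)\geq c-knX_2^{n-1}$. This rules out vertical asymptotes in $[0,1]$, so the backward orbit stays in a compact set. If $L'$ never reached $X=0$, it would have an $\alpha$-limit set inside $\{0\le X\le1\}\cap\overline{\mathcal{S}}$. Since the only critical points there are $P_1$ and $P_2$, and the trajectory leaving $P_1$ into $X>0$ is unique ($l_c$, by Propositions \ref{prop.P1}, \ref{prop.P1c0}, \ref{prop.P1q1}), Poincar\'e--Bendixson excludes every possible $\alpha$-limit, exactly as in the proof of Lemma \ref{lem.sub}. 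Hence there is $\xi_0$ with $X_2(\xi_0)=0$ and $Y_2(\xi_0)>0$.

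Finally I undo \eqref{PSchange} to obtain a profile $\widetilde f$ with $\widetilde f(\xi_0)=0$, $\widetilde f'(\xi_0)>0$ and $\widetilde f\in(0,1)$ on $(\xi_0,\infty)$. I extend it by zero for $\xi<\xi_0$ and translate it by $R$, as before. It remains to check that this truncation is a weak subsolution in the sense of Definition \ref{def.subsuper}. Both pieces are classical solutions, and at the junction the derivative jumps from $0$ up to $\widetilde f'(\xi_0)>0$, a convex kink. Integrating by parts against $\varphi\ge0$, the boundary term $-[u_x]\varphi$ has the correct sign for a subsolution, while the terms $u^n$ and the reaction terms are continuous and contribute nothing at the junction.

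I expect the main obstacle to be justifying that $L'$ can be placed above $l_c$ and is not trapped between $l_c$ and something else. The family of $e_+$-tangent orbits must fill a sector on both sides of $l_c$ near the node, and the special orbit $L$ must not separate them inconveniently. My first approach would be to use the node's local structure, which is topologically a star of curves tangent to $e_+$, one on each side of every given one, apart from the two $e_-$ branches. From that structure I would pick $L'$ between $l_c$ and the upper branch of $L$.
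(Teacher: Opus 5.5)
Your proposal is correct and follows essentially the paper's route. The paper takes any point in the open region $\mathcal{T}$ bounded by $l_c$, $L$ and the axis $X=0$, shows by the arguments of Lemma \ref{lem.sub} that its trajectory ends at $P_2$ and crosses $X=0$ backward, and then uses the uniqueness of the $e_{-}(c)$-tangent trajectory $L$ to conclude tangency to $e_{+}(c)$. The same observation removes the obstacle you anticipate: any trajectory through a point strictly between $l_c$ and $L$ is automatically $e_{+}(c)$-tangent, so you do not need the finer local structure of the node.
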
  
\begin{proof}
Recalling that $L$ is the unique trajectory of the system \eqref{PSsyst} connecting to $P_2$ tangent to the eigenvector $e_{-}(c)$, we consider the open region $\mathcal{T}$ limited by the trajectories $l_c$, $L$ and the axis $X=0$. We then easily deduce by similar arguments as in the proof of Lemma \ref{lem.sub} that any trajectory of the system \eqref{PSsyst} passing through a point in $\mathcal{T}$ ends in the critical point $P_2$ and, in the backward direction, crosses the axis $X=0$. The uniqueness of the trajectory tangent to $e_{-}(c)$ implies that all these trajectories enter $P_2$ tangent to the leading eigenvector $e_{+}(c)$.
\end{proof}
The next lemma is aimed at constructing suitable supersolutions to Eq. \eqref{eq1}.
\begin{lemma}\label{lem.super}
Let $c\in\real$ be such that the trajectory $l_c$ intersects for the first time the $X$-axis at a point $(X_0,0)$ with $X_0\neq1$. Then, for any $R>0$, there exists a supersolution in the form of a traveling wave
\begin{equation*}
\overline{U}_{c,R}(x,t)=\overline{f}_c(x+ct), \quad (x,t)\in\real\times(0,\infty)
\end{equation*}
such that $0<\overline{U}_{c,R}\leq 1$ and $U_{c,R}(x,0)=1$ for any $x\in[R,\infty)$.
\end{lemma}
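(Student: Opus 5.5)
The supersolution is obtained by truncating the profile $f_c$ of the trajectory $l_c$ at level one, using the first point where $f_c$ reaches one or stops increasing. Let $(X(\xi),Y(\xi))$ parametrize $l_c$. Since $l_c$ leaves $P_1$ into the quadrant $X>0$, $Y>0$, the coordinate $X=f_c$ is strictly increasing as long as $Y>0$. Let $\xi_1$ be the first time with $Y(\xi_1)=0$, and set $X_0=X(\xi_1)\neq 1$. Two cases arise.

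\textbf{Case $X_0>1$.} By continuity there is a first $\xi_*<\xi_1$ with $X(\xi_*)=1$ and $Y(\xi_*)>0$. Define
$$
\overline{f}(\xi)=f_c(\xi) \ \text{for } \xi<\xi_*, \qquad \overline{f}(\xi)=1 \ \text{for } \xi\geq\xi_*,
$$
and set $\overline{U}_{c,R}(x,t)=\overline{f}(x+ct+\xi_*-R)$. Then $0<\overline{U}_{c,R}\le1$, and $\overline{U}_{c,R}(x,0)=1$ for $x\ge R$.

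It remains to check the weak supersolution inequality of Definition \ref{def.subsuper}. Both pieces are classical solutions on their sets: the traveling wave on one side and the constant $1$ on the other. Integrating by parts in the moving coordinate, the only extra contribution comes from the kink at $\xi=\xi_*$. Its sign is determined by the jump of the flux $u_x+ku^n$ across the interface. The convective part $ku^n$ is continuous there, since $u=1$ on both sides. The derivative jumps from $f_c'(\xi_*^-)=Y(\xi_*)>0$ to $0$, so the jump $[u_x]=-Y(\xi_*)<0$. This is a concave corner, and a minimum of two solutions is a supersolution. Concretely, for $\varphi\ge0$ the boundary term equals $-\int_0^t [u_x]\,\varphi\,ds\ge0$, which yields the reversed inequality. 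I expect this sign bookkeeping in the weak formulation to be the main technical step, so I will write it out carefully, splitting the space integral at the moving point $x=-ct-\xi_*+R$.

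\textbf{Case $X_0<1$.} Now $f_c$ increases up to $X_0<1$ on $(-\infty,\xi_1]$ with $f_c'(\xi_1)=0$, so it never reaches $1$ before turning. Gluing $f_c$ to $1$ directly would create a jump discontinuity. Instead I will glue at $\xi_1$ to the solution of the ODE \eqref{TWODE}, or simply to the constant, and raise it appropriately. The simplest attempt is to take $\overline{f}=f_c$ on $(-\infty,\xi_1]$ and extend it by the constant $X_0$. But the constant $X_0\in(0,1)$ is a strict subsolution, not a supersolution, since $X_0^p-X_0^q<0$. So this case has to be handled differently. I will argue that after $\xi_1$ the trajectory enters $Y<0$, and $X$ decreases toward $P_1$ or oscillates. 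Then I will use the backward part instead. The trajectory arriving at $X_0$ from $Y>0$ can be compared with the horizontal line of the phase plane. The alternative construction is to use the level-one truncation of a trajectory lying above $l_c$ in the phase plane: for instance the trajectory through $(X_0',0^+)$ with $X_0'$ slightly larger, traced backward until it hits the $Y$-axis. That trajectory gives a profile starting at $0$ with positive slope. A positive-slope corner at $0$ extended by $0$ has the wrong convexity for a supersolution, however.

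So in this case I expect the intended construction is the following. If $X_0<1$, the trajectory $l_c$ crosses the $X$-axis and then returns toward $P_1$ along the negative half-plane. The profile is then a positive hump. Taking the maximum of this hump with the value $1$ is impossible, so I will instead take $\overline{f}=\min\{1,\cdot\}$ of the branch of the reflected trajectory coming from $X>1$. This is where I am least sure, and I would verify it against the paper's later use of the lemma. The key verification in all cases is again the convex-corner sign computation above.
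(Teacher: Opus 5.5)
There is a gap. Your case $X_0>1$ is the paper's construction: truncate $f_c$ at the unique $\xi_*$ where $f_c=1$, extend by $1$, and translate. Your check of the weak supersolution inequality is correct and more explicit than the paper's: near $\xi_*$ you have $\overline{f}=\min\{f_c,1\}$, a concave corner with $[u_x]=-Y(\xi_*)<0$. The problem is the case $X_0<1$, which you leave unresolved with a construction you say you are unsure of. The missing idea is that this case cannot occur.

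On the segment $\{0<X<1,\ Y=0\}$ the vector field of \eqref{PSsyst} is $(0,X^q-X^p)$, and $X^q-X^p>0$ there, so trajectories can only cross that part of the $X$-axis upward. Let $\xi_1$ be the first time with $Y(\xi_1)=0$. Since $Y>0$ on $(-\infty,\xi_1)$, you must have $Y'(\xi_1)\le 0$. This contradicts $Y'(\xi_1)=X_0^q-X_0^p>0$ whenever $X_0\in(0,1)$. Moreover $X_0>0$, because $X$ increases strictly from $0$ while $Y>0$. Hence $X_0>1$ always; this is exactly the first line of the paper's proof. The hypothesis $X_0\neq1$ only excludes a direct connection to $P_2$.

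Your discussion of that case also contains two errors. First, a constant $a\in(0,1)$ satisfies $0\ge a^p-a^q$, so it is a strict supersolution, not a subsolution. The real obstruction to extending by $X_0$ is only that the extension cannot equal $1$ on $[R,\infty)$. Second, the picture of $l_c$ crossing the axis at $X_0<1$ and returning to $P_1$ through $Y<0$ is impossible, for the reason just given.

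Replace your second case by this one-line exclusion argument. Your proof is then complete and coincides with the paper's.
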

\begin{proof}
We infer from the direction of the flow of the system \eqref{PSsyst} across the axis $Y=0$ that $X_0>1$. Let $\xi_0\in\real$ be such that $(X(\xi_0),Y(\xi_0))=(X_0,0)$. Since $\lim\limits_{\xi\to-\infty}X(\xi)=0$ on the trajectory $l_c$, it follows from the monotonicity of the mapping $\xi\mapsto X(\xi)$ that there is a unique $\xi_1\in(-\infty,\xi_0)$ such that $X(\xi_1)=1$ and $Y(\xi_1)=Y_1>0$. Let $f_c$ be the profile of a traveling wave corresponding to the trajectory $l_c$ by undoing the change of variable \eqref{PSchange}. It follows that $f_c(\xi_1)=1$ and $f_c'(\xi_1)>0$. We thus extend $f_c$ by one in the interval $(\xi_1,\infty)$, that is, introduce
$$
\overline{f}_c(\xi)=\begin{cases}
                    f_c(\xi), & \mbox{if } \xi\in(-\infty,\xi_1) \\
                    1, & \mbox{if } \xi\in[\xi_1,\infty)
                  \end{cases}
$$
and employ a translation in space to define the super-solution
$$
\overline{U}_{c,R}(x,t)=\overline{f}_c(x+\xi_1-R+ct),
$$
which fulfills the claimed properties, completing the proof.
\end{proof}
We end this section with one more preparatory result describing the trajectory $l_{\overline{c}}$ corresponding to the critical velocity $\overline{c}$ defined in \eqref{crit.vel}.
\begin{lemma}\label{lem.crit}
Let $\overline{c}$ be the critical velocity defined in \eqref{crit.vel}. Then the trajectory $l_{\overline{c}}$ connects the critical points $P_1$ and $P_2$, arriving at $P_2$ tangent to the eigenvector $e_{-}(\overline{c})$ corresponding to the non-leading eigenvalue $\lambda_{-}(\overline{c})$ in \eqref{eigen.P2}.
\end{lemma}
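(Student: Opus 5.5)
The plan is to argue by continuity of the unstable trajectory $l_c$ with respect to the parameter $c$, combined with the open character of the two outcomes ``$l_c$ connects directly to $P_2$ tangent to $e_+(c)$'' and ``$l_c$ crosses the $X$-axis at some $X_0>1$''.

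\textbf{Step 1: $\overline{c}$ is finite and lies in $[-2\sqrt{p-q},kn-2\sqrt{p-q}]$.} By \cite[Proposition 6.1]{IS26} the defining set in \eqref{crit.vel} is nonempty. For $c>kn-2\sqrt{p-q}$, $P_2$ is a focus by Proposition \ref{prop.P2}, so no direct connection is possible.

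\textbf{Step 2: the set of velocities with a direct connection tangent to $e_+(c)$ is open.} Fix such a $c$ with $c<kn-2\sqrt{p-q}$, so that $P_2$ is a stable node. Every trajectory entering a small neighbourhood of $P_2$ inside $(0,\infty)^2$, other than the single one tangent to $e_-(c)$, converges to $P_2$ tangent to $e_+$. This structure persists under small perturbations of $c$. By continuous dependence of the unstable (or center) manifold $l_c$ on $c$ (Propositions \ref{prop.P1}--\ref{prop.P1q1}), for $c'$ close to $c$ the trajectory $l_{c'}$ stays in $Y>0$ up to that neighbourhood and enters it away from the $e_-$ separatrix, so it connects directly. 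The delicate point is that near $P_1$ the manifold may be a center manifold, with a different local form when $c=0$ (Proposition \ref{prop.P1c0}). I would handle this by working on a compact piece of $l_c$ starting from a fixed section $X=\delta$, and using only that the intersection point with $X=\delta$ depends continuously on $c$.

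\textbf{Step 3: the set of velocities for which $l_c$ crosses the $X$-axis transversally at some $X_0>1$ is open.} This follows from the same continuity argument on compact pieces, since a transversal crossing of $Y=0$ is stable under perturbation.

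\textbf{Step 4: conclusion at $\overline{c}$.} The trajectory $l_{\overline{c}}$ can do exactly one of three things:
\begin{enumerate}
\item connect directly tangent to $e_+(\overline{c})$;
\item connect directly tangent to $e_-(\overline{c})$;
\item cross $Y=0$, which by the flow direction along $Y=0$ (as in Lemma \ref{lem.super}) can only happen at $X_0>1$, or else reach $X=1$ with $Y>0$ and never return.
\end{enumerate}
Alternative (1) is excluded by openness (Step 2), because it would give a direct connection for some $c>\overline{c}$. When $\overline{c}=kn-2\sqrt{p-q}$, $P_2$ is a degenerate node with a single eigendirection, which I would treat separately. Alternative (3) is excluded by Step 3, because it would forbid direct connection on a neighbourhood of $\overline{c}$, contradicting the definition of the supremum. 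Also, $l_c$ cannot stay in $0<X<1$, $Y>0$ without converging to $P_2$. This uses the monotonicity of $X$, boundedness of $Y$ obtained from the bound \eqref{interm1}, and Poincar\'e--Bendixson, since $P_2$ is the only critical point available. Hence alternative (2) holds.

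\textbf{Main obstacle.} I expect the hardest part to be the rigorous continuity and openness in Steps 2--3, especially the degenerate node case $\overline{c}=kn-2\sqrt{p-q}$ and the possibility that $l_c$ touches $Y=0$ tangentially. I would rule out tangential contact by noting that on $Y=0$ we have $Y'=X^q-X^p\neq0$ for $X\neq1$, so every crossing is transversal. I would treat the degenerate node by checking that just below it the $e_\pm$ directions merge, and then arguing directly that $l_{\overline{c}}$ enters $P_2$ along the unique eigendirection, which coincides with $e_-(\overline{c})$.
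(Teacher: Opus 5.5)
Your proof is correct for the lemma as stated, but it takes a different route from the paper.

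The paper sorts velocities into three sets: $\mathcal{A}$ (direct connection tangent to $e_+(c)$), $\mathcal{C}$ (first crossing of the $X$-axis at $X_0(c)>1$), and the remainder $\mathcal{B}$. It relies on the monotonicity result of \cite[Proposition 4.1]{IS26}: $X_0(c)<\infty$ for all $c$, and $Y_{c_1}<Y_{c_2}$ whenever $c_1<c_2$. This is played against the fact that $\lambda_-(c)$ increases with $c$. Hence, if $l_{c_0}$ enters $P_2$ along $e_-(c_0)$, every $l_c$ with $c>c_0$ passes above the $e_-$-separatrix and lands in $\mathcal{C}$. From this the paper gets that $\mathcal{A}$ is open, that $\mathcal{B}$ is a single point, and that this point is $\overline c$.

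You never use the ordering in $c$. Openness of $\mathcal{A}$ (persistence of the node picture) and of $\mathcal{C}$ (transversality), combined with the definition of the supremum, force $\overline c$ into $\mathcal{B}$ directly. You also handle the degenerate node $\overline c=kn-2\sqrt{p-q}$. The paper passes over this case, although it does occur, e.g.\ at $k=k^*(n,p,q)$ under \eqref{cond.conv}, where $\overline c=0=kn-2\sqrt{p-q}$.

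The price is that your argument yields only $\overline c\in\mathcal{B}$. It does not give the global picture $\mathcal{A}=(-\infty,\overline c)$, $\mathcal{B}=\{\overline c\}$, $\mathcal{C}=(\overline c,\infty)$. Without the ordering, nothing prevents points of $\mathcal{B}$ or $\mathcal{C}$ below $\overline c$. Yet Step 1 of the proof of Theorem \ref{th.Heav} cites this lemma precisely to put every $c<\overline c$ in $\mathcal{A}$.

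Both proofs rest equally on continuous dependence of $l_c$ on $c$ near $P_1$, including the nilpotent case $c=0$. You assume this, and the paper delegates it to \cite[Lemma 7.1]{IS26}.

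One small repair is needed. The subcase ``reaches $X=1$ with $Y>0$ and never returns'' of your alternative (3) is not covered by Step 3, which only treats transversal crossings of $Y=0$. You can either quote $X_0(c)<\infty$ from \cite[Proposition 4.1]{IS26}, or replace $\mathcal{C}$ by the set of $c$ for which $l_c$ meets $\{X=1,\ Y>0\}$. That set is open since $X'=Y>0$ there, it contains every non-direct case, and it is all that Lemma \ref{lem.super} actually needs.
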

\begin{proof}
The monotone increasing character of the mapping $\xi\mapsto X(\xi)$ along any trajectory $l_c$ while $Y(\xi)>0$ allows us to parametrize $l_c$ as the graph of a function $Y=Y_c(X)$ for any $X\in[0,X_0(c)]$, where $(X_0(c),0)$ is the first point where the trajectory $l_c$ intersects the $X$-axis (which can be the critical point $P_2$ if $X_0(c)=1$ or a regular point if $X_0(c)\in(1,\infty)$). We first recall the following monotonicity result established in \cite[Proposition 4.1]{IS26}: $X_0(c)<\infty$ for any $c\in\real$ and, if $c_1$, $c_2\in\real$ are such that $c_1<c_2$, then $Y_{c_1}(X)<Y_{c_2}(X)$ for any $X\in(0,X_0(c_1))$. Consider next the following three sets:
\begin{equation*}
\begin{split}
&\mathcal{A}=\{c\in\real: l_c \ {\rm connects \ directly} \ P_1 \ {\rm to} \ P_2 \ {\rm tangent \ to} \ e_{+}(c)=(1,\lambda_+(c))\},\\
&\mathcal{C}=\{c\in\real: l_c \ {\rm crosses \ the} \ X-{\rm axis} \ {\rm at} \ (X_0(c),0), \ X_0(c)>1\},\\
&\mathcal{B}=\real\setminus(\mathcal{A}\cup\mathcal{B}).
\end{split}
\end{equation*}
It is obvious that for $c\in\mathcal{B}$ (if any), then the trajectory $l_c$ connects directly $P_1$ to $P_2$ but with tangency to the eigenvector $e_{-}(c)=(1,\lambda_{-}(c))$. It follows from Proposition \ref{prop.P2} and \cite[Proposition 4.1]{IS26} that $\mathcal{C}$ is non-empty and contains at least the interval $(kn-2\sqrt{p-q},\infty)$, while $\mathcal{C}$ is open by easy arguments related to the continuity of the family of systems \eqref{PSsyst} with respect to the velocity parameter $c$ (see for example \cite[Lemma 7.1]{IS26} for a detailed proof). We next prove that $\mathcal{A}$ is non-empty and open. We readily obtain from \eqref{eigen.P2} that $c\mapsto\lambda_{-}(c)$ is an increasing function for $c\in(-\infty,kn-2\sqrt{p-q})$ and that
$$
-\infty=\lim\limits_{c\to-\infty}\lambda_{-}(c)<\lambda_{-}(c)\leq -2\sqrt{p-q}=\lambda_{-}(kn-2\sqrt{p-q})<0.
$$
We know from \cite[Proposition 6.1]{IS26} that $(-\infty,-2\sqrt{p-q})\subseteq\mathcal{A}\cup\mathcal{B}$. Assume for contradiction that there is some $c_0\in(-\infty,-2\sqrt{p-q})$ such that the trajectory $l_{c_0}$ enters $P_2$ tangent to the eigenvector $e_{-}(c_0)=(1,\lambda_{-}(c_0))$. Then, the opposite monotone behavior of the slope of the eigenvector $e_{-}(c)$ (decreasing in absolute value with respect to $c$) and of the trajectory $l_c$ (increasing with respect to $c$) immediately gives that, for $c>c_0$, the trajectory $l_c$ does no longer enter $P_2$ directly but crosses the $X$-axis at a point $(X_1(c),0)$ with $X_1(c)>1$, which is a contradiction. We have just proved that $(-\infty,-2\sqrt{p-q})\subseteq\mathcal{A}$. Moreover, the same argument shows that $\mathcal{A}$ is an open set. It follows from the same opposite monotonicity that $\mathcal{B}$ contains exactly one element and this element is exactly $\overline{c}$ defined in \eqref{crit.vel}, as claimed.
\end{proof}
We end this section with a theoretical preparatory result related to dynamical systems that we employ several times throughout the paper.
\begin{lemma}\label{lem.flow}
Let $\Gamma$ be a curve in the $(X,Y)$ plane associated with the system \eqref{PSsyst} such that it is the graph of a function $Y=g(X)$ with $g(0)=0$, $g(1)=0$ and $g(X)>0$ for $X\in(0,1)$.

(a) Assume that the direction of the flow of the system \eqref{PSsyst} across $\Gamma$ points towards the closed region $\mathcal{D}$ limited by $\Gamma$ and the $X$-axis. Then, the unique unstable trajectory going out of $P_1$ enters the region $\mathcal{D}$ and connects directly to $P_2$.

(b) On the contrary, if the direction of the flow of the system \eqref{PSsyst} across $\Gamma$ points outside the closed region $\mathcal{D}$, then the unique unstable trajectory going out of $P_1$ goes out in the complementary region to $\mathcal{D}$.
\end{lemma}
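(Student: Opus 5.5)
The plan is a trapping-region argument in the phase plane, combined with the local description of $l_c$ near $P_1$ from Propositions \ref{prop.P1}, \ref{prop.P1c0} and \ref{prop.P1q1}. The key fact is that $l_c$ leaves $P_1$ into the open quadrant $X>0$, $Y>0$, where $X'=Y>0$. Hence, as long as $Y>0$, the trajectory can be written as a graph $Y=Y_c(X)$.

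\emph{Part (a).} Let $\mathcal{D}$ be the closed region bounded by $\Gamma$ and the segment $[0,1]\times\{0\}$. I would first show that $l_c$ lies in $\mathcal{D}$ for $X$ small. Since $g(0)=0$, this amounts to comparing the graph $Y_c(X)$ with $g(X)$ as $X\to0^+$. The inward-pointing flow condition on $\Gamma$ near $P_1$ should force $Y_c(X)\leq g(X)$ for small $X$: otherwise $l_c$ would start above $\Gamma$, and the slope comparison $dY/dX$ along $\Gamma$ against the vector field would be inconsistent with $l_c$ emanating from $P_1$. Concretely, I would argue by contradiction. Suppose $l_c$ starts outside $\mathcal{D}$. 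Consider the region between $l_c$ and $\Gamma$ near the origin. The flow enters $\mathcal{D}$ across $\Gamma$, and $l_c$ is the unique trajectory leaving $P_1$ into the positive cone (Propositions \ref{prop.P1}--\ref{prop.P1q1}). Running backward in time, trajectories starting in the thin region between $l_c$ and $\Gamma$ would then have to exit through $P_1$. This would give a second unstable trajectory out of $P_1$, contradicting uniqueness.

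Once $l_c$ is inside $\mathcal{D}$, it cannot leave through $\Gamma$, because the flow points inward there. It also cannot cross the open segment $(0,1)\times\{0\}$ downward. On $Y=0$ with $0<X<1$ we have $Y'=X^q-X^p>0$, so the flow points upward into $\mathcal{D}$. Finally, it cannot exit through $X=0$, since $X$ is increasing. Therefore $l_c$ stays forever in the compact set $\mathcal{D}$, with $X$ monotone and bounded, so $X(\xi)\to X_\infty\le 1$. The $\omega$-limit set is nonempty and compact, and by monotonicity of $X$ it lies on a vertical line. Invariance then forces it to consist of critical points, so it is $P_1$ or $P_2$. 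It cannot be $P_1$, because $X$ increases from $0$. Hence $l_c\to P_2$ with $Y>0$ throughout, which is a direct connection in the sense of Definition \ref{def.con}.

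\emph{Part (b).} The argument is symmetric. If the flow points outward across $\Gamma$, then any trajectory that is in $\mathcal{D}$ at some time was in $\mathcal{D}$ at all earlier times. Running backward, such a trajectory stays in $\mathcal{D}$ and, by the same Poincar\'e--Bendixson and monotonicity reasoning as above, its $\alpha$-limit must be $P_1$. The region near $P_1$ inside $\mathcal{D}$ would then contain unstable trajectories from $P_1$. Combined with the reversed tangency comparison at the origin, this shows that the unique trajectory $l_c$ must leave $P_1$ into the complement of $\mathcal{D}$.

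\emph{Main obstacle.} The delicate step is the behavior at the corner $P_1$, where $\Gamma$ and $l_c$ are both tangent to, or emanate from, a non-hyperbolic point. This matters especially for $c=0$ (Proposition \ref{prop.P1c0}) and on the center manifold, where the flow direction condition on $\Gamma$ degenerates. There I would lean on the uniqueness statements of the cited propositions rather than on explicit asymptotics. Note also that in part (b) $l_c$ might still reach $P_2$ via other routes, but the statement only asserts where it goes out initially.
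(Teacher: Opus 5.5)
Your part (a) is correct and is essentially the paper's argument, only localized at $P_1$. The paper closes the region between $l_c$ and $\Gamma$ globally. It uses \cite{IS26} to know that $l_c$ either reaches $P_2$ or hits the $X$-axis at some $X_1(c)>1$, and adds the segment $[1,X_1(c)]\times\{0\}$, where the flow points downward. You only need a thin region near the origin, but you should say how it is closed: cut it with the vertical segment $X=\delta<1$. Since $X'=Y>0$ there, backward trajectories move leftward. Hence the closure of $\{0<X\le\delta,\ g(X)\le Y\le Y_c(X)\}$ is compact and negatively invariant, and the monotonicity of $X$ forces every $\alpha$-limit of an interior point to be $P_1$. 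This version needs only two facts. The first is the uniqueness of the unstable trajectory of $P_1$. The second is that $l_c$ can cross $\Gamma$ only inward, so it cannot oscillate around $\Gamma$ near $P_1$, and ``starting outside $\mathcal{D}$'' means $Y_c>g$ on a whole interval $(0,\delta]$.

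Part (b), however, has a genuine gap. Your claim that a trajectory lying in $\mathcal{D}$ at some time was in $\mathcal{D}$ at all earlier times is false. On the bottom edge $(0,1)\times\{0\}$ one has $Y'=X^q-X^p>0$, so the forward flow enters $\mathcal{D}$ there and backward trajectories leave $\mathcal{D}$ through it. The outward condition only blocks backward exit through $\Gamma$.

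In fact your argument proves too much. If all of $\mathcal{D}$ were negatively invariant with every $\alpha$-limit equal to $P_1$, infinitely many trajectories would leave $P_1$ into the open quadrant. That contradicts uniqueness regardless of where $l_c$ goes, so no $\Gamma$ could ever satisfy the hypothesis of (b). Yet the paper applies (b) to the curve $Y=k(1+\epsilon)(X^q-X^n)$. The unspecified ``reversed tangency comparison'' does not repair this.

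The fix is to mirror (a):
\begin{enumerate}
\item Suppose $l_c$ leaves $P_1$ into $\mathcal{D}$. Crossings of $\Gamma$ now go only outward, so $0<Y_c(X)<g(X)$ on some $(0,\delta]$.
\item Take the region $\{0<X\le\delta,\ Y_c(X)\le Y\le g(X)\}$. Its lower boundary is $l_c$, not the $X$-axis.
\item Backward trajectories cannot cross $l_c$, cannot cross $\Gamma$ (the forward flow exits through it), and move leftward at $X=\delta$. So this region is negatively invariant.
\item The $\alpha$-limit of any interior point is therefore $P_1$, which contradicts the uniqueness of the unstable trajectory.
\end{enumerate}
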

\begin{proof}
(a) Note first that $\Gamma$ connects $P_1$ to $P_2$ and is contained in the strip $[0,1]\times[0,\infty)$. Assume for contradiction that the unique trajectory $l_c$ contained in the unstable manifold of $P_1$ (according to Propositions \ref{prop.P1}, \ref{prop.P1c0} and \ref{prop.P1q1}) goes out from $P_1$ outside the region $\mathcal{D}$. According to \cite[Propositions 3.2 and 4.1]{IS26}, the trajectory $l_c$ either connects directly to the critical point $P_2$, or crosses the $X$-axis at a point $(X_1(c),0)$ with $X_1(c)>1$. In both cases, a different closed region $\mathcal{D}_1$ limited by the trajectory $l_c$, the curve $\Gamma$ and, if the latter case is in force, the interval $I(c):=\{X\in[1,X_1(c)], Y=0\}$ of the $X$-axis, is generated.

Pick a point $(X_0,Y_0)$ situated in the interior of the region $\mathcal{D}_1$ with $X_0\in[0,1]$. Since this point is not an equilibrium one, there is a unique trajectory of the system \eqref{PSsyst} passing through it. Note that the boundary of the region $\mathcal{D}_1$ is composed by: the curve $\Gamma$ whose direction of the flow points towards the exterior of $\mathcal{D}_1$, the trajectory $l_c$ (which is a separatrix of the phase plane) and, possibly, the interval $I(c)$ defined above, on which the direction of the flow of the system \eqref{PSsyst} also points towards the negative half-plane $Y<0$ (as it can be very easily checked by the sign of $X^q-X^p$ when $X>1$). Since the region $\mathcal{D}_1$ is a compact set, an application of the Poincar\'e-Bendixon's Theorem \cite[Section 3.7]{Pe} in the opposite direction of the flow on the trajectory passing through the fixed point $(X_0,Y_0)\in\mathcal{D}_1$ gives that the only possibility for the $\alpha$-limit of this trajectory is the critical point $P_1$. However, this is a contradiction with the uniqueness of the unstable trajectory from $P_1$.

We have thus shown that the trajectory $l_c$ stemming from $P_1$ enters the region $\mathcal{D}$. Moreover, the direction of the flow of the system \eqref{PSsyst} across the interval $\{X\in[0,1], Y=0\}$ of the $X$-axis points into the interior of $\mathcal{D}$, which, together with the condition on $\Gamma$, ensures that $\mathcal{D}$ is a positively invariant region. The compactness of $\mathcal{D}$, the monotonicity of $X$ along the trajectory $l_c$ and the Poincar\'e-Bendixon's Theorem then imply that $l_c$ has to connect to the critical point $P_2$, completing the proof of part (a). The proof of part (b) follows by completely similar arguments.
\end{proof}

\section{Proof of Theorems \ref{th.Heav} and \ref{th.antiHeav}}\label{sec.Heav}

We are now in a position to employ the sub- and supersolutions constructed in Lemmas \ref{lem.sub} and \ref{lem.super} in order to prove Theorem \ref{th.Heav}. The proof combines comparison techniques in order to establish the critical velocity $\overline{c}$, together with the technique used in \cite{KPP37} in order to prove the convergence of the frontwave to the traveling wave with profile $f_{\overline{c}}$. We first need a general preparatory result.
\begin{lemma}\label{lem.monot}
Let $u_0$ be such that $0\leq u_0(x)\leq 1$ for any $x\in\real$. If $u_0$ is non-decreasing (respectively non-increasing), then the solution $u$ to the Cauchy problem for Eq. \eqref{eq1} with initial condition $u_0$ is non-decreasing (respectively non-increasing) with respect to $x\in\real$ as well, for any $t>0$.
\end{lemma}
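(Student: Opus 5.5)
The plan is to use the translation invariance of Eq. \eqref{eq1} together with the comparison principle. First I would record that the Cauchy problem with $0\leq u_0\leq 1$ is well posed. The constants $0$ and $1$ are solutions, so the comparison principle gives $0\leq u\leq 1$ for all times. On the set $[0,1]$ the nonlinearities $k(u^n)_x=knu^{n-1}u_x$ and $u^p-u^q$ are locally Lipschitz. This is true even when $q=1$, where $u^q=u$, and the exponents $n\geq2$, $p>q\geq1$ make everything smooth on $[0,1]$. Hence the standard comparison principle for bounded classical solutions holds. By Duhamel's formula the solution is classical for $t>0$, and it is continuous in $L^1_{\rm loc}$, or pointwise at continuity points, as $t\to0$.

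Next, fix $h>0$ and consider $v(x,t):=u(x+h,t)$. Since Eq. \eqref{eq1} is autonomous in $x$, $v$ is again a solution, with initial datum $v(x,0)=u_0(x+h)$. If $u_0$ is non-decreasing, then $u_0(x+h)\geq u_0(x)$ for every $x$, and both data lie in $[0,1]$. The comparison principle then gives $v\geq u$ in $\real\times(0,\infty)$, that is, $u(x+h,t)\geq u(x,t)$. Since $h>0$ is arbitrary, $u(\cdot,t)$ is non-decreasing for every $t>0$. The non-increasing case follows in the same way with $h<0$. Alternatively, one can apply the change $x\mapsto -x$, which maps Eq. \eqref{eq1} to the same equation with $k$ replaced by $-k$; the comparison argument does not use the sign of $k$.

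The only step that needs care is the comparison principle for merely bounded, possibly discontinuous data such as the Heaviside function. I would proceed as follows. Set $w=v-u$. It satisfies a linear equation
$$w_t=w_{xx}+(a(x,t)w)_x+b(x,t)w,$$
where $a$ comes from the mean value theorem applied to $u^n$, and $b$ from the mean value theorem applied to $u^p-u^q$. Both coefficients are bounded because $0\leq u,v\leq1$. To show that $w\geq0$ given $w(\cdot,0)\geq0$, I would use a weighted $L^1$ estimate on the negative part $w_-$, with weight $e^{-|x|}$, which is valid for bounded solutions on the whole line. Concretely, I would multiply by a smooth approximation of $-\mathrm{sign}(w_-)$ times $e^{-|x|}$ and then apply Gronwall's lemma. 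An alternative is the maximum principle applied to $w+\varepsilon e^{Mt}\cosh x$.

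For the convection term written as $(aw)_x=a w_x+a_xw$, one needs $a_x$ to be bounded. This holds only for $t\geq\delta>0$, by parabolic regularity. For this reason I would run the argument on $[\delta,T]$ and then let $\delta\to0$, using the continuity of the solution at $t=0$ in $L^1_{\rm loc}$. I expect this limit to be the main technical obstacle.
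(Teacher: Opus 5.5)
Your proof is correct, but it takes a different route from the paper's. The paper first treats $C^2$ data. It differentiates \eqref{eq1} in $x$ to get the semilinear equation \eqref{interm4} for $v=u_x$, which contains the quadratic term $kn(n-1)u^{n-2}v^2$. It notes that the coefficients are bounded and that $v\equiv0$ is a solution, and concludes $v\geq0$ (resp. $v\leq0$) by comparison. General monotone data are then reached by approximating $u_0$ with smooth monotone data and passing to the limit with the parabolic estimates of \cite{LSU}.

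You instead compare $u$ with its translate $u(\cdot+h,t)$, using only that \eqref{eq1} is autonomous in $x$. Your route does not need the extra regularity to differentiate the equation, a comparison principle for the auxiliary equation \eqref{interm4}, or the approximation step. Its only ingredient is the comparison principle for \eqref{eq1} with bounded, possibly discontinuous data. The paper uses that anyway, since it compares $H$ (with datum $H_0$) directly against sub- and supersolutions in the proofs of Theorems \ref{th.Heav} and \ref{th.antiHeav}. The paper's route, conversely, only ever invokes comparison for smooth data, where it is classical, and pays for this with the approximation/continuous-dependence argument.

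The obstacle you anticipate at $t=0$ disappears if you keep the convection in divergence form in the weighted $L^1$ estimate. Let $\eta_\varepsilon$ be a convex regularization of $s\mapsto s_-$, with $\eta_\varepsilon''$ supported in $[-\varepsilon,0]$ and of size $O(1/\varepsilon)$. Test with $\eta_\varepsilon'(w)e^{-|x|}$ and integrate $(aw)_x$ by parts onto the test function. The only resulting term containing $w_x$ is $a\,w\,\eta_\varepsilon''(w)\,w_x\,e^{-|x|}$. By Young's inequality it is bounded by half of the diffusive term $\eta_\varepsilon''(w)w_x^2e^{-|x|}$ plus $\tfrac12 a^2w^2\eta_\varepsilon''(w)e^{-|x|}=O(\varepsilon)$.

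Hence the Gronwall constant depends only on $\|a\|_\infty\leq kn$ and $\|b\|_\infty\leq p+q$, which hold uniformly up to $t=0$ because $0\leq u,v\leq1$. The limit $\delta\to0$ then follows from $u(\cdot,\delta)\to u_0$ in $L^1_{\rm loc}$ and dominated convergence with the integrable weight.

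It is the expansion $(aw)_x=aw_x+a_xw$ that creates the difficulty. For Heaviside-type data $\|a_x(\cdot,t)\|_\infty$ blows up like $t^{-1/2}$, so constants depending on it are not uniform in $\delta$. The variant with $w+\varepsilon e^{Mt}\cosh x$ has the further problem that it needs a pointwise lower bound on $w(\cdot,\delta)$, which $L^1_{\rm loc}$ continuity does not give.
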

\begin{proof}
The comparison principle gives $0<u(x,t)<1$ for any $(x,t)\in\real\times(0,\infty)$. Assume first that $u_0$ is two times differentiable and let $v(x,t):=u_x(x,t)$. We compute the equation solved by $v$ by differentiating Eq. \eqref{eq1} with respect to $x$ to find
\begin{equation}\label{interm4}
v_t=v_{xx}+knu^{n-1}v_x+kn(n-1)u^{n-2}v^2+(pu^{p-1}-qu^{q-1})v.
\end{equation}
Since $n\geq2$, $p>1$ and $q\geq1$, the coefficients of \eqref{interm4} are bounded. Moreover, $v\equiv0$ is a solution to \eqref{interm4}. Thus, the comparison principle for semilinear parabolic equations ensures that $v(t)\geq0$ for any $t>0$ whenever $v_0\geq0$ (respectively, $v(t)\leq0$ for any $t>0$ whenever $v_0\leq0$), completing the proof for differentiable initial conditions. The general result (removing the assumption of differentiability) follows by a standard approximation argument of $u_0$ by functions $u_{0,n}\in C^2(\real)$ preserving the same monotonicity as $u_0$, the convergence of the derivatives of the approximating solutions at any $t>0$ to the actual solution being ensured by the uniform estimates in \cite[Chapter V, Section 8]{LSU}.
\end{proof}
\begin{proof}[Proof of Theorem \ref{th.Heav}]
We divide the proof in a few steps, for the readers' convenience.

\medskip

\noindent \textbf{Step 1. Critical velocity.} In this first step, we prove \eqref{convH1}. Pick $c<\overline{c}$. We infer from Lemma \ref{lem.crit} that $c\in\mathcal{A}$ and thus the trajectory $l_c$ connects $P_1$ to $P_2$ tangent to the eigenvector $e_{+}(c)=(1,\lambda_+(c))$. It thus follows from Lemma \ref{lem.sub} that there exists a traveling wave subsolution $\underline{U}_{c,0}$ such that $0\leq\underline{U}_{c,0}\leq1$ and $\underline{U}_{c,0}(x,0)=0$ for any $x\in(-\infty,0]$. The comparison principle then gives that
\begin{equation*}
H(x,t)\geq\underline{U}_{c,0}=\underline{f}_c(x+ct), \quad (x,t)\in\real\times(0,\infty),
\end{equation*}
which also gives
\begin{equation}\label{interm2}
H(x-ct,t)\geq\underline{f}_{c}(x), \quad (x,t)\in\real\times(0,\infty).
\end{equation}
Letting next $c'\in(c,\overline{c})$, we apply \eqref{interm2} for the velocity $c'$ to deduce that
$$
H(x-ct,t)=H(x+(c'-c)t-c't,t)\geq\underline{f}_{c'}(x+(c'-c)t).
$$
Since $c'<\overline{c}$, the trajectory $l_{c'}$ also directly connects $P_1$ to $P_2$ and thus the corresponding subsolution satisfies
$$
\lim\limits_{\xi\to\infty}f_{c'}(\xi)=1.
$$
Fixing $x\in\real$, letting $t\to\infty$ and taking into account that $c'-c>0$, we get that
$$
\lim\limits_{t\to\infty}H(x-ct,t)\geq\lim\limits_{t\to\infty}\underline{f}_{c'}(x+(c'-c)t)=1,
$$
while the opposite inequality follows from the fact that $0\leq H(x,t)\leq1$ for any $(x,t)\in\real\times(0,\infty)$. The previous proof applies for any $c<\overline{c}$, and we have thus shown the first statement of \eqref{convH1}.

Let now $c>\overline{c}$. Then, Lemma \ref{lem.crit} gives that $c\in\mathcal{C}$ and thus there is a supersolution $\overline{U}_{c,0}$ such that $0\leq\overline{U}_{c,0}\leq 1$ and $\overline{U}_{c,0}(x,0)=1$ for any $x\in[0,\infty)$. The comparison principle then ensures that
$$
H(x,t)\leq\overline{U}_{c,0}(x,t)=\overline{f}_{c}(x+ct), \quad (x,t)\in\real\times(0,\infty),
$$
or, equivalently,
\begin{equation}\label{interm3}
H(x-ct,t)\leq\overline{f}_{c}(x), \quad (x,t)\in\real\times(0,\infty).
\end{equation}
Pick now $\hat{c}\in(\overline{c},c)$. By applying \eqref{interm3} for the velocity $\hat{c}$, we obtain
$$
H(x-ct,t)=H(x+(\hat{c}-c)t-\hat{c}t,t)\leq\overline{f}_{\hat{c}}(x+(\hat{c}-c)t).
$$
Taking into account that
$$
\lim\limits_{\xi\to-\infty}\overline{f}_{\hat{c}}(\xi)=0
$$
and the fact that $x+(\hat{c}-c)t\to-\infty$ for any $x$ fixed and $t\to\infty$ (since $\hat{c}<c$), we deduce that
$$
0\leq\lim\limits_{t\to\infty}H(x-ct,t)\leq\lim\limits_{t\to\infty}\overline{f}_{\hat{c}}(x+(\hat{c}-c)t)=0
$$
and thus the second convergence result in \eqref{convH1} is proved. Finally, in both cases the convergence in \eqref{convH1} is uniform for $x$ in any compact set of $\real$, this fact following from the obvious locally uniform convergence as $t\to\infty$ of the profiles $\underline{f}_{c'}(x+(c'-c)t)$, respectively $\overline{f}_{\hat{c}}(x+(\hat{c}-c)t)$.

\medskip

\noindent \textbf{Step 2. Linearization.} We are left with the proof of the precise convergence to the traveling wave denoted by $f_{\overline{c}}$, for which we repeat practically verbatim the proof in the classical paper \cite{KPP37}. To this end, the main tool is the fact that the difference of two solutions is a solution to an equation satisfying a maximum principle. Indeed, if $H(x,t)$ is the solution stemming from the initial condition $H_0(x)$, it follows that $0\leq H(t)\leq1$ for any $t>0$. Moreover, since exponents $n$, $p$, $q$ satisfy \eqref{range.exp}, the function (employing the notation in \cite{LSU})
$$
a(x,t,u,u_x)=-knu^{n-1}u_x-u^p+u^q
$$
satisfies the assumptions in \cite[Theorem 8.1, Chapter V]{LSU} and thus $H$ is a classical solution to Eq. \eqref{eq1} in $\real\times(0,\infty)$ which is uniformly bounded, together with its derivatives up to second order; in particular
\begin{equation}\label{interm5}
H_x\in L^{\infty}(\real\times(\tau,\infty)), \quad \tau>0.
\end{equation}
Defining, for some constants $C>0$ and $t_0>0$
\begin{equation}\label{interm6}
w(x,t):=H(x,t)-H(x+C,t+t_0)=H(x,t)-H_{C,t_0}(x,t),
\end{equation}
we obtain by direct calculation that
\begin{equation}\label{interm7}
\begin{split}
w_t(x,t)&=H_{xx}(x,t)-H_{xx}(x+C,t+t_0)\\&+kn[(H^{n-1}H_x)(x,t)-(H^{n-1}H_x)(x+C,t+t_0)]\\&+(H^p-H^q)(x,t)-(H^{p}-H^q)(x+C,t+t_0)\\
     & =w_{xx}+knH^{n-1}(x+C,t+t_0)w_x+A(x,t)w,
\end{split}
\end{equation}
where
$$
A(x,t):=\left[\frac{H^p-H_{C,t_0}^p}{H-H_{C,t_0}}-\frac{H^q-H_{C,t_0}^q}{H-H_{C,t_0}}+kn\frac{H^{n-1}-H_{C,t_0}^{n-1}}{H-H_{C,t_0}}H_x\right](x,t).
$$
Taking into account the bound \eqref{interm5} and that $n\geq2$, $p>1$, $q\geq1$, we deduce that $A\in L^{\infty}(\real\times(\tau,\infty))$ for any $\tau>0$, and the standard theory for linear parabolic equations ensures that the equation \eqref{interm7} satisfied by $w$ fulfills the strong maximum principle.

\medskip

\noindent \textbf{Step 3. Existence of a limit.} From the previous two steps, we have all the ingredients employed in the classical proof of the convergence in \cite{KPP37}, and the rest of the proof can be performed exactly as there. We give here a brief sketch of the proof, for the sake of completeness, avoiding the technical steps that can be done exactly as in the quoted classical reference. The main step in the proof (see \cite[Th\'eor\`eme 14]{KPP37}) is, for any $t_0>0$ fixed, to introduce the function
\begin{equation*}
H_{t_0}(x,t)=H(x+\varphi(t_0),t+t_0),
\end{equation*}
where the function $\varphi(t_0)$ is defined such that $H_{t_0}(0,0)=H(\varphi(t_0),t_0)$ has the same value for any $t_0>0$. Fixing $t\in[0,T]$ with $T>0$ arbitrary, it is proved in \cite[Th\'eor\`eme 14]{KPP37} that there exists
$$
\lim\limits_{t_0\to\infty}H_{t_0}(x,t)=\overline{H}(x,t), \quad (x,t)\in\real\times(0,T),
$$
and $\overline{H}$ is a solution to the same equation \eqref{eq1}. In order to prove the existence of the previous limit, we consider the function
$$
w_{t_0}(x,t):=H_{t_0}(x,t)-H_{t_0+T}(x,t),
$$
and we deduce from Step 2 that $w_{t_0}$ is a solution to an equation of the form \eqref{interm7}, satisfying the strong maximum principle. Moreover, it can be proved identically as in \cite[Th\'eor\`eme 13]{KPP37} (whose proof only depends on the strong maximum principle and the inverse function theorem) that, for any $\epsilon>0$, there is $T(\epsilon)>0$ sufficiently large such that $|w_{t_0}(x,0)|<\epsilon$ for $x\in\real$ and $t_0>T(\epsilon)$. Recalling that the coefficient $A(x,t)$ of the linear zero order term in \eqref{interm7} is uniformly bounded, we deduce by comparison with supersolutions depending only on the time variable, of the form
$$
\overline{w}(t)=\epsilon e^{\|A(x,t)\|_{\infty}t},
$$
that
$$
|w_{t_0}(x,t)|\leq\epsilon e^{\|A(x,t)\|_{\infty}t}, \quad (x,t)\in\real\times(0,T), \quad {\rm provided} \ t_0>T(\epsilon).
$$
Note that the convection term, which is the only difference with respect to the analysis in \cite{KPP37}, does not bring any difference at this point, since the term $knH^{n-1}(x+C,t+t_0)w_{x}$ in \eqref{interm7} vanishes when applied to the supersolution depending only on the time variable. We have thus shown that
$$
|H_{t_0}(x,t)-H_{t_0+T}(x,t)|\leq\epsilon e^{\|A(x,t)\|_{\infty}t}, \quad (x,t)\in\real\times(0,T), \quad {\rm provided} \ t_0>T(\epsilon),
$$
which implies the uniform convergence of $H_{t_0}(x,t)$ to a limit $\overline{H}(x,t)$ as $t_0\to\infty$ in $\real\times[0,T]$. The fact that $\overline{H}$ is a solution to Eq. \eqref{eq1} follows by passing to the limit as $t_0\to\infty$ in the weak formulation of Eq. \eqref{eq1}, employing the uniform convergence in order to pass to the limit in the integrals. One can show as well that the first order derivatives of $H_{t_0}$ converge to first order derivatives of $\overline{H}$ by employing the uniform boundedness of the derivatives up to second order according to \cite[Theorem 8.1, Chapter V]{LSU} and the Arzel\'a-Ascoli Theorem.

\medskip

\noindent \textbf{Step 4. Identification of the limit and end of the proof.} We are only left to identify the limit $\overline{H}$ obtained in the previous step. In order to do this, we proceed as in \cite[Th\'eor\`eme 17]{KPP37} and define the function
$$
H^{*}(x,t):=\overline{H}(x-c(t),t),
$$
where $c(t)$ is implicitly defined by the condition that $H^{*}(0,t)=\overline{H}(-c(t),t)$ is constant for any $t>0$. By differentiating with respect to $t$ and taking into account that $\overline{H}$ is a solution to Eq. \eqref{eq1}, we deduce that
$$
\frac{\partial H^*}{\partial t}(x,t)=\frac{\partial\overline{H}}{\partial t}(x-c(t),t)-c'(t)\frac{\partial\overline{H}}{\partial x}(x-c(t),t),
$$
whence $H^*$ is a solution to the equation
\begin{equation}\label{interm18}
(H^*)_t=(H^*)_{xx}+kn(H^*)^{n-1}(H^*)_x-c'(t)(H^*)_x+(H^*)^p-(H^*)^q.
\end{equation}
Taking into account the construction of $\overline{H}$ as limits of time-translations of $H(\cdot,t)$, we observe that, for any $s>0$,
\begin{equation*}
\begin{split}
H^*(x,t+s)&=\overline{H}(x-c(t+s),t+s)=\lim\limits_{\tau\to\infty}H(x-c(t+s+\tau),t+s+\tau)\\
&=\lim\limits_{\zeta\to\infty}H(x-c(t+\zeta),t+\zeta)=\overline{H}(x-c(t),t)=H^*(x,t),
\end{split}
\end{equation*}
which proves that $H^*$ is a function that does not depend on the second variable; that is, $H^*(x,t)=f^*(x)$. We infer that $(H^*)_t=0$ and \eqref{interm18} then yields that $c'(t)$ is a constant (otherwise, the right-hand side would depend on $t$ explicitly). The convergence in \eqref{convH1} entails that the only possible constant for the velocity $c'(t)$ is $\overline{c}$. We then conclude that $c(t)=\overline{c}t$ and then 
$$
\overline{H}(x-\overline{c}t,t)=f^*(x), \quad (x,t)\in\real\times(0,\infty),
$$ 
whence $\overline{H}(x,t)=f^*(x+\overline{c}t)$ is the only traveling wave solution with velocity $\overline{c}$, completing the proof.
\end{proof}
We move now to the proof of Theorem \ref{th.antiHeav}, which will be rather analogous to the proof of Theorem \ref{th.Heav}. Recall the notation $\tilde{c}=kn+2\sqrt{p-q}$.
\begin{proof}[Proof of Theorem \ref{th.antiHeav}]
Let first $c>\tilde{c}$. It follows from Proposition \ref{prop.P2} that $P_2$ is an unstable node for the system \eqref{PSsyst} with such values of the velocity $c$, and, moreover, it has been proved in \cite[Proposition 3.2]{IS26} that there is $m>0$ (depending on $c$) such that the triangular region limited by the two coordinate axis and the line $Y=m(X-1)$ is negatively invariant with respect to the trajectories of the system \eqref{PSsyst}. We infer that there exists a trajectory going out of $P_2$ and crossing the $Y$-axis at a point $(0,Y_0)$ with $-m<Y_0<0$. The traveling wave profiles $\underline{f}_{c,\xi_0}$ corresponding to this trajectory satisfy that
$$
\lim\limits_{\xi\to-\infty}\underline{f}_{c,\xi_0}(\xi)=1, \quad \underline{f}_{c,\xi_0}(\xi)>0 \ {\rm for} \ \xi\in(-\infty,\xi_0), \quad \underline{f}_{c,\xi_0}(\xi_0)=0,
$$
for some $\xi_0\in\real$. By extending this profile by setting $\underline{f}_{c,\xi_0}\equiv0$ on $(\xi_0,\infty)$, it follows that $\underline{U}(x,t)=\underline{f}_{c,\xi_0}(x+ct)$ is a subsolution to \eqref{eq1}. Picking the interface point $\xi_0\leq0$, we deduce that
$$
\widetilde{H}(x,0)=1-H_0(x)\geq\underline{U}(x,0)=\underline{f}_{c,\xi_0}(x), \quad x\in\real.
$$
The comparison principle then ensures that $\widetilde{H}(x,t)\geq\underline{U}(x,t)$ or, equivalently,
$$
\widetilde{H}(x-ct,t)\geq\underline{f}_{c,\xi_0}(x), \quad x\in\real, \quad t>0.
$$
Letting now $c'>c$, we have
$$
\widetilde{H}(x-c't,t)=\widetilde{H}(x-ct+(c-c')t,t)\geq \underline{f}_{c,\xi_0}(x+(c-c')t)\to1,
$$
as $t\to\infty$, since $c-c'<0$ and thus, for any $x$ fixed, $x+(c-c')t\to-\infty$ as $t\to\infty$. Since $c'>c>\tilde{c}$ has been chosen arbitrarily, we deduce that
\begin{equation}\label{interm15}
\lim\limits_{t\to\infty}\widetilde{H}(x-ct,t)=1, \quad c>\tilde{c},
\end{equation}
with locally uniform convergence.

Let us now pick any $c<\tilde{c}$ sufficiently close (for example, $c>kn$). In this range, $P_2$ is an unstable focus and it has been proved in \cite[Lemma 5.3 and Proposition 5.4]{IS26} that the unique trajectory contained in the stable manifold of $P_1$ comes from $P_2$ after oscillating and intersecting the $X$-axis for the last time at some point $(X_0,0)$ with $X_0>1$ before reaching $P_1$. The corresponding profiles to this trajectory have the property that $f$ oscillates infinitely many times around the constant value equal to one and decreases to zero as $\xi\to\infty$ with the behavior given in Propositions \ref{prop.P1}, \ref{prop.P1c0} or \ref{prop.P1q1}, depending on $c$ and $q$. Let $\xi_1$ be the last value of $\xi\in\real$ such that $f(\xi_1)=1$; that is, $f(\xi)\in(0,1)$ for any $\xi\in(\xi_1,\infty)$. We define the following function
$$
\overline{f}_{c,\xi_1}:=\begin{cases}
                          1, & \mbox{if } \xi\in(-\infty,\xi_1] \\
                          f(\xi), & \mbox{if } \xi>\xi_1.
                        \end{cases}
$$
It is obvious that, for any $\xi_1\in\real$, the function $\overline{U}(x,t)=\overline{f}_{c,\xi_1}(x+ct)$ is a supersolution to Eq. \eqref{eq1}. By choosing some $\xi_1>0$ (which can be chosen by a space translation), we have also the desired order at the initial time:
$$
\widetilde{H}(x,0)=1-H_0(x)\leq\overline{U}(x,0)=\overline{f}_{c,\xi_1}(x).
$$
By the comparison principle, we obtain $\widetilde{H}(x,t)\leq \overline{U}(x,t)$ or, equivalently,
$$
\widetilde{H}(x-ct,t)\leq\overline{f}_{c,\xi_1}(x), \quad x\in\real, \quad t>0.
$$
Pick now $\hat{c}<c$. It follows from the previous inequality that
$$
\widetilde{H}(x-\hat{c}t)=\widetilde{H}(x-ct+(c-\hat{c})t)\leq\overline{f}_{c,\xi_1}(x+(c-\hat{c})t)\to0,
$$
as $t\to\infty$, since $c-\hat{c}>0$ and thus, for any $x$ fixed, $x+(c-\hat{c})t\to\infty$ as $t\to\infty$. Since $\hat{c}<c<\tilde{c}$ has been chosen arbitrarily, we deduce that
\begin{equation}\label{interm16}
\lim\limits_{t\to\infty}\widetilde{H}(x-ct,t)=0, \quad c<\tilde{c},
\end{equation}
with locally uniform convergence. The convergence \eqref{convantiH1} follows from \eqref{interm15} and \eqref{interm16}. Since Lemma \ref{lem.monot} ensures that $\widetilde{H}(t)$ is decreasing for any $t>0$, the proof of the more precise convergence \eqref{convantiH2} follows by completely similar arguments as in the proof of Theorem \ref{th.Heav} and the analogous proof in \cite{KPP37}, completing the proof.
\end{proof}

We plot in Figure \ref{fig:anti} the outcome of an experiment on the evolution of the solution $\widetilde{H}(t)$ at different times, showing how it belongs to the vanishing regime.

\begin{figure}[ht!]
  \begin{center}
  \includegraphics[width=9cm,height=6cm]{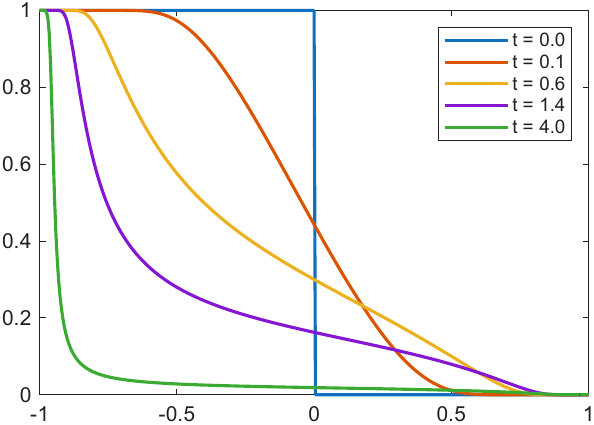}
  \end{center}
  \caption{Evolution in time of the ``anti-Heaviside" solution $\widetilde{H}(t)$. Experiment for $n=3$, $p=3$, $q=1$ and $k=0.5$}\label{fig:anti}
\end{figure}

\noindent \textbf{More general initial conditions.} As discussed in the Introduction, Theorems \ref{th.Heav} and \ref{th.antiHeav} are expressed in terms of the Heaviside-type initial conditions for the sake of simplicity. However, a direct inspection of the proofs implies that they work in the same way for the Cauchy problem associated to Eq. \eqref{eq1} with more general classes of initial conditions $u_0$, depending on the behavior of $u_0(x)$ as $x\to\pm\infty$. As an example, we state the following result.
\begin{theorem}\label{th.Heav.gen}
Let $n$, $p$, $q$, $k$ as in \eqref{range.exp} with $q>1$ and let $\overline{c}$ be the critical velocity according to \eqref{crit.vel} corresponding to these parameters. Let $\lambda_{-}(\overline{c})<0$ be the eigenvalue defined in \eqref{eigen.P2} corresponding to the velocity $\overline{c}$. Let $u_0$ be an initial condition such that $u_0(x)\in[0,1]$ for any $x\in\real$ and there exist $R_{-}<R_{+}\in\real$ and constants $C_{-}$, $C_{+}>0$ with the following properties:
\begin{equation}\label{comp.zero}
u_0(x)\leq\begin{cases}
            C_{-}e^{\overline{c}x}, & \mbox{if } \overline{c}>0,\\
            \left(\frac{|\overline{c}|}{q-1}\right)^{1/(q-1)}|x|^{-1/(q-1)}, & \mbox{if } \overline{c}<0,
          \end{cases} \quad {\rm for} \ x\in(-\infty,R_{-})
\end{equation}
and
\begin{equation}\label{comp.one}
u_0(x)\geq 1-C_{+}e^{\lambda_{-}(\overline{c})x}, \quad {\rm for} \ x\in(R_{+},\infty).
\end{equation}
Then the convergence \eqref{convH1} holds true for the solution to Eq. \eqref{eq1} with initial condition $u_0$. If, moreover, $u_0$ is nondecreasing on $\real$, then \eqref{convH2} holds true as well.
\end{theorem}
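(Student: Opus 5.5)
The proof will repeat Steps 1--4 of the proof of Theorem \ref{th.Heav}. The only genuinely new ingredient is the initial ordering between $u_0$ and the sub- and supersolutions of Lemmas \ref{lem.sub} and \ref{lem.super}. In the Heaviside case this ordering was trivial, because the subsolution vanishes on a half-line and the supersolution equals one on a half-line. For a general $u_0$, the ordering must instead be read off from the tails \eqref{comp.zero}--\eqref{comp.one}, compared with the asymptotics of the profiles given in Propositions \ref{prop.P1} and \ref{prop.P2}.

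\textbf{Supersolution side ($c>\overline{c}$).} For $c>\overline{c}$ close to $\overline{c}$, Lemma \ref{lem.crit} gives $c\in\mathcal{C}$. Lemma \ref{lem.super} then provides the truncated profile $\overline{f}_c$, equal to one on $[\xi_1,\infty)$ and equal to $f_c$ for $\xi<\xi_1$. As $\xi\to-\infty$, Proposition \ref{prop.P1} gives $f_c(\xi)\sim Le^{c\xi}$ when $c>0$. In the case $\overline{c}>0$ with $c>\overline{c}$, this tail decays faster than $C_-e^{\overline{c}x}$, which is the wrong direction for a direct comparison. The remedy is to use the supersolution with velocity $\overline{c}$ itself, or one with $c<\overline{c}$ slightly.

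I would therefore argue as follows. Take $\hat c\in(\overline{c},c)$. The main obstacle is the ordering $u_0\le \overline{f}_{\hat c}(\cdot+\text{shift})$ near $-\infty$ when $\overline{c}>0$. Since $e^{\hat c x}\ll e^{\overline{c}x}$ as $x\to-\infty$, the hypothesis $u_0\le C_-e^{\overline{c}x}$ does not suffice for this. What I would try first is to exploit the freedom in $L$: the trajectory $l_{\hat c}$ carries a one-parameter family of translates. Translating far to the left, a translate is larger than $C_-e^{\overline{c}x}$ on any prescribed half-line $(-\infty,M]$ only if $\hat c\le\overline{c}$, so this alone fails.

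As a fallback, I would compare with a solution $v$ of the linearized problem $v_t=v_{xx}+kv^{n}{}_x$ near zero, or use an intermediate time: run the equation for time $1$ and note that $\max\{u_0,\text{supersolution}\}$ is itself a supersolution. A cleaner route, which I expect the paper intends, is this. Nonnegative solutions satisfy $u_t\le u_{xx}+k(u^n)_x+u^p$, and for small $u$ the growth is controlled by $-u^q$ together with the drift. A tail $C_-e^{\overline{c}x}$ can then be absorbed by using the traveling wave $f_{\overline{c}}$ itself, which satisfies $f_{\overline{c}}\sim Le^{\overline{c}\xi}$, and adjusting $L$ by translation. After this, one applies Lemma \ref{lem.flow}-type monotonicity in $c$ to pass to $c>\overline{c}$. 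For $\overline{c}<0$ the center-manifold profile decays like $(|\overline{c}|/(q-1))^{1/(q-1)}|\xi|^{-1/(q-1)}$, matching \eqref{comp.zero} exactly up to translation. Since the profiles depend continuously on $c$, a shift in $\xi$ arranges the ordering on $(-\infty,R_-)$. On the compact middle region, $u_0\le1$ while the supersolution is bounded below by a positive constant, and a further translation makes the supersolution equal to one there.

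\textbf{Subsolution side ($c<\overline{c}$).} By Lemma \ref{lem.sub} and Corollary \ref{cor.sub}, the subsolutions vanish on a half-line and approach one like $1-Me^{\lambda_+(c)\xi}$. Since $\lambda_-(\overline{c})<\lambda_+(c)$ for $c<\overline{c}$ near $\overline{c}$, hypothesis \eqref{comp.one} gives $1-u_0\le C_+e^{\lambda_-(\overline{c})x}\ll Me^{\lambda_+(c)x}$. Hence after a translation to the right the subsolution lies below $u_0$ on $(R_+,\infty)$, and it vanishes elsewhere. Comparison then yields \eqref{convH1} exactly as in Step 1 of the proof of Theorem \ref{th.Heav}.

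\textbf{Convergence in form.} If $u_0$ is nondecreasing, Lemma \ref{lem.monot} gives monotonicity of $u(\cdot,t)$. Steps 2--4 (linearization, existence of the limit, and identification of the limit following \cite{KPP37}) then apply verbatim, with the velocity identified from \eqref{convH1}. This yields \eqref{convH2}.
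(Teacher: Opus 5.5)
Your subsolution step is correct. The $e_+(c)$-profiles of Corollary \ref{cor.sub} are the right choice, since $\lambda_+(c)>-\sqrt{p-q}\geq\lambda_-(\overline{c})$ for every $c<\overline{c}$, whereas the $e_-(c)$-profile of Lemma \ref{lem.sub} has $\lambda_-(c)<\lambda_-(\overline{c})$. Your convergence-in-form argument follows the paper.

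\textbf{The gap is in the supersolution step.} You rightly observe that for $\overline{c}>0$ the profiles of Lemma \ref{lem.super} at speed $\hat c>\overline{c}$ have thinner tails $Le^{\hat c\xi}$. None of your remedies works, however.
\begin{itemize}
\item A maximum of supersolutions is not a supersolution (only the minimum is), and $u_0$ is not one anyway.
\item $f_{\overline{c}}<1$ everywhere, because $l_{\overline{c}}$ enters $P_2$. So it cannot be truncated as in Lemma \ref{lem.super}, and it cannot lie above a datum equal to one on a half-line, which \eqref{comp.one} allows.
\item ``Monotonicity in $c$'' does not transfer anything to $c>\overline{c}$.
\end{itemize}
The same obstruction also occurs for $\overline{c}<0$, contrary to your claim. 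For $\hat c\in(\overline{c},0)$ the tail is $K_{\hat c}|\xi|^{-1/(q-1)}$ with $K_c:=(|c|/(q-1))^{1/(q-1)}$, and $K_{\hat c}<K_{\overline{c}}$. Translations do not change the leading constant of an algebraic tail, and $\hat c\geq0$ gives even thinner tails. So if $u_0$ saturates \eqref{comp.zero}, no shift yields $u_0\leq\overline{f}_{\hat c}$, and continuity in $c$ does not help. The paper's sketch simply asserts this comparison, so your suspicion is justified, but your proposal does not close the gap.

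\textbf{What is missing} is a supersolution of speed $\hat c\in(\overline{c},c)$ that carries the heavier tail, built in the phase plane. If $g'=G(g)>0$, then $g(x+\hat ct)$ is a supersolution if and only if
\[
G'(X)\leq\hat c-knX^{n-1}+\frac{X^q-X^p}{G(X)}.
\]
On $[0,X_0]$ take $G(X)=\overline{c}X$ if $\overline{c}>0$, or $G(X)=X^q/|\overline{c}|$ if $\overline{c}<0$. For $X_0$ small this inequality holds precisely because $\hat c>\overline{c}$, and the curve lies below $l_{\hat c}$.

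For $X\geq X_0$, continue along the speed-$\hat c$ trajectory through $(X_0,G(X_0))$. Since
\[
\frac{d}{dX}\bigl(Y_{\hat c}(X)-Y(X)\bigr)=(X^q-X^p)\Bigl(\frac{1}{Y_{\hat c}(X)}-\frac{1}{Y(X)}\Bigr)<0,\quad X\in(0,1),
\]
its gap to $l_{\hat c}$ only shrinks. Hence for $X_0$ small it reaches $X=1$ at positive height, as $l_{\hat c}$ does because $\hat c\in\mathcal{C}$.

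Truncating at one as in Lemma \ref{lem.super} then gives a $C^1$-glued supersolution whose left tail is exactly $Ae^{\overline{c}\xi}$, respectively $K_{\overline{c}}(\xi_*-\xi)^{-1/(q-1)}$. After a translation it dominates $u_0$, and Step 1 of the proof of Theorem \ref{th.Heav} runs unchanged.
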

\begin{proof}[Sketch of the proof]
We infer from Proposition \ref{prop.P1} that the construction of subsolutions given in Lemma \ref{lem.sub} can be performed in the same way, provided \eqref{comp.one} holds true. Indeed, the only required thing is that the initial condition $u_0$ to lie above the critical trajectory connecting $P_2$ to $P_1$ tangent to the eigenvector $e_{-}(\overline{c})$ corresponding to the eigenvalue $\lambda_{-}(\overline{c})$ of the critical point $P_2$ when the velocity is $\overline{c}$. But this trajectory satisfies
$$
\frac{Y}{X-1}\sim\lambda_{-}(\overline{c}), \quad {\rm as} \ \xi\to\infty.
$$
By undoing the change of variable \eqref{PSchange}, we deduce that the traveling wave profiles corresponding to the critical trajectory behave like
$$
f(\xi)\sim 1-Ce^{\lambda_{-}(\overline{c})\xi}, \quad {\rm as} \ \xi\to\infty,
$$
and thus the initial comparison with subsolutions constructed as in Lemma \ref{lem.sub} is readily ensured by the property \eqref{comp.one}. In a similar way, due to the tail behavior as $\xi\to-\infty$ of the traveling wave profiles given in Proposition \ref{prop.P1}, we easily deduce that the conditions \eqref{comp.zero} ensure the comparison at the initial time with supersolutions constructed exactly as in Lemma \ref{lem.super}, and the rest of the proof is totally identical to the proof of Theorem \ref{th.Heav}.
\end{proof}
For the sake of simplicity, we refrained in the statement of Theorem \ref{th.Heav.gen} to consider the cases $\overline{c}=0$ or $q=1$, where the behavior of the traveling wave profiles as $\xi\to-\infty$ is given in Propositions \ref{prop.P1c0}, respectively \ref{prop.P1q1}. The reader can easily formulate analogous theorems as Theorem \ref{th.Heav.gen} by replacing \eqref{comp.zero} by the corresponding behaviors as $\xi\to-\infty$ established in these cases, as well as a result generalizing Theorem \ref{th.antiHeav} in the same style as Theorem \ref{th.Heav.gen}.

\medskip

\noindent \textbf{Discussion. Spreading versus vanishing for more general data $u_0$.} We complete this section with a brief and informal discussion related to the conditions on initial data $u_0$ in order to produce solutions belonging to the spreading or vanishing regime. Letting $n$, $p$, $q$ and $k$ as in \eqref{range.exp} and $u_0$ such that $0\leq u_0(x)\leq1$ for $x\in\real$, an interesting problem is to describe optimal conditions under which the solution $u$ to Eq. \eqref{eq1} with $u(x,0)=u_0(x)$, $x\in\real$, satisfies either $u(x,t)\to1$ or $u(x,t)\to0$ as $t\to\infty$, locally uniformly in $x$. The answer is strongly related to the sign of the critical velocity $\overline{c}$ defined in \eqref{crit.vel}. Indeed, if $\overline{c}<0$, \emph{any} solution with initial condition such that $u_0(x)\leq f_{\overline{c}}(x)$ for $x\in(-\infty,R)$ satisfies $u(x,t)\to0$ as $t\to\infty$, independent of its behavior as $x\to\infty$. This fact follows readily by a direct comparison argument with a suitably translated supersolution based on a traveling wave $f_{\overline{c}+\delta}$ for $\delta$ small such that $\overline{c}+\delta<0$.

Much more interesting is the case $\overline{c}>0$. In this case, the monotonicity with respect to $c$ established in \cite{IS26} entails that the unique trajectory $l_0$ starting from $P_1$ in the system \eqref{PSsyst} with $c=0$ connects directly to $P_2$ in the sense of Definition \ref{def.con}. Thus, there is a stationary solution $f_0(x)$ to Eq. \eqref{eq1} such that 
$$
0\leq f_0(x)\leq 1 \ {\rm for} \ x\in\real, \quad \lim\limits_{x\to-\infty}f_0(x)=0, \quad \lim\limits_{x\to\infty}f_0(x)=1.
$$
It is then natural to expect that the separatrix between initial conditions leading to either spreading or vanishing solutions $u(x,t)$ is the behavior of the stationary solution $f_{0}(x)$ as $x\to1$: if $u_0(x)<f_0(x)<1$ for $x\in(R,\infty)$ for some $R>0$, then one expects that $u(x,t)\to0$ as $t\to\infty$, and if $f_0(x)<u_0(x)\leq1$ for $x\in(R,\infty)$ then one expects that $u(x,t)\to1$ as $t\to\infty$. We refrain to give here rigorous results (which have to take into account also the behavior of $u_0$ as $x\to-\infty$).

Precisely, the next and final section is devoted to rigorous results on the transition between vanishing and spreading of the Heaviside solution determined by the convection coefficient $k$.

\section{Transition between vanishing and spreading regimes}\label{sec.trans}

This section is dedicated to the transition from the vanishing (that is, locally uniform convergence of $u(t)$ to zero) to the spreading (that is, locally uniform convergence of $u(t)$ to one) regime, which, as discussed in the Introduction, is one of the main effects of the convection term $k(u^n)_x$ and thus also one of the most important goals of the paper. We split this section into three parts: in a first one, we give the proof of Theorem \ref{th.transition}, while in the second one we introduce several explicit trajectories of the system \eqref{PSsyst}, whose theoretical importance is that they also provide a number of better estimates than the inequality \eqref{crit.coef} on the critical coefficient $k^*(n,p,q)$. The third and final subsection is dedicated to a self-map of the system \eqref{PSsyst} (and thus of Eq. \eqref{TWODE}) for $c=0$, which has independent interest and, at the same time, provides an alternative proof to the second part of Theorem \ref{th.transition}.

\subsection{Proof of Theorem \ref{th.transition}}

As explained above, we prove here Theorem \ref{th.transition}.
\begin{proof}[Proof of Theorem \ref{th.transition}]
As it follows from Proposition \ref{prop.P2} and \eqref{crit.vel}, we have $\overline{c}\leq kn-2\sqrt{p-q}$ and thus, if
$$
k<\frac{2\sqrt{p-q}}{n},
$$
we have $\overline{c}<0$. This proves the lower bound in \eqref{crit.coef}. The rest of the proof is divided in two parts.

\medskip

\noindent \textbf{Upper estimate in \eqref{crit.coef}.} We still have to prove the upper bound in \eqref{crit.coef}, which is more involved. To this end, let us fix $c=0$ and consider the following curve in the phase plane associated to the system \eqref{PSsyst} (with $c=0$):
\begin{equation}\label{basic.curve}
Y=X-X^n,\quad X\in[0,1],
\end{equation}
which obviously connects the critical points $P_1$ and $P_2$. Note first that the curve \eqref{basic.curve} is a trajectory of the system in the particular case $p=n$, $k=1$, $q=1$, and we note that in this case the eigenvalues of the linearization of the critical point $P_2$ are given by $\lambda_+(0)=-1$, $\lambda_{-}(0)=-(n-1)$, and the slope of the curve \eqref{basic.curve} in a neighborhood of $P_2$ is given by the vector $e_{-}(0)=(1,-(n-1))$, corresponding exactly to the eigenvalue $\lambda_{-}(0)$. Lemma \ref{lem.crit} then ensures that $\overline{c}=0$ for this particular choice of the exponents and coefficients and, in particular, $k^*(n,n,1)=1$.

Fix now $p=n$ and $q\in(1,p)$. Varying $k$, the direction of the flow of the system \eqref{PSsyst} across the curve \eqref{basic.curve} (with normal vector $\overline{n}=(1-nX^{n-1},-1)$) is given by the sign of
$$
F(X)=(k-1)nX^{n-1}(X-X^n)+X-X^q.
$$
Since $X\in[0,1]$ and $q\geq1$, if $k>1$ one can on the one hand observe that $F(X)>0$ and thus the direction of the flow coincides with the direction of the vector $\overline{n}$. We infer that the closed region limited by the curve \eqref{basic.curve} and the $X$-axis, namely
\begin{equation}\label{basic.region}
\mathcal{D}=\{(X,Y)\in\real^2: X\geq0, 0\leq Y\leq X-X^n\}
\end{equation}
is positively invariant for the system \eqref{PSsyst}. We infer from Lemma \ref{lem.flow} that the unstable trajectory $l_0$ of $P_1$ connects directly to $P_2$ in the sense of Definition \ref{def.con}. On the other hand, for $k>1$ we have
\begin{equation*}
\begin{split}
\lambda_{-}(0)&=-\frac{kn+\sqrt{(kn)^2-4(n-q)}}{2}=-\frac{kn+\sqrt{(n-2)^2+(k^2-1)n^2+4(q-1)}}{2}\\&<-(n-1),
\end{split}
\end{equation*}
whence the unique trajectory tangent to the eigenvector $e_{-}(0)$ of the critical point $P_2$ reaches $P_2$ from outside the region $\mathcal{D}$. This shows that the unstable trajectory of $P_1$ reaches $P_2$ tangent to the leading eigenvector $e_{+}(0)$ and Lemma \ref{lem.crit} implies that $\overline{c}>0$. Since $k>1$ has been chosen arbitrarily, we have thus shown that
$$
\frac{2\sqrt{n-q}}{n}\leq k^*(n,n,q)\leq1.
$$

We still have to move $p$, which up to now has been fixed as $p=n$. To this end, let us recall that we have just proved that, for $k>1$, $c=0$ and $1\leq q<p=n$, there is a direct connection (in the sense of Definition \ref{def.con}) from $P_1$ to $P_2$ entering $P_2$ tangent to the eigenvector $e_{+}(0)$. We then deduce from Lemma \ref{lem.sub} and Corollary \ref{cor.sub} that the unique trajectory entering $P_2$ tangent to the eigenvector $e_{-}(0)$ and other trajectories entering $P_2$ tangent to the eigenvector $e_{+}(0)$ cross the $Y$-axis (and produce subsolutions, as explained in Lemma \ref{lem.sub}). 

Let us now pick $p\in(1,n)$ and $1\leq q<p$. Let $Y=Y_{1,n}(X)$ be a trajectory entering $P_2$ and crossing the $Y$-axis in the system \eqref{PSsyst} with $c=0$, $q=1$, $p=n$ and $k>1$ and let $Y=Y(X)$ be a trajectory in the system \eqref{PSsyst} with $c=0$, the same value of $k>1$ and given $1\leq q<p<n$ such that there is $X_0\in[0,1]$ with $Y(X_0)=Y_{1,n}(X_0)$. The inverse function theorem ensures that the derivatives of $Y(X)$ and $Y_{1,n}(X)$ satisfy
\begin{equation}\label{interm8}
Y'(X)=-knX^{n-1}+\frac{X^q-X^p}{Y}\leq-knX^{n-1}+\frac{X-X^n}{Y}=Y_{1,n}'(X), \quad X\in[0,1],
\end{equation}
and the comparison entails that the trajectory parametrized as $Y=Y(X)$ satisfies $Y(X)\geq Y_{1,n}(X)$ for $X\in[0,X_0]$. The proof of Lemma \ref{lem.sub} implies that the trajectories of the system \eqref{PSsyst} cannot have vertical asymptotes at points $X\in[0,1]$, hence the trajectory parametrized as $Y=Y(X)$ also crosses the $Y$-axis. Since, for $1<q<p<n$ we obviously have
$$
kn+\sqrt{(kn)^2-4(n-1)}<kn+\sqrt{(kn)^2-4(p-q)},
$$
it follows that the slopes $\lambda_{-}(0)$ of the eigenvectors $e_{-}(0)$ are ordered, that is, 
$$
\lambda_{-}(0;k,n,p,q)<\lambda_{-}(0;k,n,n,1)<0.
$$ 
This fact, together with the comparison deduced from \eqref{interm8}, readily prove that the trajectory entering $P_2$ tangent to the eigenvector $e_{-}(0)$ in the system \eqref{PSsyst} for $c=0$, $k>1$ and given $1\leq q<p<n$ crosses the $Y$-axis at a point $(0,Y(0))$ with $Y(0)>0$. We then get by an application of the Poincar\'e-Bendixon's Theorem in the compact region limited by the previous trajectory $Y(X)$ and the two coordinate axis that the unstable trajectory $l_0$ of the critical point $P_1$ connects directly to $P_2$ (in the sense of Definition \ref{def.con}). In particular, this shows that $\overline{c}>0$ and, since in the previous comparison argument $k>1$ is arbitrarily chosen, we deduce that $k^*(n,p,q)\leq1$ if $1\leq q<p\leq n$.

We are left with the case $p>n$. In this case, we proceed by comparison as above, but with a change. Pick $k>p/n>1$ and let $k'=kn/p>1$. Then, in the same notation and conditions as in \eqref{interm8}, we can write
\begin{equation}\label{interm9}
Y'(X)=-knX^{n-1}+\frac{X^q-X^p}{Y}=-k'pX^{p-1}+\frac{X^q-X^p}{Y},
\end{equation}
and thus compare with the system \eqref{PSsyst} with $1\leq q<p=n$ and coefficient $k'>1$ defined above. A similar argument as in the previous paragraph shows that the unstable trajectory $l_0$ of the critical point $P_1$ directly connects to $P_2$ (in the sense of Definition \ref{def.con}) as well, and thus $k^*(n,p,q)\leq p/n$. The proof of the upper bound in \eqref{crit.coef} is now complete.

\medskip

\noindent \textbf{Equality in the lower bound if \eqref{cond.conv} holds true.} Let us assume next that $2\leq n\leq(q+1)/2$. We show that we have a trajectory directly connecting $P_1$ to $P_2$ (in the sense of Definition \ref{def.con}) in the system \eqref{PSsyst} with $c=0$, whenever $kn>2\sqrt{p-q}$. To this end, fix $c=0$ and let us consider the curve and the convection coefficient
\begin{equation}\label{complex.curve}
Y=A(X^{q+1-n}-X^{p+1-n}), \quad k=\frac{A^2(p-q)+1}{An},
\end{equation}
where $A>0$ is arbitrary. Picking the normal direction to the curve \eqref{complex.curve} as
$$
\overline{n}=\left(A(q+1-n)X^{q-n}-A(p+1-n)X^{p-n},-1\right),
$$
we obtain that the direction of the flow of the system \eqref{PSsyst} (with $c=0$ and $k$ given in \eqref{complex.curve}) across the curve \eqref{complex.curve} is given by the sign of the expression obtained as the scalar product between the vector field of the system \eqref{PSsyst} and the vector $\overline{n}$, which, after direct calculations and grouping similar terms, gives
\begin{equation*}
F(X)=A^2X^q(1-X^{p-q})G(X),
\end{equation*}
with
$$
G(X):=p-q+(q+1-n)X^{q+1-2n}-(p+1-n)X^{p+1-2n}.
$$
Note that $G(0)=p-q>0$ by \eqref{range.exp}, while $G(1)=0$. Moreover,
$$
G'(X)=X^{q-2n}\left[(q+1-n)(q+1-2n)-(p+1-n)(p+1-2n)X^{p-q}\right]=X^{q-2n}H(X).
$$
It is exactly at this point where the condition \eqref{cond.conv} enters decisively into play. Indeed, since $q+1-2n\geq0$, we have that $H(0)\geq0$, while $p>q$ also implies that $p+1-n>p+1-2n>0$ and thus the function $H(X)$ is decreasing with $X$ for $X\in[0,1]$. Noting that $H(1)<0$, we infer that $H$ has a unique zero $X_0\in[0,1)$ (with $X_0=0$ exactly when $q+1=2n$). Since $G'$ and $H$ have the same sign, it follows that $X_0$ is a local maximum point for $G$ and then $G$ decreases on $(X_0,1)$. Moreover, $G(X_0)\geq G(0)=p-q>0$, and thus it is obvious that $G(X)\geq0$ for any $X\in[0,1]$ (with $X=1$ as the single zero for $G$ in $[0,1]$). This sign of $G$ (which is the same as the sign of $F$ for $X\in[0,1]$) entails that the region $\mathcal{D}$ limited by the curve \eqref{complex.curve} and the $X$-axis is positively invariant. Lemma \ref{lem.flow} (with \eqref{complex.curve} in the role of the curve $\Gamma$) ensures that the unstable trajectory from $P_1$ connects directly to $P_2$. Since $A>0$ has been chosen arbitrarily, the above analysis holds true for any $k$ as defined in \eqref{complex.curve}. Optimizing with respect to $A$ and taking into account that
$$
\inf\limits_{A>0}\frac{A^2(p-q)+1}{An}=\frac{2\sqrt{p-q}}{n},
$$
we readily find that $k^*(n,p,q)=2\sqrt{p-q}/n$, achieving thus the equality in the lower bound in \eqref{crit.coef}.
\end{proof}

\subsection{Explicit trajectories and further estimates for $k^*(n,p,q)$}\label{subsec.exp}

In this section, we give a list of explicit trajectories of the system \eqref{PSsyst} (and, equivalently, explicit traveling wave solutions to Eq. \eqref{eq1} whenever it is possible to integrate the differential equation given by the trajectory). Besides the role of examples, these explicit trajectories will play the theoretical role of either establishing an exact value for the critical (transition) coefficient $k^*(n,p,q)$ under suitable conditions between $n$, $p$ and $q$, or improving the estimates in \eqref{crit.coef}. We mention that a few of the calculations leading to the explicit trajectories below have been performed with the help of a symbolic calculation software.

$\bullet$ We first give a general explicit solution in the particular case $p=n$, $q=1$, generalizing the solution \eqref{basic.curve} employed in the proof of Theorem \ref{th.transition}. Let us stress here that this particular solution allows us to find the exact value of $\overline{c}$ when $p=n$ and $q=1$. More precisely, consider the following trajectory and velocity:
\begin{equation}\label{curve0}
Y=k(X-X^n), \quad c=\frac{k^2-1}{k}.
\end{equation}
Direct calculation of the flow of the system \eqref{PSsyst} with the previous value of $c$ show that \eqref{curve0} is a trajectory. After integration it leads to the traveling wave profiles
$$
f(\xi)=\left(1+Ce^{-k(n-1)\xi}\right)^{-1/(n-1)}, \quad C>0.
$$
Coming back to the trajectory \eqref{curve0}, we deduce by direct calculation that, for the value of $c$ given in \eqref{curve0}, the two eigenvalues of the critical point $P_2$ are given by 
$$
\lambda_{-}(c)=-k(n-1), \quad \lambda_+(c)=-\frac{1}{k}, \quad |\lambda_{-}(c)-\lambda_{+}(c)|=\frac{k^2(n-1)-1}{k}.
$$
while the slope of the trajectory \eqref{curve0} as $X\to1$ is 
$$
\frac{d}{dX}[k(X-X^n)]\Big|_{X=1}=-k(n-1)=\lambda_{-}(c).
$$
It follows from the previous equalities that the trajectory and velocity in \eqref{curve0} are critical (that is, $\overline{c}=(k^2-1)/k$) provided that $k^2(n-1)-1>0$, or equivalently $k^2>1/(n-1)$.

$\bullet$ Letting $p=2n-1$, $q=n$ and $k>\frac{2}{\sqrt{n}}$, we have the following explicit trajectories: 
\begin{equation}\label{expl2}
Y=c_i(X-X^n), \quad c_1=\frac{kn+\sqrt{k^2n^2-4n}}{2n}, \quad c_2=\frac{kn-\sqrt{k^2n^2-4n}}{2n},
\end{equation}
noting that $c_i>0$ for $i=1,2$. Similarly as in the previous case, these trajectories correspond to the explicit traveling wave profiles 
$$
f(\xi)=\left(1+Ce^{-c_i(n-1)\xi}\right)^{-1/(n-1)}, \quad C>0.
$$
Note that the slope of the trajectories \eqref{expl2} as $X\to1$ is
$$
\frac{d}{dX}[c_i(X-X^n)]\Big|_{X=1}=-c_i(n-1), \quad i=1,2,
$$
while by replacing, for the easiness of the calculations,
$$
k=\frac{c^2n+1}{cn}, \quad c\in\{c_1,c_2\},
$$
in the expressions of the eigenvalues of the critical point $P_2$ (according to \eqref{eigen.P2}), we find that the eigenvalues are equal to $-1/c_i$, respectively $-c_i(n-1)$. Thus, one of the two velocities $c_1$ or $c_2$ is critical whenever $c_i>1/\sqrt{n-1}$. Since $k>2/\sqrt{n}$, it is easy to show that $c_2<1/\sqrt{n-1}$ for any such $k$, hence the only critical velocity can be $c_1$. Equating $c_1>1/\sqrt{n-1}$, we conclude that
$$
\overline{c}=c_1=\frac{kn+\sqrt{k^2n^2-4n}}{2n} \quad {\rm for \ any} \quad k>\frac{2n-1}{n\sqrt{n-1}}.
$$

$\bullet$ Letting $c=0$, $p=2n-1$, $q=1$ and $k=(n+1)/n$, we have the following explicit trajectory:
\begin{equation}\label{curve1}
Y=X-X^n,
\end{equation}
which, after integration, leads to the traveling wave profiles
$$
f(\xi)=\left(1+Ce^{-(n-1)\xi}\right)^{-1/(n-1)}, \quad C>0.
$$
Moreover, with this choice of exponents, $\lambda_{-}(0)=1-n$ and then $e_{-}(0)=(1,1-n)$ coincides with the slope of the curve \eqref{curve1} at $X=1$, hence the trajectory \eqref{curve1} is the critical one and $\overline{c}=0$. We deduce that
$$
k^*(n,2n-1,1)=\frac{n+1}{n}
$$
and, by the same argument of comparison and monotonicity with respect to $p$ as in the first part of the proof of Theorem \ref{th.transition}, we further derive that
\begin{equation}\label{estimate1}
k^*(n,p,1)<\frac{n+1}{n}, \quad {\rm for \ any} \ p\in(1,2n-1).
\end{equation}
Note that \eqref{estimate1} improves the upper bound $p/n$ established in \eqref{crit.coef} for $p\in(n+1,2n-1)$.

$\bullet$ In this part, we prove the following result.
\begin{proposition}
Let $q\in[1,n-1]$ and $p=n$. Then,
$$
k^*(n,n,q)>\frac{2\sqrt{n-q}}{n},
$$
that is, the lower bound in \eqref{crit.coef} is strict for this choice of exponents.
\end{proposition}
\begin{proof}
For $q=1$, we have proved that $k^*(n,n,1)=1>2\sqrt{n-1}/n$ in the proof of Theorem \ref{th.transition}. Let then
$$
p=n\geq2, \quad q\in(1,n-1], \quad k=\frac{2\sqrt{n-q}}{n}, \quad c=0,
$$
and consider the curve connecting $P_1$ to $P_2$ in the phase plane associated to the system \eqref{PSsyst} (with $c=0$)
\begin{equation}\label{curve.eps}
Y=k(1+\epsilon)(X^q-X^n),
\end{equation}
with $\epsilon>0$ to be chosen later. The direction of the flow of the system \eqref{PSsyst} across the curve \eqref{curve.eps} (with normal direction $\overline{n}=(k(1+\epsilon)(qX^{q-1}-nX^{n-1}),-1)$) is given by the sign of the expression
$$
\mathcal{F}_{\epsilon}(X)=\frac{4(n-q)}{n^2}(X^q-X^n)\left[(1+\epsilon)^2qX^{q-1}-\epsilon(1+\epsilon)nX^{n-1}-\frac{n^2}{4(n-q)}\right],
$$
for $X\in[0,1]$. Denoting by
$$
F_{\epsilon}(X):=(1+\epsilon)^2qX^{q-1}-\epsilon(1+\epsilon)nX^{n-1}-\frac{n^2}{4(n-q)}, \quad X\in[0,1],
$$
we notice that, on the one hand, after easy algebraic manipulations,
$$
F_{\epsilon}(1)=-\frac{1}{4(n-q)}\left[n-2q-2\epsilon(n-q)\right]^2\leq0.
$$
On the other hand, we calculate
\begin{equation}\label{interm10}
F_{\epsilon}'(X)=(1+\epsilon)X^{q-2}\left[q(q-1)(1+\epsilon)-n(n-1)\epsilon X^{n-q}\right].
\end{equation}
Choose now
\begin{equation}\label{interm11}
\epsilon:=\frac{q(q-1)}{n(n-1)-q(q-1)},
\end{equation}
a choice which ensures that $F_{\epsilon}'(X)\geq0$ for any $X\in[0,1]$, as it follows readily from \eqref{interm10}. Thus, $F_{\epsilon}(X)\leq F_{\epsilon}(1)\leq0$ for any $X\in[0,1]$. We then infer from Lemma \ref{lem.flow} (with \eqref{curve.eps} in place of $\Gamma$) that the unstable trajectory $l_0$ goes out of the critical point $P_1$ outside the closed region $\mathcal{D}$ limited by the curve \eqref{curve.eps} and the $X$-axis. We next compare the slope of the curve \eqref{curve.eps} at $X=1$ with the slope of the eigenvector $e_{-}(0)$ corresponding to the eigenvalue $\lambda_{-}(0)$. Indeed, by the choice of $k$, we have that $\lambda_{-}(0)=-kn/2$, while
$$
Y'(X)\Big|_{X=1}=k(1+\epsilon)(q-n).
$$
Taking into account the choice of $\epsilon$ in \eqref{interm11}, we deduce that
\begin{equation}\label{interm12}
\frac{1+2\epsilon}{2+2\epsilon}n-q=\frac{(n-q)(n-q-1)}{2(n-1)}\geq0,
\end{equation}
recalling that $q\leq n-1$. The inequality \eqref{interm12} implies that $2(1+\epsilon)(n-q)>n$ and thus
$$
\lambda_{-}(0)=-\frac{kn}{2}>-k(1+\epsilon)(n-q)=Y'(X)\Big|_{X=1},
$$
which ensures that the eigenvector $e_{-}(0)=(1,\lambda_{-}(0))$ and the critical trajectory entering $P_2$ tangent to it come from the interior of the closed region $\mathcal{D}$ limited by the curve \eqref{curve.eps} and the $X$-axis. Due to the direction of the flow on the curve \eqref{curve.eps}, we have just proved that the unstable trajectory $l_0$ stemming from $P_1$ cannot connect to $P_2$ and has to intersect the $X$-axis at a point $(X_0,0)$ with $X_0>1$. This fact, together with Lemma \ref{lem.super}, complete the proof.
\end{proof}

\medskip

It is now the right time to move on towards more involved and more general explicit trajectories. To this end, let us consider in the next examples general exponents $n$ and $q$ such that $n>(q+1)/2$ (as the opposite range is already covered in Theorem \ref{th.transition}) and velocity $c=0$. The next result establishes the exact value of the transition coefficient $k^*(n,p,q)$ in two rather general cases.
\begin{proposition}
Let $n$ and $q$ be as in \eqref{range.exp} and such that $n>(q+1)/2$. Then we have
\begin{equation}\label{estimate3}
k^*(n,2n-1,q)=\frac{2n+q+1}{n\sqrt{2(q+1)}}, \quad {\rm provided} \ n>\frac{3(q+1)}{2}
\end{equation}
and
\begin{equation}\label{estimate4}
k^*\left(n,n+\frac{q-1}{2},q\right)=\frac{2}{\sqrt{2(q+1)}}, \quad {\rm provided} \ n>q+1.
\end{equation}
\end{proposition}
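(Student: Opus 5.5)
The plan is to exhibit, in each of the two cases, an explicit trajectory of the system \eqref{PSsyst} with $c=0$. This trajectory connects $P_1$ to $P_2$ directly and arrives at $P_2$ with the slope $\lambda_{-}(0)$ of the non-leading eigenvector. By Lemma \ref{lem.crit} this forces $\overline{c}=0$ for that value of $k$. Since Theorem \ref{th.transition} says $\overline{c}=0$ exactly when $k=k^*(n,p,q)$, the value of $k^*$ follows.

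The ansatz comes from the local behavior in Proposition \ref{prop.P1c0}(1). Since $n>(q+1)/2$ and $c=0$, the unstable trajectory of $P_1$ satisfies $Y\sim\sqrt{2/(q+1)}\,X^{(q+1)/2}$. Set $r:=(q+1)/2$ and $a:=\sqrt{2/(q+1)}=r^{-1/2}$, and look for trajectories of the form
$$
Y=a\left(X^{r}-X^{n}\right),\quad X\in[0,1],
$$
which is positive on $(0,1)$ and vanishes at $P_1$ and $P_2$. For $c=0$, it is a trajectory if and only if it satisfies the identity
$$
YY'(X)=-knX^{n-1}Y-X^p+X^q.
$$
Expanding, the left-hand side equals $a^2\left[rX^{q}-(r+n)X^{n+r-1}+nX^{2n-1}\right]$, using $2r-1=q$. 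The right-hand side equals $-kna\left(X^{n+r-1}-X^{2n-1}\right)-X^p+X^q$. The $X^q$ terms match because $a^2r=1$. Two choices of $p$ close the remaining balance.

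If $p=n+r-1=n+(q-1)/2$, matching the coefficients of $X^{2n-1}$ and $X^{n+r-1}$ gives $k=a=2/\sqrt{2(q+1)}$.

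If $p=2n-1$, matching the same two coefficients gives $kna=a^2(r+n)$. This yields
$$
k=\frac{(n+r)a}{n}=\frac{2n+q+1}{n\sqrt{2(q+1)}}.
$$
In both cases the remaining coefficient identity reduces to $a^2r=1$.

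Next I check the tangency at $P_2$. The slope of the curve at $X=1$ is $a(r-n)<0$. With $c=0$, the eigenvalues $\lambda_\pm(0)$ are the roots of $\lambda^2+kn\lambda+(p-q)=0$. Substituting $\lambda=a(r-n)$ and using $a^2r=1$ shows that it is a root in both cases. The other root then follows from the product of the roots.

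In the first case, $p-q=n-r$, so the other root is $-1/a$. In the second case, $p-q=2(n-r)$, so the other root is $-2/a$. The slope $a(r-n)$ is the non-leading eigenvalue $\lambda_-(0)$ exactly when $a(n-r)$ exceeds $1/a$, respectively $2/a$. Since $a^2=1/r$, this means $n-r>r$, i.e. $n>q+1$, respectively $n-r>2r$, i.e. $n>3(q+1)/2$. These are precisely the hypotheses of the statement.

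To conclude, the explicit curve is a trajectory leaving $P_1$ in the cone $X,Y>0$. By the uniqueness of the unstable or center trajectory of $P_1$, it coincides with $l_0$, and it connects directly to $P_2$ tangent to $e_-(0)$. Thus $0\in\mathcal{B}$ in the notation of the proof of Lemma \ref{lem.crit}, so $\overline{c}=0$. Theorem \ref{th.transition} then identifies $k$ as $k^*(n,p,q)$.

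The main obstacle I expect is this uniqueness step in the degenerate case $c=0$. I must ensure that the explicit curve is the trajectory $l_0$ and not another orbit tangent to the $X$-axis at $P_1$. I plan to handle it by comparing with the asymptotics in Proposition \ref{prop.P1c0}(1): near $P_1$ the curve behaves like $aX^{r}$, which is exactly the profile of the unique unstable trajectory described there.
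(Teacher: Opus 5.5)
Your proposal is correct and follows the paper's proof. It uses the same explicit curve $Y=\frac{2}{\sqrt{2(q+1)}}(X^{(q+1)/2}-X^n)$ at $c=0$ with the same two values of $k$, compares its slope at $X=1$ with $\lambda_{-}(0)$, and concludes $\overline{c}=0$ via Lemma \ref{lem.crit} and Theorem \ref{th.transition}. Your product-of-roots computation of the second eigenvalue replaces, and implies, the paper's explicit check that $kn-2\sqrt{p-q}\geq0$.

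The uniqueness issue you flag is settled directly, as the paper does implicitly, by the uniqueness of the trajectory $l_0$ leaving $P_1$ recorded in Section 2. The explicit arc lies in the open first quadrant with $X$ increasing along it, so backward in time it leaves $P_1$ and must be $l_0$; no asymptotic matching is needed.
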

\begin{proof}
Let
$$
p=2n-1, \quad k=\frac{2n+q+1}{n\sqrt{2(q+1)}}.
$$
With these choices, the following curve
\begin{equation}\label{curve3}
Y=\frac{2}{\sqrt{2(q+1)}}\left(X^{(q+1)/2}-X^n\right)
\end{equation}
is an explicit trajectory of the system \eqref{PSsyst} with $c=0$. Note first that
\begin{equation*}
\begin{split}
kn-2\sqrt{p-q}&=\frac{2n+1+q}{\sqrt{2(q+1)}}-2\sqrt{2n-1-q}\\&=\frac{1}{\sqrt{2(q+1)}}(\sqrt{2n-1-q}-\sqrt{2(q+1)})^2\geq0,
\end{split}
\end{equation*}
with equality if and only if $n=3(q+1)/2$. We next calculate the eigenvalue $\lambda_{-}(0)$ and the slope of the trajectory \eqref{curve3} as $X\to1$. We have
$$
Y'(X)\Big|_{X=1}=-\frac{2n-1-q}{\sqrt{2(q+1)}}=\begin{cases}
                                                 \lambda_{-}(0), & \mbox{if } n>\frac{3(q+1)}{2}, \\[1mm]
                                                 \lambda_{+}(0), & \mbox{if } n<\frac{3(q+1)}{2},
                                               \end{cases}
$$
which entails that the trajectory \eqref{curve3} reaches the critical point $P_2$ tangent to the eigenvector $e_{-}(0)$ and is thus critical only if $n>3(q+1)/2$. This completes the proof of \eqref{estimate3}.

\medskip

Let next
$$
p=n+\frac{q-1}{2}, \quad k=\frac{2}{\sqrt{2(q+1)}}.
$$
With these choices, the same curve \eqref{curve3} as in the previous item is an explicit trajectory of the system \eqref{PSsyst} with $c=0$. Note first that
$$
kn-2\sqrt{p-q}=\frac{2n}{\sqrt{2(q+1)}}-2\sqrt{n-\frac{q+1}{2}}=\frac{1}{\sqrt{2(q+1)}}(\sqrt{2n-1-q}-\sqrt{q+1})^2\geq0,
$$
with equality if and only if $n=q+1$. We next calculate the eigenvalue $\lambda_{-}(0)$ and the slope of the trajectory \eqref{curve3} as $X\to1$. We have
$$
Y'(X)\Big|_{X=1}=-\frac{2n-1-q}{\sqrt{2(q+1)}}=\begin{cases}
                                                 \lambda_{-}(0), & \mbox{if } n>q+1, \\[1mm]
                                                 \lambda_{+}(0), & \mbox{if } n<q+1,
                                               \end{cases}
$$
which entails that the trajectory \eqref{curve3} reaches the critical point $P_2$ tangent to the eigenvector $e_{-}(0)$ and is thus critical only if $n>q+1$. The proof of \eqref{estimate4} is now complete.
\end{proof}

\subsection{A self-map for $c=0$}

The last result of this paper is the establishment of a self-map of the system \eqref{PSsyst}; that is, a transformation between different sets of parameters $(n,p,q,k)$ of the system \eqref{PSsyst}. The idea of an existence of such a self-map comes from the fact that, as noticed in the previous sections, a number of explicit trajectories of the system appear twice (for example, the curve \eqref{curve3}, or even more obvious, the simpler curve \eqref{curve1} which is a trajectory either for $p=n$, $q=1$, $c=0$, $k=1$ or for $p=2n-1$, $q=1$, $c=0$, $k=(n+1)/n$). Let us thus fix $c=0$ and start from a different system employed in \cite[Section 9]{IS26}, namely
\begin{equation}\label{PSsyst2}
\left\{\begin{array}{ll}U'=(p-q)UV, \\V'=-knU^{(2n-1-q)/(p-q)}V-U+1-\frac{q+1}{2}V^2,\end{array}\right.
\end{equation}
where the new dependent variables $(U,V)$ and the new independent variable $\eta$ are defined as
\begin{equation}\label{change2}
U=X^{p-q}, \quad V=YX^{-(1+q)/2}, \quad \eta'(\xi)=X(\xi)^{(q-1)/q}.
\end{equation}
This system has been employed precisely in \cite[Section 9]{IS26} for the proof of Proposition \ref{prop.P1c0} in the case $n>(q+1)/2$. However, the previous change of variable and the system \eqref{PSsyst2} can be used in any range of exponents (with the only problem that, in some ranges, its vector field might not be of class $C^1$ and some theoretical results in dynamical systems cannot apply). We further introduce the rescaling
\begin{equation}\label{resc}
V=dW, \quad \eta=d\eta_1, \quad d=\sqrt{\frac{2}{q+1}}
\end{equation}
and, in variables $(U,W)$, the system \eqref{PSsyst2} is transformed into
\begin{equation}\label{PSsyst3}
\left\{\begin{array}{ll}U'=\frac{2(p-q)}{q+1}UW, \\W'=-\frac{kn\sqrt{2}}{\sqrt{1+q}}U^{(2n-1-q)/(p-q)}W-U+1-W^2,\end{array}\right.
\end{equation}
where derivatives are taken with respect to the new independent variable $\eta_1$ introduced in \eqref{resc}. In order to derive a self-map, we equate the coefficients and exponents of the system \eqref{PSsyst3} for two different sets of parameters $(n_1,p_1,q_1,k_1)$, respectively $(n_2,p_2,q_2,k_2)$; that is,
\begin{equation}\label{interm13}
\frac{2(p_1-q_1)}{q_1+1}=\frac{2(p_2-q_2)}{q_2+1}, \quad \frac{k_1n_1}{\sqrt{q_1+1}}=\frac{k_2n_2}{\sqrt{q_2+1}}, \quad
\frac{2n_1-q_1-1}{p_1-q_1}=\frac{2n_2-q_2-1}{p_2-q_2}.
\end{equation}
Solving the system \eqref{interm13}, we readily obtain the following correspondence of the parameters:
\begin{equation}\label{self-map}
p_2=(p_1+1)\frac{n_2}{n_1}-1, \quad q_2=(q_1+1)\frac{n_2}{n_1}-1, \quad k_2=k_1\sqrt{\frac{n_1}{n_2}}.
\end{equation}
By undoing the changes of variables \eqref{resc} and \eqref{change2}, we have just established that, for $c=0$, the systems \eqref{PSsyst} for the sets of parameters $(n_1,p_1,q_1,k_1)$ and $(n_2,p_2,q_2,k_2)$ related by the equations \eqref{self-map} are topologically equivalent. Note also that, if we fix $(n_1,p_1,q_1,k_1)$, then $p_2$, $q_2$ and $k_2$ are uniquely determined by the quotient $n_2/n_1$, thus $n_2$ can be chosen arbitrarily in \eqref{self-map}.

We believe that this self-map has independent interest and some more applications of it (in form of explicit curves for some ranges or better estimates for $k^*(n,p,q)$) can be found. We will use it here to give an \emph{alternative proof for the second part of Theorem \ref{th.transition}}. Pick $n_1=1$ and $q_1\geq1$. We then obtain from \eqref{self-map} that $q_2=n_2(1+q_1)-1\geq 2n_2-1$, or, equivalently, $n_2\leq(q_2+1)/2$. Conversely, if we pick $n_2$ and $q_2$ such that \eqref{cond.conv} holds true, then we can fix $n_1=1$ and $q_1=(1+q_2)/n_2-1\geq1$ such that \eqref{self-map} holds true. Thus, any system \eqref{PSsyst} with $c=0$ and $n$, $q$ satisfying \eqref{cond.conv} is topologically equivalent by the previous self-map to a sysrem \eqref{PSsyst} with $c=0$ and $n=1$. But for $n=1$ it is well-known (see \cite[Remark, p.5-6]{IS26} and \cite{IS25}) that there exists a direct connection (in the sense of Definition \ref{def.con}) from $P_1$ to $P_2$ in the phase plane, provided $k>2\sqrt{p-q}$. Coming back to the notation corresponding to the self-map, pick $k_2$ such that
\begin{equation}\label{interm14}
k_2n_2>2\sqrt{p_2-q_2}.
\end{equation}
By applying \eqref{self-map}, the latter inequality is transformed (in variables $(n_1,p_1,q_1,k_1)$ with $n_1=1$) into $k_1\sqrt{n_2}>2\sqrt{n_2(p_1-q_1)}$, or equivalently $k_1>2\sqrt{p_1-q_1}$. But in this case, we know that there exists a direct connection from $P_1$ to $P_2$, and the topological equivalence given by the self-map implies that such a direct connection also exists for the system \eqref{PSsyst} with $c=0$ and parameters $(n_2,p_2,q_2,k_2)$. Since $k_2$ has been chosen arbitrarily to satisfy \eqref{interm14}, we infer that $k^*(n_2,p_2,q_2)=2\sqrt{p_2-q_2}/n_2$, whenever \eqref{cond.conv} is fulfilled. The proof is completed by dropping the subindex from the notation.

\bigskip

\noindent \textbf{Acknowledgements} This work is partially supported by the Spanish project PID2024-160967NB-I00, funded by the Spanish Agency for Research (Ministry of Science, Innovation and Universities of Spain) and FEDER.

\bigskip

\noindent \textbf{Data availability} Our manuscript has no associated data.

\bigskip

\noindent \textbf{Conflict of interest} The authors declare that there is no conflict of interest.

\bibliographystyle{plain}

\end{document}